\newtheorem{teo}{Theorem}[section]
\newtheorem{lemma}[teo]{Lemma}
\newtheorem{prop}[teo]{Proposition}
\newtheorem{cor}[teo]{Corollary}
\newtheorem{oss}[teo]{Remark}
\newcommand{\dd}{\displaystyle}
\newcommand{\de}{\partial}
\newcommand{\mb}{\mathbb}
\newcommand{\epsi}{\varepsilon}
\newcommand{\pphi}{\varphi}
\newcommand{\ttilde}{\widetilde}
\newcommand{\convdeb}{\rightharpoonup}
\DeclareMathOperator{\esssup}{esssup}
\DeclareMathOperator{\spt}{spt}
\DeclareMathOperator{\dive}{div}
\DeclareMathOperator{\loc}{loc}
\begin{document}

\title{Remarks on the Moser-Trudinger inequality}
\author{Luca Battaglia\thanks{S.I.S.S.A/I.S.A.S, Via Bonomea 265, 34136 Trieste (Italy) - lbatta@sissa.it}\and Gabriele Mancini\thanks{S.I.S.S.A/I.S.A.S, Via Bonomea 265, 34136 Trieste (Italy) - gmancini@sissa.it\\The authors are supported by the FIRB project {\em Analysis and Beyond}, by the PRIN {\em Variational Methods and Nonlinear PDE's} and by the Mathematics Department at the University of Warwick.}}
\date{}

\maketitle

\begin{abstract}
\noindent We extend the Moser-Trudinger inequality
$$\sup_{u\in W^{1,N}_0(\Omega),\int_\Omega|\nabla u(x)|^Ndx\le1}\int_\Omega\left(e^{\alpha_N|u(x)|^\frac{N}{N-1}}-\sum_{j=0}^{N-2}\frac{\alpha_N^j|u(x)|^\frac{jN}{N-1}}{j!}\right)dx<+\infty$$
to any Euclidean domain satisfying Poincaré's inequality
$$\lambda_1(\Omega):=\inf_{0\not\equiv u\in W^{1,N}_0(\Omega)}\frac{\int_\Omega|\nabla u(x)|^Ndx}{\int_\Omega|u(x)|^Ndx}>0.$$
We find out that the same equivalence does not hold in general for conformal metrics on the unit ball, showing counterexamples.\\
We also study the existence of extremals for the Moser-Trudinger inequalities for unbounded domains, proving it for the infinite planar strips $\dd{\Omega:=\mb R\times(-1,1)}$.
\end{abstract}

\section*{Introduction}
This paper is concerned with extensions of the Moser-Trudinger inequality.\\
Trudinger \cite{trudi} discovered that for any bounded open domain $\dd{\Omega\subset\mb R^N}$ one has
\begin{equation}
\label{eq:mosertru}
\sup_{u\in W^{1,N}_0(\Omega),\int_\Omega|\nabla u(x)|^Ndx\le1}\int_\Omega e^{\alpha|u(x)|^\frac{N}{N-1}}dx<+\infty
\end{equation}
for some exponent $\dd{\alpha>0}$; Moser \cite{moser} then showed that $\dd{\eqref{eq:mosertru}}$ is true if and only if
$$\alpha\le\alpha_N:=N\omega_{N-1}^\frac{1}{N-1}$$
where
$$\omega_{N-1}=\frac{N\pi^\frac{N}2}{\Gamma\left(\frac{N}2+1\right)}$$
is the $\dd{N-1}$-dimensional measure of the unit sphere $\dd{\mb S^{N-1}}$.\\
This work is focused on extensions of the inequality to infinite-measure domains. Clearly, in this case the integral in $\dd{\eqref{eq:mosertru}}$ is infinite, since the integrand is greater than $\dd{1}$; moreover one should remove the first terms of the power series expansion of $\dd{e^{\alpha_N|u|^\frac{N}{N-1}}}$, that in general are not summable in $\dd{W^{1,N}_0(\Omega)}$. Precisely, we will consider the integral of
\begin{equation}
\label{eq:phi}
\Phi(u):=e^{\alpha_N|u|^\frac{N}{N-1}}-\sum_{j=0}^{N-2}\frac{\alpha_N^j|u|^\frac{jN}{N-1}}{j!}.
\end{equation}
We will say that the Moser-Trudinger inequality holds for an unbounded domain $\dd{\Omega\subset\mb R^N}$ if
\begin{equation}
\label{eq:mosertr2}
\sup_{u\in W^{1,N}_0(\Omega),\int_\Omega|\nabla u(x)|^Ndx\le1}\int_\Omega\Phi(u(x))dx<+\infty.
\end{equation}
More generally, considering an $\dd{N}$-dimensional Riemannian manifold $\dd{(M,g)}$, we will say that the Moser-Trudinger inequality holds if
$$\sup_{u\in W^{1,N}_0(M,g),\int_M|\nabla_gu(x)|^NdV_g(x)\le1}\int_M\Phi(u(x))dV_g(x)<+\infty.$$
Since the first power in the series expansion of $\dd{\Phi(u)}$ is $\dd{\frac{\alpha_N^{N-1}}{(N-1)!}|u|^N}$, the Moser-Trudinger inequality on $\dd{\Omega}$ implies Poincaré's inequality
\begin{equation}
\label{eq:poincare}
\lambda_1(\Omega):=\inf_{0\not\equiv u\in W^{1,N}_0(\Omega)}\frac{\int_\Omega|\nabla u(x)|^Ndx}{\int_\Omega|u(x)|^Ndx}>0.
\end{equation}
Gi. Mancini and Sandeep \cite{mansand} studied the problem on the unit disc $\dd{M=B_1(0)\subset\mb R^2}$ endowed with a conformal metric $\dd{g_\rho=\rho g_e}$, and they found that the Moser-Trudinger inequality holds for $\dd{(M,g_\rho)}$ if and only if $\dd{g_\rho}$ is bounded by the hyperbolic metric, namely
$$\rho(x)\le\frac{C}{\left(1-|x|^2\right)^2}\quad\forall\;x\in M\quad\text{for some }C>0.$$
A particular case is given, through the Riemann map, by Euclidean simply connected domains: in this case, they showed that the Moser-Trudinger inequality is actually equivalent to Poincaré's inequality $\dd{\eqref{eq:poincare}}$.\\
In this paper we prove that the same equivalence holds for any Euclidean domain $\dd{\Omega\subset\mb R^N}$, even for $\dd{N\ge3}$ (Theorem $\dd{\ref{teo:gabri}}$).\\
We will also investigate whether the equivalence between the Moser-Trudinger and the Poincaré inequality even holds for conformal metrics on the unit ball $\dd{B_1(0)\subset\mb R^N}$, and we discover that this does not occur.\\
We build counterexamples of conformal metrics such that Poincaré's inequality holds but Moser-Trudinger's does not. In these examples we show that the highest exponent for exponential integrability on non-compact manifolds may be any number smaller than $\dd{\alpha_N}$; moreover, exponential integrability might not occur at all, and even summability of higher powers fails for suitable metrics (Theorems $\dd{\ref{teo:funodue}}$ and $\dd{\ref{teo:esisteg}}$).\\
In the last part of this paper, we study the existence of extremal functions for the Moser-Trudinger inequality: Carleson-Chang \cite{carchang}, Flucher \cite{flucher} and K.-C. Lin \cite{kclin} solved this problem for any Euclidean bounded domain, but nothing had been done, up to our knowledge, for unbounded domains yet.\\
Here we give the first existence result for unbounded domains, precisely for the strip $\dd{\Omega=\mb R\times(-1,1)\subset\mb R^2}$ (Theorem $\dd{\ref{teo:striscia}}$).\\
When passing from bounded to unbounded domains, the main difficulty is that P.-L. Lions' concentration-compactness principle \cite{lions} is no longer true in its original form: non-compact sequences may ``vanish'' at infinity, besides ``concentrating''; however, the symmetry of $\dd{\Omega}$ with the respect to both axes allows to exclude both vanishing and concentration whereas the Riemann map between $\dd{\Omega}$ and the unit disc allows to exclude concentration, as for bounded domains.\\
Section one is about the Moser-Trudinger inequality on unbounded euclidean domains, section two concerns the Moser-Trudinger inequality on the unit ball endowed with a conformal metric, and finally in section three we study the existence of extremals on the strip.

\section{The Moser-Trudinger inequality on unbounded Euclidean domains}
In this section, we investigate the Euclidean domains where the Moser-Trudinger inequality holds; as mentioned before, the first result concerning unbounded domains was given by Gi. Mancini and Sandeep \cite{mansand}:
\begin{teo}
\label{teo:mansand}$\dd{}$\\
Let $\dd{\Omega\subset\mb R^2}$ be a simply connected open domain.\\
Then, the following conditions are equivalent:
\begin{enumerate}
\item The Moser-Trudinger inequality holds for $\dd{\Omega}$, that is
$$\sup_{u\in H^1_0(\Omega),\int_\Omega|\nabla u(x)|^2dx\le1}\int_\Omega\left(e^{4\pi u(x)^2}-1\right)dx<+\infty.$$
\item Poincaré's inequality holds for $\dd{\Omega}$, that is 
$$\lambda_1(\Omega)=\inf_{0\not\equiv u\in H^1_0(\Omega)}\frac{\int_\Omega|\nabla u(x)|^2dx}{\int_\Omega u(x)^2dx}>0.$$
\item The inradius of $\dd{\Omega}$
$$r(\Omega):=\sup\{R>0\;:\;\exists\;x\in\Omega\text{ such that }B_R(x)\subset\Omega\}$$
is finite.
\end{enumerate}
\end{teo}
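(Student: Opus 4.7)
The plan is to establish the cycle $(1)\Rightarrow(2)\Rightarrow(3)\Rightarrow(1)$. The implication $(1)\Rightarrow(2)$ is immediate from the pointwise bound $e^{4\pi t^2}-1\ge 4\pi t^2$: assumption (1) forces $\int_\Omega u(x)^2\,dx\le C/(4\pi)$ for every admissible $u$, and homogeneity upgrades this to $\lambda_1(\Omega)\ge 4\pi/C>0$. For $(2)\Rightarrow(3)$ I would argue by contrapositive: if $r(\Omega)=+\infty$, then $\Omega$ contains balls $B_R(x_R)$ with $R\to+\infty$, and the monotonicity of $\lambda_1$ under inclusion, combined with the scaling $\lambda_1(B_R)=\lambda_1(B_1)/R^2$, forces $\lambda_1(\Omega)=0$.

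The substantive step is $(3)\Rightarrow(1)$, where simple connectedness enters crucially through a Riemann map $\psi\colon B_1(0)\to\Omega$. Setting $v:=u\circ\psi\in H^1_0(B_1(0))$, the two-dimensional conformal invariance of the Dirichlet energy gives
$$\int_{B_1(0)}|\nabla v(y)|^2\,dy=\int_\Omega|\nabla u(x)|^2\,dx,$$
while the Koebe $1/4$ theorem bounds the derivative of $\psi$ by the distance to the boundary:
$$|\psi'(y)|\le\frac{4\,d(\psi(y),\partial\Omega)}{1-|y|^2}\le\frac{4\,r(\Omega)}{1-|y|^2}.$$
Changing variables via $x=\psi(y)$, with $dx=|\psi'(y)|^2\,dy$, then yields
$$\int_\Omega\left(e^{4\pi u(x)^2}-1\right)dx\le 16\,r(\Omega)^2\int_{B_1(0)}\frac{e^{4\pi v(y)^2}-1}{(1-|y|^2)^2}\,dy,$$
and the right-hand side is exactly the Moser-Trudinger functional on the unit disc endowed with a rescaling of the hyperbolic metric. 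Its uniform boundedness for $v$ with $\int|\nabla v|^2\le 1$ is precisely the hyperbolic Moser-Trudinger inequality of Mancini and Sandeep \cite{mansand}, which I would invoke as a black box.

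The main obstacle is the hyperbolic estimate imported from \cite{mansand}; without it one would be forced to redo a full concentration-compactness analysis on the disc in the presence of the singular conformal factor $(1-|y|^2)^{-2}$. Both ingredients used above, the conformal invariance of the Dirichlet integral and the Koebe distortion bound, are genuinely two-dimensional and genuinely require simple connectedness, so the scheme as written does not extend to $N\ge 3$ or to multiply connected planar domains. These are precisely the limitations that the next sections of the paper set out to remove (for arbitrary Euclidean domains in any dimension, via a different argument) or to show are sharp (via the conformal-metric counterexamples on $B_1(0)$).
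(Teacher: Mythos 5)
The paper does not actually prove Theorem \ref{teo:mansand}: it is stated as a known result due to Mancini and Sandeep \cite{mansand}, so there is no internal proof for me to compare against. Your reconstruction is correct and follows the original argument. The cheap implications $(1)\Rightarrow(2)$ (from $e^{4\pi t^2}-1\ge 4\pi t^2$ plus the homogeneity scaling) and $(2)\Rightarrow(3)$ (domain monotonicity of $\lambda_1$ together with $\lambda_1(B_R)=\lambda_1(B_1)/R^2$) are handled exactly as one should. For $(3)\Rightarrow(1)$ you correctly combine the Riemann map, the two-dimensional conformal invariance of the Dirichlet energy, and the Koebe bound $|\psi'(y)|\le 4\,d(\psi(y),\partial\Omega)/(1-|y|^2)\le 4r(\Omega)/(1-|y|^2)$ so as to reduce the question to the hyperbolic Moser--Trudinger inequality on $B_1(0)$; that is Theorem \ref{teo:conformi} in the $N=2$ case, and since it is established independently in \cite{mansand}, invoking it is not circular. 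Your closing observations on why this scheme is intrinsically planar and requires simple connectedness correctly anticipate the motivation for Theorem \ref{teo:gabri} and the conformal-metric counterexamples of Section~2.
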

We stress that removing the hypothesis of simple connectedness, the finiteness of inradius is weaker than the other two statements; a counterexemple satisfying $\dd{r(\Omega)<+\infty}$ and $\dd{\lambda_1(\Omega)=0}$ is shown indeed in \cite{mansand}.\\
The equivalence between Moser-Trudinger and Poincaré's inequalities, instead, will be now extended to any Euclidean domain.
\begin{teo}
\label{teo:gabri}$\dd{}$\\
Let $\dd{\Omega\subset\mb R^N}$ be an open domain and let $\dd{\Phi}$ be as in $\dd{\eqref{eq:phi}}$.\\
Then, the Moser-Trudinger inequality holds for $\dd{\Omega}$ if and only if Poincaré's inequality $\dd{\eqref{eq:poincare}}$ does.
\end{teo}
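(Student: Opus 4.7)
The ``only if'' direction is immediate: since the power series of $\Phi(u)$ begins with the term $\frac{\alpha_N^{N-1}}{(N-1)!}|u|^N$, any uniform bound on $\int_\Omega\Phi(u)\,dx$ for $u\in W^{1,N}_0(\Omega)$ with $\|\nabla u\|_N\le 1$ forces uniform control on $\|u\|_N$, which is exactly \eqref{eq:poincare}.

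For the converse, I would fix $u\in W^{1,N}_0(\Omega)$ with $\|\nabla u\|_N\le 1$ and, after replacing $u$ by $|u|$, assume $u\ge 0$. The first move is to split the integral at level $1$:
\[
\int_\Omega\Phi(u)\,dx=\int_{\{u\le 1\}}\Phi(u)\,dx+\int_{\{u>1\}}\Phi(u)\,dx.
\]
On $\{u\le 1\}$, every monomial $u^{jN/(N-1)}$ in the series of $\Phi$ with $j\ge N-1$ is dominated by $u^N$, so $\Phi(u)\le C_N u^N$ there and Poincar\'e yields a bound of order $C_N/\lambda_1(\Omega)$. For the second piece, set $A:=\{u>1\}$; Chebyshev combined with Poincar\'e gives $|A|\le\|u\|_N^N\le 1/\lambda_1(\Omega)<+\infty$, so the integration is over a set of finite measure controlled only by $\lambda_1(\Omega)$, and I would apply Moser's original bounded-domain inequality to $v:=(u-1)_+\in W^{1,N}_0(A)$ (extended by zero outside $A$), which satisfies $\|\nabla v\|_N\le 1$: after a Schwarz symmetrization to a ball of measure $|A|$ one obtains $\int_A e^{\alpha_N v^{N/(N-1)}}\,dx\le C(\lambda_1(\Omega))$.

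The main obstacle is transferring this bound on $v$ to one on $\Phi(u)=\Phi(v+1)$: the difference $(v+1)^{N/(N-1)}-v^{N/(N-1)}$ grows like $v^{1/(N-1)}$, unbounded in $v$, and every naive inequality $(v+1)^{N/(N-1)}\le(1+\varepsilon)v^{N/(N-1)}+C_\varepsilon$ inflates the effective exponent to $(1+\varepsilon)\alpha_N>\alpha_N$, above the Moser threshold on $A$. This is the technical core of the theorem and the step I expect to be hardest.

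To bypass the obstruction, my plan is to iterate the truncation at arbitrarily small thresholds: for each $\tau\in(0,1)$ the function $(u-\tau)_+$ lies in $W^{1,N}_0(\{u>\tau\})$ with $\|\nabla(u-\tau)_+\|_N\le 1$, the level set $\{u>\tau\}$ has measure at most $1/(\tau^N\lambda_1(\Omega))$, and Moser combined with Chebyshev gives
\[
|\{u>\tau+s\}|\le\frac{C}{\tau^N\lambda_1(\Omega)}\,e^{-\alpha_N s^{N/(N-1)}}\qquad(s>0).
\]
Optimizing $\tau$ as a slowly decaying function of the level $t=\tau+s\to+\infty$ should yield a distribution estimate $|\{u>t\}|\le C(\lambda_1)\,e^{-\alpha_N t^{N/(N-1)}(1-o_t(1))}$ sharp enough to bound $\int_{\{u>1\}}\Phi(u)\,dx$, either via the layer-cake formula $\int\Phi(u)\,dx=\int_0^{+\infty}\Phi'(t)|\{u>t\}|\,dt$ or by summing the series $\sum_j\frac{\alpha_N^j}{j!}\|u\|_{jN/(N-1)}^{jN/(N-1)}$ after deriving the Moser-type growth $\|u\|_p\le Cp^{(N-1)/N}$ with constants depending only on $N$ and $\lambda_1(\Omega)$.
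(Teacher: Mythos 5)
Your ``only if'' direction, the reduction to $u\ge0$, and the diagnosis of the difficulty -- that a level-cut at $\tau$ produces a remainder $u(1)^{\frac{1}{N-1}}$-type term that inflates the effective exponent past $\alpha_N$ -- are all on point. The gap is in the proposed resolution via iterated truncation and a distribution estimate. Optimizing your bound $|\{u>\tau+s\}|\le\frac{C}{\tau^N\lambda_1}e^{-\alpha_Ns^{N/(N-1)}}$ over $\tau=\tau(t)$ gives, at best, $\tau\sim t^{-1/(N-1)}$ and
\[
|\{u>t\}|\ \le\ C\,t^{\frac{N}{N-1}}\,e^{-\alpha_N t^{\frac{N}{N-1}}},
\]
and feeding this into the layer-cake representation $\int_{\{u>1\}}\Phi(u)\,dx=\Phi(1)|\{u>1\}|+\int_1^{\infty}\Phi'(t)|\{u>t\}|\,dt$, with $\Phi'(t)\sim\frac{N}{N-1}\alpha_Nt^{\frac{1}{N-1}}e^{\alpha_Nt^{N/(N-1)}}$, leaves the divergent integral $\int_1^\infty t^{\frac{N+1}{N-1}}\,dt$. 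Worse, the strategy is obstructed in principle: the sharp \emph{uniform} distribution bound for the class $\{\,\|\nabla u\|_N\le1,\ \|u\|_N\le\lambda_1^{-1/N}\,\}$ is $\sup_u|\{u>t\}|\asymp e^{-\alpha_Nt^{N/(N-1)}}$ (saturated by Moser bubbles centered so that the truncation level equals $t$), and even this cleaner bound, without the polynomial prefactor, makes the layer-cake integral diverge because $\Phi'(t)e^{-\alpha_Nt^{N/(N-1)}}\sim t^{1/(N-1)}$ is not integrable. A pointwise-in-$t$ bound on $|\{u>t\}|$ simply cannot encode the Moser--Trudinger inequality, which is an integral statement; the same phenomenon already occurs on the unit ball. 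The series-summation variant with $\|u\|_p\le Cp^{(N-1)/N}$ hits the identical wall: the critical radius of convergence lands exactly on the boundary, and the polynomial corrections tip it into divergence.

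What the paper does instead, and what your scheme is missing, is an $u$-adapted balance between the cut level and the \emph{residual gradient energy}. After Schwarz symmetrization to radial nonincreasing $u^*$ on $\mathbb R^N$, the cut is geometric (at $|x|=1$) rather than at a fixed level; writing $\epsi$ so that $(1+\epsi)^{N-1}\int_{B_1}|\nabla u^*|^N=1$, the inflated exponent $(1+\epsi)\alpha_N$ is applied to $w=(1+\epsi)^{(N-1)/N}(u^*-u^*(1))_+$, which by construction has unit Dirichlet energy on $B_1$. The leftover term to control is $u^*(1)^{N/(N-1)}/\epsi^{1/(N-1)}$, and the crucial input is Lemma \ref{lemma:gabri} (built from the radial lemma and a truncated P\'olya--Szeg\H o inequality, Lemma \ref{lemma:polya}): it gives
\[
u^*(1)^N\ \le\ \frac{N}{\omega_{N-1}\lambda_1(\Omega)}\Bigl(1-\int_{B_1(0)}|\nabla u^*(x)|^Ndx\Bigr),
\]
so that when $\epsi$ is forced to be small (high interior gradient energy), the boundary trace $u^*(1)$ is forced small at a matching rate, and the ratio is bounded via the elementary boundedness of $t\mapsto(1-t)/(1-t^{1/(N-1)})$ on $[0,1)$. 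Your fixed-level cut gives only $\|\nabla(u-\tau)_+\|_N\le1$ with no improvement as $\tau$ grows, so no such balance is available; this is exactly the missing idea.
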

An essential tool in the proof of Theorem $\dd{\ref{teo:gabri}}$ is the Schwarz symmetrization.\\
The Schwarz symmetrization of a non-negative $\dd{u\in W^{1,N}_0(\Omega)}$ is a non-negative radially nonincreasing $\dd{u^*\in W^{1,N}_0\left(\mb R^N\right)}$ that is equidistributed with $\dd{u}$, namely
$$|\{u>t\}|=\left|\left\{u^*>t\right\}\right|\quad\text{for any }t\ge0,$$
hence (see for instance \cite{kesavan}) it holds
$$\int_{\mb R^N}f\left(u^*(x)\right)dx=\int_\Omega f(u(x))dx\quad\text{for any Borel }f:\mb R\to\mb R$$
and in particular for $\dd{f(u)=|u|^p}$ and $\dd{f=\Phi}$, so for any function $\dd{u\in W^{1,N}_0(\Omega)}$, the Moser-Trudinger functional has the same value in $\dd{u}$ and $\dd{|u|^*}$.
Moreover, due to P\'olya-Szeg\H o inequality
$$\int_{\mb R^N}|\nabla u^*(x)|^Ndx\le\int_\Omega|\nabla u(x)|^Ndx$$
the condition on the Dirichlet integral in $\dd{\eqref{eq:mosertr2}}$ also holds for $\dd{|u|^*}$ if it does for $\dd{u}$; so, the Moser-Trudinger inequality can be proved without loss of generality just on Schwarz-symmetrized functions.\\
For the definition of the Schwarz symmetrization and a detailed list of its properties see for instance \cite{baern}.
\begin{oss}$\dd{}$\\
Any admissible $\dd{u}$ for the supremum in Theorem $\dd{\ref{teo:gabri}}$ can be seen as a function in $\dd{W^{1,N}_0\left(\mb R^N\right)}$ satisfying
\begin{equation}
\label{eq:tanaka}
\int_{\mb R^N}|\nabla u(x)|^Ndx\le 1\quad\text{and}\quad\int_{\mb R^N}|u(x)|^Ndx\le C
\end{equation}
with
$$C=\frac{1}{\lambda_1(\Omega)}$$
however, if one extends the set of admissible functions to any $\dd{u\in W^{1,N}_0\left(\mb R^N\right)}$ satisfying $\dd{\eqref{eq:tanaka}}$, the supremum is infinite for any $\dd{C>0}$, even if one considers just radially nonincreasing functions: taking a sequence of Moser-like functions
$$u_k(x)=\frac{1}{\omega_{N-1}^\frac{1}N}\left\{\begin{array}{ll}k^{N-1}&\text{if }|x|\le ke^{-k^N}\\\frac{\log\frac{k}{|x|}}k&\text{if }ke^{-k^N}<|x|\le k\\0&\text{if }|x|>k\end{array}\right.$$
one has
$$\int_{\mb R^N}|\nabla u_k(x)|^Ndx=\omega_{N-1}\int_{ke^{-k^N}}^k\left(\frac{1}{\omega_{N-1}^\frac{1}N}\frac{1}{k\rho}\right)^N\rho^{N-1}d\rho=\frac{1}{k^N}\int_{ke^{-k^N}}^k\frac{d\rho}\rho=1$$
whereas
$$\int_{\mb R^N}|u_k(x)|^Ndx\le\int_{B_k(0)}\left(\frac{1}{\omega_{N-1}^\frac{1}N}\frac{\log\frac{k}{|x|}}k\right)^Ndx=\frac{1}{k^N}\int_0^k\left(\log\frac{k}\rho\right)^N\rho^{N-1}d\rho=$$
$$=\int_0^{+\infty}t^Ne^{-Nt}dt=\frac{N!}{N^{N+1}}$$
and
$$\int_{\mb R^N}\Phi(u_k(x))dx\ge\int_{B_{ke^{-k^N}}(0)}\left(e^{Nk^N}-\sum_{j=0}^{N-2}\frac{N^jk^{jN}}{j!}\right)dx=$$
$$=k^Ne^{-Nk^N}\left(e^{Nk^N}-\sum_{j=0}^{N-2}\frac{N^jk^{jN}}{j!}\right)dx\underset{k\to+\infty}\to+\infty.$$
Thus, setting $\dd{v_k(x)=u_k\left(\left(\frac{N!}{N^{N+1}C}\right)^\frac{1}Nx\right)}$, the new functions verify
$$\int_{\mb R^N}|\nabla v_k(x)|^Ndx=\int_{\mb R^N}|\nabla u_k(y)|^Ndy=1,$$
$$\int_{\mb R^N}|v_k(x)|^Ndx=\frac{N^{N+1}C}{N!}\int_{\mb R^N}|u_k(y)|^Ndy\le C$$
and
$$\int_{\mb R^N}\Phi(v_k(x))dx=\frac{N^{N+1}C}{N!}\int_{\mb R^N}\Phi(u_k(y))dy\underset{k\to+\infty}\to+\infty.$$
On the other hand, replacing $\dd{\alpha_N}$ with any smaller exponent gives uniform boundedness of the Moser-Trudinger functional even in this case, as proved by Adachi and Tanaka \cite{adatan}.
\end{oss}
The proof of Theorem $\dd{\ref{teo:gabri}}$ will need some lemmas, the first of which extends the ``radial lemma'' from Berestycki and P.-L. Lions \cite{berlions}:
\begin{lemma}
\label{lemma:radial}$\dd{}$\\
If $\dd{0\le u\in W^{1,N}_0\left(\mb R^N\right)}$ is a radially nonincreasing function, then
$$u(x)\le\left(\frac{N}{\omega_{N-1}}\right)^\frac{1}N\frac{\|u\|_{L^N\left(\mb R^N\right)}}{|x|}.$$
\end{lemma}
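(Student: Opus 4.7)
The plan is to exploit the monotonicity of $u$ directly: since $u$ is radially nonincreasing, for any $x \in \mathbb{R}^N$ and any $y$ with $|y| \le |x|$ we have $u(y) \ge u(x)$.

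First I would fix $x \neq 0$ and compare the $L^N$ norm of $u$ on the whole space with its contribution on the ball $B_{|x|}(0)$, using monotonicity to bound $u(y)$ from below by $u(x)$ on that ball:
\begin{equation*}
\|u\|_{L^N(\mathbb{R}^N)}^N \;\ge\; \int_{B_{|x|}(0)} u(y)^N \, dy \;\ge\; u(x)^N \cdot |B_{|x|}(0)|.
\end{equation*}
Then I would substitute the explicit volume formula $|B_{|x|}(0)| = \frac{\omega_{N-1}}{N} |x|^N$ (consistent with the normalization $\omega_{N-1} = N \pi^{N/2}/\Gamma(N/2+1)$ adopted in the introduction), and take $N$-th roots to get
\begin{equation*}
u(x) \;\le\; \left(\frac{N}{\omega_{N-1}}\right)^{\frac{1}{N}} \frac{\|u\|_{L^N(\mathbb{R}^N)}}{|x|}.
\end{equation*}

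There is no real obstacle here: the monotonicity makes the pointwise bound immediate, and the Sobolev assumption $u \in W^{1,N}_0(\mathbb{R}^N)$ is only needed to ensure that $u$ has a well-defined (a.e.) radial representative and finite $L^N$ norm, so that the above comparison is meaningful. One minor care point is that strictly speaking the inequality should be read as holding for the radially nonincreasing representative of $u$; this is harmless since that representative is uniquely determined almost everywhere. The analogous Berestycki--Lions statement for the $L^2$ case in \cite{berlions} follows exactly the same pattern, with $N$ replaced by $2$.
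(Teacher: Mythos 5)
Your proof is correct and arrives at exactly the paper's constant. The underlying idea is the same as the paper's (compare $\|u\|_{L^N(\mathbb{R}^N)}^N$ with its contribution on $B_{|x|}(0)$, using radial monotonicity to bound $u$ from below on that ball), but your route is a little more direct. The paper first writes $r^N U(r) = N U(r)\int_0^r\rho^{N-1}\,d\rho \le N\int_0^r U(\rho)\rho^{N-1}\,d\rho$ -- applying monotonicity with $U$ to the first power -- and then invokes H\"older's inequality on $(0,r)$ with weight $\rho^{N-1}\,d\rho$ to bring in the $L^N$ norm. You instead apply monotonicity directly to $u^N$, so that $\int_{B_{|x|}(0)}u^N\,dy\ge u(x)^N|B_{|x|}(0)|$ does all the work and H\"older is never needed; the volume identity $|B_{|x|}(0)|=\frac{\omega_{N-1}}{N}|x|^N$ then gives the stated bound immediately. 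Both yield the same inequality with the same sharp constant, and yours matches the original Berestycki--Lions radial lemma more closely; the paper's H\"older detour buys nothing here (it is arguably there because the same pattern of estimates recurs later in Lemma 1.7, where the Hölder step is genuinely needed). Your remark about working with the nonincreasing a.e.\ representative is appropriate and harmless.
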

\begin{proof}$\dd{}$\\
If $\dd{u(x)=U(|x|)}$, then
$$r^NU(r)=NU(r)\int_0^r\rho^{N-1}d\rho\le N\int_0^rU(\rho)\rho^{N-1}d\rho\le$$
$$\le N\left(\int_0^rU(\rho)^N\rho^{N-1}d\rho\right)^\frac{1}N\left(\int_0^r\rho^{N-1}d\rho\right)^\frac{N-1}N\le$$
$$\le N\left(\int_0^{+\infty}U(\rho)^N\rho^{N-1}d\rho\right)^\frac{1}N\frac{r^{N-1}}{N^\frac{N-1}N}=\left(\frac{N}{\omega_{N-1}}\right)^\frac{1}Nr^{N-1}\|u\|_{L^N\left(\mb R^N\right)},$$
hence the claim.
\end{proof}
\begin{lemma}
\label{lemma:polya}$\dd{}$\\
If $\dd{0\le u\in W^{1,N}_0(\Omega)}$ and $\dd{\ttilde x\in\mb R^N}$ is such that $\dd{u^*(\ttilde x)=t}$, then
$$\int_{B_{|\ttilde x|}(0)}|\nabla u^*(x)|^Ndx\le\int_{\{u>t\}}|\nabla u(x)|^Ndx.$$
\end{lemma}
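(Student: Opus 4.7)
The plan is to reduce the claim to the standard P\'olya-Szeg\H o inequality (already invoked in the paper) applied to the truncation $v:=(u-t)_+\in W^{1,N}_0(\Omega)$. Before invoking it I would address the discrepancy between $B_{|\ttilde x|}(0)$ and the open superlevel set $\{u^*>t\}$: since $u^*$ is radially nonincreasing, $\{u^*>t\}$ is an open ball $B_\rho(0)$ with $\rho\le|\ttilde x|$, and the difference $B_{|\ttilde x|}(0)\setminus B_\rho(0)$ is contained in the level $\{u^*=t\}$. On this annulus $u^*$ is constant, hence $\nabla u^*=0$ almost everywhere there, so
$$\int_{B_{|\ttilde x|}(0)}|\nabla u^*(x)|^Ndx=\int_{\{u^*>t\}}|\nabla u^*(x)|^Ndx.$$

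Next I would exploit the fact that Schwarz symmetrization commutes with postcomposition by any nondecreasing function: applied to $s\mapsto(s-t)_+$, this yields $v^*=(u^*-t)_+$. Indeed, for each $s>0$ the sets $\{v>s\}=\{u>t+s\}$ and $\{(u^*-t)_+>s\}=\{u^*>t+s\}$ have the same measure by the equidistribution identity already recalled in the paper, while $(u^*-t)_+$ is already radially nonincreasing, so it must coincide with $v^*$. From the chain rule one then reads off $|\nabla v|^N=|\nabla u|^N\chi_{\{u>t\}}$ and $|\nabla v^*|^N=|\nabla u^*|^N\chi_{\{u^*>t\}}$ almost everywhere, and the P\'olya-Szeg\H o inequality applied to $v$ becomes
$$\int_{\{u^*>t\}}|\nabla u^*(x)|^Ndx=\int_{\mb R^N}|\nabla v^*(x)|^Ndx\le\int_\Omega|\nabla v(x)|^Ndx=\int_{\{u>t\}}|\nabla u(x)|^Ndx,$$
which combined with the previous identity proves the lemma.

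The only technically delicate point in this strategy is the identification $v^*=(u^*-t)_+$, a standard property of Schwarz rearrangement under monotone postcomposition that nevertheless deserves to be made explicit. An alternative route bypassing it would use the coarea formula directly, writing $\int_{\{u>t\}}|\nabla u|^Ndx=\int_t^{+\infty}\bigl(\int_{\{u=s\}}|\nabla u|^{N-1}\,d\mathcal H^{N-1}\bigr)ds$ and similarly for $u^*$, estimating the inner integral via H\"older and the Euclidean isoperimetric inequality (with equality for the radial $u^*$), and then integrating in $s\in(t,+\infty)$; this amounts to the textbook proof of P\'olya-Szeg\H o's inequality performed level by level from $t$ onward.
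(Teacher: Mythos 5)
Your proof is correct and follows essentially the same route as the paper's: both truncate to $v=(u-t)_+$, identify $v^*=(u^*-t)_+$ via equidistribution, and invoke the standard P\'olya--Szeg\H o inequality, with the passage from $B_{|\ttilde x|}(0)$ to the superlevel set $\{u^*>t\}$ justified by $\nabla u^*$ vanishing a.e.\ on the level $\{u^*=t\}$ (the paper packages this via $\ttilde R=\inf\{|x|:u^*(x)=t\}$, but it is the same observation). The only cosmetic difference is that the paper separates the trivial endpoint cases $t=0$ and $t=\esssup_\Omega u$ explicitly, whereas your argument absorbs them automatically.
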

\begin{proof}$\dd{}$\\
If $\dd{t=u^*(\ttilde x)=0}$, then it is the standard P\'olya-Szeg\H o inequality, whereas if $\dd{t=\esssup_\Omega u}$, then the inequality is trivial since both sides are zero. Now let $\dd{t\in(0,\esssup_\Omega u)}$; then $\dd{\ttilde R:=\inf\{|x|:u^*(x)=t\}}$ verifies
$$\int_{B_{|\ttilde x|}(0)}|\nabla u^*(x)|^Ndx=\int_{B_{\ttilde R}(0)}|\nabla u^*(x)|^Ndx$$
and $\dd{B_{\ttilde R}(0)=\{u^*>t\}=\{u>t\}^*}$, where the last asterisk denotes the Schwarz symmetrization of a set.\\
The function
$$v(x)=\left\{\begin{array}{ll}u(x)-t&\text{if }u(x)>t\\0&\text{otherwise}\end{array}\right.$$
verifies
$$v^*(x)=\left\{\begin{array}{ll}u^*(x)-t&\text{in }B_{\ttilde R}(0)\\0&\text{otherwise}\end{array}\right.,$$
hence one can conclude, through P\'olya-Szeg\H o inequality:
$$\int_{B_{|\ttilde x|}(0)}|\nabla u^*(x)|^Ndx=\int_{B_{\ttilde R}(0)}|\nabla u^*(x)|^Ndx=\int_{\mb R^N}|\nabla v^*(x)|^Ndx\le$$
$$\le\int_\Omega|\nabla v(x)|^Ndx=\int_{\{u>t\}}|\nabla u(x)|^Ndx.$$
\end{proof}
\begin{lemma}
\label{lemma:gabri}$\dd{}$\\
If $\dd{\lambda_1(\Omega)>0}$, $\dd{0\le u\in W^{1,N}_0(\Omega)}$ and $\dd{\int_\Omega|\nabla u(x)|^Ndx=1}$, then
$$u^*(\ttilde x)^N\le\frac{N}{\omega_{N-1}|\ttilde x|^N\lambda_1(\Omega)}\left(1-\int_{B_{|\ttilde x|}(0)}|\nabla u^*(x)|^Ndx\right).$$
\end{lemma}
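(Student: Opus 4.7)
The plan is to apply Poincaré's inequality to a well-chosen truncation of $u$, and then use Lemma \ref{lemma:polya} at the end to replace the level-set gradient integral of $u$ by the gradient integral of $u^*$ on a ball.

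More precisely, set $t:=u^*(\ttilde x)$ and consider the truncation
$$w(x):=\min\{u(x),t\}\in W^{1,N}_0(\Omega),$$
which satisfies $|\nabla w|=|\nabla u|\chi_{\{u\le t\}}$ almost everywhere. First I would apply Poincaré's inequality to $w$, obtaining
$$\lambda_1(\Omega)\int_\Omega|w(x)|^Ndx\le\int_\Omega|\nabla w(x)|^Ndx=\int_{\{u\le t\}}|\nabla u(x)|^Ndx=1-\int_{\{u>t\}}|\nabla u(x)|^Ndx.$$

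Next I would bound the left-hand side from below. Since $w\equiv t$ on $\{u\ge t\}$ and since $u$ and $u^*$ are equidistributed, together with the fact that $u^*$ is radially nonincreasing and $u^*(\ttilde x)=t$ (so $B_{|\ttilde x|}(0)\subseteq\{u^*\ge t\}$), we have
$$\int_\Omega|w(x)|^Ndx\ge t^N|\{u\ge t\}|=t^N|\{u^*\ge t\}|\ge t^N|B_{|\ttilde x|}(0)|=\frac{\omega_{N-1}}{N}|\ttilde x|^Nt^N.$$
Combining the two estimates gives
$$u^*(\ttilde x)^N=t^N\le\frac{N}{\omega_{N-1}|\ttilde x|^N\lambda_1(\Omega)}\left(1-\int_{\{u>t\}}|\nabla u(x)|^Ndx\right).$$

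Finally, I would invoke Lemma \ref{lemma:polya} with the same $\ttilde x$, namely
$$\int_{B_{|\ttilde x|}(0)}|\nabla u^*(x)|^Ndx\le\int_{\{u>t\}}|\nabla u(x)|^Ndx,$$
to replace the integral on the right and obtain the claimed inequality. The only delicate point is checking that the trivial borderline cases $t=0$ and $t=\esssup_\Omega u$ are covered (in the first case the inequality is vacuous once $|\ttilde x|>0$ and in the second case the chain of inequalities on $|\{u\ge t\}|$ is still valid), but no genuine obstacle arises: the crux is simply the choice of the truncation $w=\min\{u,t\}$, which lets Poincaré's inequality on $\Omega$ interact with the level set structure that drives the symmetrization arguments.
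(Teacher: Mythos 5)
Your proof is correct and follows essentially the same route as the paper: the same truncation $w=\min\{u,t\}$, the same application of Poincar\'e's inequality to $w$, and the same final appeal to Lemma~\ref{lemma:polya}. The only difference is cosmetic: where you bound $\int_\Omega w^N\ge t^N\,|B_{|\ttilde x|}(0)|$ directly via the containment $B_{|\ttilde x|}(0)\subseteq\{u^*\ge t\}$ and equidistribution, the paper obtains the same intermediate estimate by applying Lemma~\ref{lemma:radial} to $w^*$.
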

\begin{proof}$\dd{}$\\
If $\dd{u^*(\ttilde x)=0}$ the statement is trivial, otherwise setting $\dd{t:=u^*(\ttilde x)>0}$, $\dd{\overline{R}:=}$\\$\dd{=\sup\{|x|:u^*(x)=t\}}$ and
$$w(x)=\left\{\begin{array}{ll}t&\text{if }u(x)>t\\u(x)&\text{otherwise}\end{array}\right.$$
one gets
$$w^*(x)=\left\{\begin{array}{ll}t&\text{in }B_{\overline R}(0)\\u^*(x)&\text{otherwise}\end{array}\right..$$
Thus, applying Lemma $\dd{\ref{lemma:radial}}$,
$$u^*(\ttilde x)^N=t^N=w^*(\ttilde x)^N\le\frac{N}{\omega_{N-1}|\ttilde x|^N}\int_{\mb R^N}{w^*(x)}^Ndx=$$  
$$=\frac{N}{\omega_{N-1}|\ttilde x|^N}\int_\Omega w(x)^Ndx\le\frac{N}{\omega_{N-1}|\ttilde x|^N\lambda_1(\Omega)}\int_\Omega|\nabla w(x)|^Ndx\le$$
$$\le\frac{N}{\omega_{N-1}|\ttilde x|^N\lambda_1(\Omega)}\int_{\{u\le t\}}|\nabla u(x)|^Ndx\le$$
$$\le\frac{N}{\omega_{N-1}|\ttilde x|^N\lambda_1(\Omega)}\left(1-\int_{\{u>t\}}|\nabla u(x)|^Ndx\right)\le$$
$$\le\frac{N}{\omega_{N-1}|\ttilde x|^N\lambda_1(\Omega)}\left(1-\int_{B_{|\ttilde x|}(0)}|\nabla u^*(x)|^Ndx\right),$$
where the last inequality follows from Lemma $\dd{\ref{lemma:polya}}$.
\end{proof}
Now we have all the tools necessary to prove the main result of this section.
\begin{proof}[Proof of Theorem $\dd{\ref{teo:gabri}}$]$\dd{}$\\
The Moser-Trudinger inequality trivially implies Poincaré's inequality.\\
For the other implication, symmetrization allows to consider for the supremum only symmetrized functions $\dd{u\in W^{1,N}_0\left(\mb R^N\right)}$ satisfying $$\int_{\mb R^N}|\nabla u(x)|^Ndx\le1.$$
If $\dd{\int_{B_1(0)}|\nabla u(x)|^Ndx=1}$, then $\dd{u\in W^{1,N}_0(B_1(0))}$ hence the classical Moser--Trudinger inequality on $\dd{B_1(0)}$ gives
$$\int_{\mb R^N}\Phi(u(x))dx=\int_{B_1(0)}\Phi(u(x))dx\le C(N,\Omega).$$
Supposing instead $\dd{\int_{B_1(0)}|\nabla u(x)|^Ndx<1}$, one can estimate the Moser-Trudinger functional on $\dd{B_1(0)^c}$ through Lemma $\dd{\ref{lemma:radial}}$.\\
In fact one has
$$u(x)\le\left(\frac{N}{\omega_{N-1}}\right)^\frac{1}{N}\frac{\|u\|_{L^N(\Omega)}}{|x|}\le\left(\frac{N}{\omega_{N-1}\lambda_1(\Omega)}\right)^\frac{1}N,$$
hence, using the estimate
\begin{equation}
\label{eq:stimaesp}
e^x-\sum_{j=0}^{N-2}\frac{x^j}{j!}=\sum_{j=N-1}^{+\infty}\frac{x^j}{j!}\le\sum_{j=N-1}^{+\infty}\frac{x^j}{(j-N+1)!(N-1)!}=\frac{x^{N-1}}{(N-1)!}e^x
\end{equation}
for any $\dd{x\ge0}$ , $\dd{2\le N\in\mb N}$ one gets
$$\int_{B_1(0)^c}\Phi(u(x))dx\le\int_{B_1(0)^c}\frac{\alpha_N^{N-1}}{(N-1)!}|u(x)|^Ne^{\alpha_N|u(x)|^\frac{N}{N-1}}dx\le$$
$$\le\frac{\alpha_N^{N-1}}{(N-1)!}e^{\alpha_N\left(\frac{N}{\omega_{N-1}\lambda_1(\Omega)}\right)^\frac{1}{N-1}}\int_{B_1(0)^c}|u(x)|^Ndx\le$$
\begin{equation}
\label{eq:fuori}
\le\frac{\alpha_N^{N-1}}{(N-1)!}\frac{e^{\alpha_N\left(\frac{N}{\omega_{N-1}\lambda_1(\Omega)}\right)^\frac{1}{N-1}}}{\lambda_1(\Omega)}=:C_1(N,\Omega).
\end{equation}
To estimate the integral over $\dd{B_1(0)}$, it suffices to consider the function 
$$v(x)=\left\{\begin{array}{cc}u(x)-u(1)&\text{in }B_1(0)\\0&\text{otherwise}\end{array}\right.$$
where $\dd{u(1)}$ indicates, with a little abuse of notation, the trace of $\dd{u}$ on the boundary of $\dd{B_1(0)}$; the elementary inequality
$$(A+B)^p\le A^p+pA^{p-1}B+|B|^p$$
for $\dd{A\ge0}$, $\dd{B\ge-A}$ and $\dd{p\in[0,2]}$ and a weighted Young inequality give, for any $\dd{\epsi>0}$,
$$u(x)^\frac{N}{N-1}\le v(x)^\frac{N}{N-1}(1+\epsi)+{u(1)}^\frac{N}{N-1}\left(1+\frac{1}{\epsi^\frac{1}{N-1}(N-1)^\frac{1}{N-1}}\right)=$$
$$=w(x)^\frac{N}{N-1}+{u(1)}^\frac{N}{N-1}\left(1+\frac{1}{((N-1)\epsi)^\frac{1}{N-1}}\right),$$
where $\dd{w(x):=v(x)(1+\epsi)^\frac{N-1}N}$.\\
Choosing
$$\epsi=\frac{1-\left(\int_{B_1(0)}|\nabla u(x)|^Ndx\right)^\frac{1}{N-1}}{\left(\int_{B_1(0)}|\nabla u(x)|^Ndx\right)^\frac{1}{N-1}}$$
one gets
$$\int_{B_1(0)}|\nabla w(x)|^Ndx=(1+\epsi)^{N-1}\int_{B_1(0)}|\nabla v(x)|^Ndx=$$
$$=\left(1+\frac{1-\left(\int_{B_1(0)}|\nabla u(x)|^Ndx\right)^\frac{1}{N-1}}{\left(\int_{B_1(0)}|\nabla u(x)|^Ndx\right)^\frac{1}{N-1}}\right)^{N-1}\int_{B_1(0)}|\nabla u(x)|^Ndx=1,$$
hence we can apply the Moser-Trudinger inequality on $\dd{B_1(0)}$ to obtain
$$\int_{B_1(0)}e^{\alpha_Nw(x)^\frac{N}{N-1}}dx\le C(N,\Omega).$$
Moreover, from Lemma $\dd{\ref{lemma:gabri}}$,
$$\frac{u(1)^\frac{N}{N-1}}{\epsi^\frac{1}{N-1}}=u(1)^\frac{N}{N-1}\frac{\left(\int_{B_1(0)}|\nabla u(x)|^Ndx\right)^\frac{1}{N-1}}{\left(1-\left(\int_{B_1(0)}|\nabla u(x)|^Ndx\right)^\frac{1}{N-1}\right)^\frac{1}{N-1}}\le$$
$$\le\frac{u(1)^\frac{N}{N-1}}{\left(1-\left(\int_{B_1(0)}|\nabla u(x)|^Ndx\right)^\frac{1}{N-1}\right)^\frac{1}{N-1}}\le$$
$$\le\frac{N}{\omega_{N-1}\lambda_1(\Omega)}\frac{\left(1-\int_{B_1(0)}|\nabla u(x)|^Ndx\right)^\frac{1}{N-1}}{\left(1-\left(\int_{B_1(0)}|\nabla u(x)|^Ndx\right)^\frac{1}{N-1}\right)^\frac{1}{N-1}}\le C_2(N,\Omega),$$
where last passage follows from boundedness of $\dd{t\to\frac{1-t}{1-t^\frac{1}{N-1}}}$ on $\dd{[0,1)}$.\\
Hence
$$\int_{B_1(0)}\Phi(u(x))dx\le\int_{B_1(0)}e^{\alpha_N{|u(x)|}^\frac{N}{N-1}}dx\le$$
$$\le e^{\alpha_N{u(1)}^\frac{N}{N-1}\left(1+\frac{1}{((N-1)\epsi)^\frac{1}{N-1}}\right)}\int_{B_1(0)}e^{\alpha_Nw(x)^\frac{N}{N-1}}dx$$
\begin{equation}
\label{eq:dentro}
\le e^{\left(\frac{N}{\omega_{N-1}\lambda_1(\Omega)}\right)^\frac{1}{N-1}+\frac{C_2(N,\Omega)}{(N-1)^\frac{1}{N-1}}}C(N,\Omega)=:C_3(N,\Omega),
\end{equation}
and the conclusion follows from $\dd{\eqref{eq:fuori}}$ and $\dd{\eqref{eq:dentro}}$.
\begin{oss}$\dd{}$\\
With few modifications, the proof of Theorem $\dd{\ref{teo:gabri}}$ can be extended to a Moser-like functional with a different number of term removed, that is
$$\Phi_k(u):=e^{\alpha_N|u|^\frac{N}{N-1}}-\sum_{j=0}^{k-1}\frac{\alpha_N^j|u|^\frac{jN}{N-1}}{j!}.$$
Even in this case, if the first power of the functional is controlled by the $\dd{L^N}$ norm of the gradient, then all the functional is, precisely
$$\lambda_{1,k}(\Omega):=\inf_{0\not\equiv u\in W^{1,N}_0(\Omega)}\frac{\int_\Omega|\nabla u(x)|^Ndx}{\int_\Omega|u(x)|^\frac{kN}{N-1}dx}>0$$
$$\Updownarrow$$
$$\sup_{u\in W^{1,N}_0(\Omega),\int_\Omega|\nabla u(x)|^Ndx\le1}\int_\Omega\Phi_k(u(x))dx<+\infty.$$
\end{oss}
\end{proof}

\section{The Moser-Trudinger inequality for conformal metrics}
A natural question to ask is whether Theorems $\dd{\ref{teo:mansand}}$ and $\dd{\ref{teo:gabri}}$ can be extended to other mainfolds besides Euclidean domains, such as the unit ball in $\dd{\mb R^N}$ endowed with a conformal metric.\\
A characterization of the metrics where the Moser-Trudinger inequality holds was given by Gi. Mancini and Sandeep in the $\dd{2}$-dimensional case \cite{mansand} and it was later extended by themselves and Tintarev \cite{mansanti} to any dimension.
\begin{teo}
\label{teo:conformi}$\dd{}$\\
Let $\dd{g_\rho=\rho(x)g_e}$ be a conformal metric on $\dd{B=B_1(0)\subset\mb R^N}$.\\
Then the following conditions are equivalent:
\begin{enumerate}
\item $\dd{g_\rho}$ is bounded by the hyperbolic metric, that is
$$\exists\;C>0\quad\text{such that }\rho(x)\le\frac{C}{\left(1-|x|^2\right)^2}\quad\forall\;x\in B.$$
\item The Moser-Trudinger inequality holds for the metric $\dd{g}$, that is
$$\sup_{u\in W^{1,N}_0(B,g_\rho),\int_B|\nabla_{g_\rho}u(x)|^NdV_{g_\rho}(x)\le1}\int_B\Phi(u(x))dV_{g_\rho}(x)<+\infty.$$
\end{enumerate}
\end{teo}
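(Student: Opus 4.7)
The backbone of the argument is the conformal invariance of the $N$-Dirichlet integral: on an $N$-dimensional domain with $g_\rho=\rho g_e$ one has $|\nabla_{g_\rho}u|^N\,dV_{g_\rho}=|\nabla u|^N\,dx$ and $dV_{g_\rho}=\rho^{N/2}\,dx$, so statement (2) is equivalent to the weighted Euclidean inequality
$$
\sup_{u\in W^{1,N}_0(B),\,\int_B|\nabla u|^N\,dx\le1}\int_B\Phi(u(x))\,\rho(x)^{N/2}\,dx<+\infty,
$$
while condition (1) reads precisely $\rho^{N/2}(x)\le C(1-|x|^2)^{-N}$, i.e.\ pointwise domination by the volume density of the hyperbolic ball. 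The problem thus splits into (a) proving the Moser-Trudinger inequality for the hyperbolic metric on $B$, and (b) showing that strictly larger conformal factors destroy the inequality.

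For (1)$\Rightarrow$(2), once the pointwise bound $\rho^{N/2}\le C(1-|x|^2)^{-N}$ is in force, the target reduces to the Moser-Trudinger inequality on the hyperbolic ball $(B,g_H)$. I would prove this hyperbolic version by first reducing to functions that are radial and nonincreasing via hyperbolic Schwarz symmetrization, which preserves both the conformally invariant Dirichlet integral (by hyperbolic P\'olya-Szeg\H o) and the $\Phi$-integral against $dV_{g_H}$ (by equidistribution). In geodesic polar coordinates $|x|=\tanh t$ the hyperbolic volume element becomes $\omega_{N-1}\sinh^{N-1}(t)\,dt$ and the Dirichlet energy of a radial $u$ reads $\omega_{N-1}\int_0^{+\infty}|u'(t)|^N\sinh^{N-1}(t)\,dt$, so Moser's substitution $ds=\omega_{N-1}\sinh^{N-1}(t)\,dt$ converts the problem to a one-dimensional inequality on the half-line. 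One then applies Moser's one-dimensional lemma on a compact region $\{|x|\le\tanh T\}$, while the exponential tail is controlled via the radial decay $u(x)\le C|x|^{-1}\|u\|_{L^N}$ from Lemma \ref{lemma:radial} (applicable because $\lambda_1(B)>0$ bounds $\|u\|_{L^N}$ in terms of $\|\nabla u\|_{L^N}$); the $N-1$ subtracted Taylor terms in $\Phi$ are exactly those that fail to be summable against the exponentially growing weight $\sinh^{N-1}(t)\sim 2^{1-N}e^{(N-1)t}$, so their removal is what renders the tail integrable.

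For (2)$\Rightarrow$(1) I would argue by contradiction with concentrated Moser sequences: if (1) fails, pick $x_k\to\partial B$ with $\rho(x_k)(1-|x_k|^2)^2\to+\infty$ and localize a rescaled Moser extremal on the Euclidean ball $B_{r_k}(x_k)$ with $r_k$ a fixed small fraction of $1-|x_k|$. On the concentration peak $\rho^{N/2}$ is comparable to $\rho(x_k)^{N/2}$, so tracking the scaling shows the weighted Moser-Trudinger functional is of order $(\rho(x_k)(1-|x_k|^2)^2)^{N/2}$, which diverges and contradicts (2). The main obstacle is the precise one-dimensional estimate underpinning (1)$\Rightarrow$(2): the exponentially growing hyperbolic weight forces one to subtract exactly $N-1$ Taylor terms, and making this rigorous requires the careful splitting and radial decay control described above, essentially the hyperbolic analogue of Moser's original argument.
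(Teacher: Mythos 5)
The paper does not prove this statement: Theorem \ref{teo:conformi} is quoted as an external result, attributed to Gi.~Mancini and Sandeep \cite{mansand} for $N=2$ and to Mancini--Sandeep--Tintarev \cite{mansanti} for general $N$, and is used as a black box. There is therefore no proof in the paper against which to compare yours.

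That said, your sketch has two genuine gaps that you would need to close. In the direction (1)$\Rightarrow$(2), controlling the boundary tail by the radial decay of Lemma \ref{lemma:radial} (or even the sharper Lemma \ref{lemma:radialip}) is insufficient: with $\|\nabla u\|_{L^N}=1$, Lemma \ref{lemma:radialip} gives $e^{\alpha_N u^{N/(N-1)}}\le|x|^{-N}$, so the first surviving term of $\Phi$ obeys $\Phi(u)\lesssim(\log\frac{1}{|x|})^{N-1}\sim(1-|x|)^{N-1}$ near $\partial B$, and against the hyperbolic volume $dV_{g_h}\sim(1-|x|)^{-N}\,dx$ this yields a $(1-r)^{-1}\,dr$ tail, which diverges. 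The uniform pointwise bound is not saturated by any single admissible $u$; one needs to exploit how the Dirichlet energy is distributed across dyadic shells (the device used by Moser, or a splitting of the type carried out in Lemma \ref{lemma:gabri} and the proof of Theorem \ref{teo:gabri}). Your version with Lemma \ref{lemma:radial} is worse still, since it only gives boundedness, not decay, near $|x|=1$. In the direction (2)$\Rightarrow$(1), choosing $x_k$ with $\rho(x_k)(1-|x_k|^2)^2\to\infty$ and claiming $\rho^{N/2}$ is ``comparable to $\rho(x_k)^{N/2}$'' on $B_{r_k}(x_k)$ with $r_k$ a fixed fraction of $1-|x_k|$ assumes a lower bound for $\rho$ on a whole (hyperbolic-scale) ball, whereas the hypothesis gives blow-up only at isolated points; the paper itself flags this issue in the remark following Proposition \ref{prop:rho}, noting that mere unboundedness of $\zeta$ is weaker than $\lim_{x\to\ttilde x}\zeta(x)=+\infty$. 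You would need an averaging or continuity argument to make the Moser-sequence lower bound survive.
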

In view of Theorem $\dd{\ref{teo:conformi}}$, the question can be rewritten as: are the conformal metrics on $\dd{B}$ such that Poincaré's inequality holds all and only the ones which are bounded by the hyperbolic metric $\dd{g_h}$?\\
As usual, Poincaré's inequality is implied by the Moser-Trudinger inequality, and therefore by the boundedness of the conformal factor with respect to the hyperbolic metric. Moreover, a partial converse can be shown easily:
\begin{prop}
\label{prop:rho}$\dd{}$\\
Let $\dd{g_\rho=\rho(x)g_e}$ be a conformal metric defined on the unit ball $\dd{B=B_1(0)\subset\mb R^N}$ such that
\begin{equation}
\label{eq:rhobordo}
\lim_{x\to\ttilde x}\zeta(x)=+\infty\quad\text{for some }\ttilde x\in\de B
\end{equation}
where $\dd{\zeta(x)=\frac{\left(1-|x|^2\right)^2}4\rho(x)}$ is the conformal factor with respect to the hyperbolic metric.\\
Then, $\dd{\lambda_1(B,g_\rho)=0}$.
\end{prop}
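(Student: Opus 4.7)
The plan is to use the conformal invariance of the $N$-Dirichlet energy to rewrite $\lambda_1(B,g_\rho)$ as a Rayleigh quotient with Euclidean numerator and weighted denominator, and then to exhibit test functions concentrating at $\ttilde x$ whose quotient vanishes.

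First I would record the conformal identities for $g_\rho = \rho g_e$ in dimension $N$: $dV_{g_\rho} = \rho^{N/2}\,dx$ and $|\nabla_{g_\rho}u|_{g_\rho}^{N}\,dV_{g_\rho} = |\nabla u|^N\,dx$. This turns the definition of $\lambda_1(B,g_\rho)$ into
$$\lambda_1(B,g_\rho) \;=\; \inf_{0\not\equiv u\in W^{1,N}_0(B)}\frac{\int_B |\nabla u|^N\,dx}{\int_B |u|^N\,\rho(x)^{N/2}\,dx}.$$
Rewriting $\rho^{N/2} = \zeta^{N/2}\cdot 2^N(1-|x|^2)^{-N}$ then lets the assumption $\zeta(x)\to+\infty$ feed directly into a lower bound for the denominator.

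Next, for a fixed cutoff $\phi\in C^\infty_c(B_{1/2}(0))$ with $\phi\equiv 1$ on $B_{1/4}(0)$, I would set $x_t := (1-t)\ttilde x$ and $\phi_t(x) := \phi((x-x_t)/t)$ for small $t>0$. Since $1-|x_t|=t$, the support $B_{t/2}(x_t)$ lies inside $B$, so $\phi_t\in W^{1,N}_0(B,g_\rho)$. A change of variables yields $\int_B |\nabla \phi_t|^N dx = \int_{\mb R^N}|\nabla \phi|^N dy$, independent of $t$. On the support of $\phi_t$ one has $|x|\ge 1-3t/2$, hence $1-|x|^2\le 3t$, so
$$\int_B |\phi_t|^N\rho^{N/2}dx \;\ge\; \Bigl(\inf_{B_{t/2}(x_t)}\zeta\Bigr)^{\!N/2}\frac{2^N}{(3t)^N}\,\bigl|B_{t/4}(x_t)\bigr| \;=\; c_N\Bigl(\inf_{B_{t/2}(x_t)}\zeta\Bigr)^{\!N/2},$$
and the right-hand side tends to $+\infty$ as $t\to 0^+$, since $B_{t/2}(x_t)$ shrinks to $\ttilde x$ and $\zeta\to+\infty$ there. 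The Rayleigh quotient of $\phi_t$ therefore tends to $0$, which gives $\lambda_1(B,g_\rho)=0$.

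This is essentially a direct test-function construction and I do not expect any real obstacle: the only checks are the elementary geometric inequalities $B_{t/2}(x_t)\subset B$ and $1-|x|^2\le 3t$ on the support of $\phi_t$, together with the conformal rewriting which makes the scale-invariance of the numerator transparent. The one place where a cleaner formulation might help is the treatment of $W^{1,N}_0(B,g_\rho)$, but $\phi_t\in C^\infty_c(B)$ renders this membership automatic.
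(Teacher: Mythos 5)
Your proof is correct, and it takes a genuinely different route from the paper's. The paper keeps a \emph{fixed} bump $u$ supported in $B_{1/2}(0)$ and transports it to $\ttilde x$ via the M\"obius maps $\pphi_{x_k}$ of Lemma~\ref{lemma:phi}: since these are hyperbolic isometries, the hyperbolic volume of the image is constant, and writing $dV_{g_\rho}=\zeta^{N/2}\,dV_{g_h}$ the weight $\zeta\ge M$ on $\pphi_{x_k}(B_{1/2}(0))$ for $k$ large forces $\int_B|u_k|^N\,dV_{g_\rho}\ge M^{N/2}\int|u|^N\,dV_{g_h}\to\infty$; the numerator is preserved by conformal invariance of the $N$-Dirichlet integral. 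You instead use purely Euclidean translations and dilations $\phi_t(x)=\phi\bigl((x-x_t)/t\bigr)$ concentrating at $\ttilde x$: the $N$-Dirichlet integral is scale-invariant, so the numerator is again constant, and in the denominator the $t^N$ from the volume of the shrinking bump exactly cancels the $(1-|x|^2)^{-N}\sim t^{-N}$ growth of the hyperbolic weight, leaving the $\zeta^{N/2}$ factor, which blows up. Conceptually the two arguments are the same --- a M\"obius map is, near the boundary, essentially the Euclidean dilation you wrote down explicitly --- but your version is self-contained and elementary, avoiding any reliance on the M\"obius machinery and the exact isometry property of $g_h$, at the modest cost of a couple of explicit geometric inequalities ($B_{t/2}(x_t)\subset B$ and $1-|x|^2\le 3t$ on the support). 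The paper's version, on the other hand, fits more smoothly into the surrounding text since Lemma~\ref{lemma:phi} is needed again in the proof of Theorem~\ref{teo:funodue}.
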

\begin{oss}$\dd{}$\\
Condition $\dd{\eqref{eq:rhobordo}}$ is actually stronger than the unboundedness of $\dd{\zeta}$, that can be expressed as
$$\limsup_{x\to\ttilde x}\zeta(x)=+\infty\quad\text{for some }\ttilde x\in\de B.$$
\end{oss}
To prove Proposition $\dd{\ref{prop:rho}}$, and also later on, one requires some conformal diffeomorphisms, called M\"obius maps, that extend in higher dimension the well-known biholomorphisms of the complex unit disc $\dd{\pphi_a(z)=\frac{z+a}{1+\overline az}}$.\\
The following lemma, whose proof is a simple calculation, lists the main properties of the M\"obius maps (see \cite{rat} for more details):
\begin{lemma}
\label{lemma:phi}$\dd{}$\\
For any $\dd{a\in B}$ and $\dd{x\in\mb R^N\backslash\left\{-\frac{a}{|a|^2}\right\}}$, the map
$$\pphi_a(x)=\frac{\left(1-|a|^2\right)x+\left(|x|^2+2\langle a,x\rangle+1\right)a}{|a|^2|x|^2+2\langle a,x\rangle+1}$$
has the following properties:
\begin{enumerate}
\item $\dd{\pphi_a(0)=a}$.
\item $\dd{\pphi_a}$ is a diffeomorphism between $\dd{\mb R^N\backslash\left\{-\frac{a}{|a|^2}\right\}}$ and $\dd{\mb R^N\backslash\left\{\frac{a}{|a|^2}\right\}}$.
\item $\dd{\pphi_a(B)=B}$.
\item $\dd{\pphi_a}$ is conformal.
\item $\dd{\pphi_a}$ is a hyperbolic isometry.
\item $\dd{\pphi_a(B_R(0))=B_{\frac{R\left(1-|a|^2\right)}{1-R^2|a|^2}}\left(\frac{\left(1-R^2\right)}{1-R^2|a|^2}a\right)}$ $\dd{\forall\;R\in(0,1)}$.
\end{enumerate}
\end{lemma}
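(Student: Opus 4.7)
The strategy is to establish all six properties via direct algebraic computation, anchored by two key identities about the Möbius map. Write $D(x):=|a|^2|x|^2+2\langle a,x\rangle+1$ for the common denominator; the case $a=0$ gives $\varphi_0=\mathrm{id}$ trivially, so assume $a\neq0$. Then the factorization $D(x)=|a|^2\bigl|x+a/|a|^2\bigr|^2$ pins down the unique pole and shows $D(x)>0$ elsewhere. Property (i) is immediate from substituting $x=0$. For (ii), I would prove the sharper statement $\varphi_a^{-1}=\varphi_{-a}$ by computing $\varphi_a\circ\varphi_{-a}$ and simplifying to the identity; combined with smoothness away from the pole, this yields the diffeomorphism between the claimed complements.

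The first key identity is
$$1-|\varphi_a(x)|^2=\frac{(1-|a|^2)(1-|x|^2)}{D(x)},$$
verified by expanding the squared norm of the numerator of $\varphi_a(x)$ and comparing with $D(x)^2$. This yields (iii), since $\varphi_a$ then preserves the sign of $1-|x|^2$ and in particular sends $B$ to $B$. The second identity is that the differential satisfies $D\varphi_a(x)^{T}D\varphi_a(x)=\lambda(x)^2 I$ with scalar conformal factor $\lambda(x)=(1-|a|^2)/D(x)$, established either by direct componentwise differentiation, or, more cleanly, by decomposing $\varphi_a$ into an inversion, a translation, and another inversion, each manifestly conformal; this gives (iv). Combining the two identities,
$$\frac{\lambda(x)}{1-|\varphi_a(x)|^2}=\frac{1}{1-|x|^2},$$
which is precisely the pullback relation $\varphi_a^{\ast}g_h=g_h$ for the hyperbolic metric $g_h=4(1-|x|^2)^{-2}g_e$, proving (v).

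For (vi) I would then use (v): $B_R(0)$ is the hyperbolic ball of hyperbolic center $0$ and hyperbolic radius $\operatorname{arctanh}R$, so its image is the hyperbolic ball centered at $a=\varphi_a(0)$ with the same hyperbolic radius, which is again a Euclidean ball contained in $B$. The explicit Euclidean center and radius are recovered by evaluating $\varphi_a$ at the two antipodal points $\pm R\,a/|a|$ of $\partial B_R(0)$ collinear with $a$, taking the midpoint as the center and half the Euclidean distance as the radius. The chief obstacle throughout is bookkeeping in high-degree polynomial identities; no step requires ingenuity, but the algebra must be executed carefully to avoid sign or coefficient errors, which is presumably what the authors mean by calling the proof \emph{a simple calculation}.
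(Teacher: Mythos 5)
The paper itself offers no proof of this lemma: it dismisses it as ``a simple calculation'' and refers to the textbook \cite{rat}. Your proposal supplies that calculation, and it is correct.

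A few remarks on the details. The factorization $D(x)=|a|^2\,|x+a/|a|^2|^2$ is indeed the right way to locate the pole and show positivity elsewhere. The claim $\varphi_a^{-1}=\varphi_{-a}$ is the correct one for this normalization (note that $\varphi_a(a)=\frac{2a}{1+|a|^2}\ne 0$, so $\varphi_a$ is \emph{not} an involution, unlike some other normalizations in the literature; it is worth making that distinction explicit since readers may have the involutive version in mind). For conformality, the reduction to inversions is cleaner than direct differentiation, but if you go the componentwise route the formula $\varphi_a(x)=a+\dfrac{(1-|a|^2)(x+|x|^2 a)}{D(x)}$ (obtained by subtracting $a$ and simplifying) cuts the bookkeeping considerably. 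For (vi), invoking the hyperbolic-isometry property to conclude that the image is a Euclidean ball, then pinning down center and radius from the two collinear boundary points $\varphi_a(\pm Ra/|a|)$, is exactly the standard argument; the only point worth stating explicitly is that $\varphi_a$ commutes with orthogonal maps fixing $a$ (visible from the formula, which depends on $x$ only through $x$, $|x|^2$, and $\langle a,x\rangle$), so the Euclidean center lies on the ray $\mathbb{R}a$ and those two images really are diametrically opposite. Carrying out the arithmetic gives $\varphi_a(\pm Ra/|a|)=\frac{a}{|a|}\cdot\frac{\pm R+|a|}{1\pm R|a|}$, whose midpoint and half-separation reproduce the stated center and radius. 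In short: the proposal matches what the cited reference does; no gap.
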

\begin{proof}[Proof of Proposition $\dd{\ref{prop:rho}}$]$\dd{}$\\
A sequence $\dd{u_k\in W^{1,N}_0(B,g_\rho)}$ such that
$$\int_B|\nabla_{g_\rho}u_k(x)|^NdV_{g_\rho}(x)\le1\quad\text{and}\quad\int_B|u_k(x)|^NdV_{g_\rho}(x)\underset{k\to+\infty}\to+\infty$$
can be built in this way: given a function $\dd{0\not\equiv u\in W^{1,N}_0(B,g_{\rho})}$ such that
$$\spt u\subset B_{\frac{1}2}(0)\quad\text{and}\quad\int_B|\nabla_{g_\rho}u(y)|^NdV_{g_\rho}(y)\le1$$
and a sequence $\dd{x_k\underset{k\to+\infty}\to\ttilde x}$, it suffices to put $\dd{u_k=u\circ\pphi_{x_k}^{-1}}$, where $\dd{\pphi_{x_k}}$ are as in Lemma $\dd{\ref{lemma:phi}}$.\\
Conformal invariance of the Dirichlet integral yields
$$\int_B|\nabla_{g_\rho}u_k(x)|^NdV_{g_\rho}(x)=\int_B|\nabla_{g_\rho}u(y)|^NdV_{g_\rho}(y)\le1$$
whereas, to work with the $\dd{L^N}$ norm, one observes that for for every $\dd{M>0}$ the set $\dd{\{\zeta\ge M\}}$ is a neighborhood of $\dd{\ttilde x}$, hence, due to the properties of $\dd{\pphi_{x_k}}$, it contains $\dd{\pphi_{x_k}\left(B_\frac{1}2(0)\right)}$ for $\dd{k}$ sufficiently large.\\
Thus, being the $\dd{\pphi_{x_k}}$'s hyperbolic isometries,
$$\int_B|u_k(x)|^NdV_{g_\rho}(x)=\int_{\pphi_{x_k}\left(B_\frac{1}2(0)\right)}|u_k(x)|^N\zeta(x)^\frac{N}2dV_{g_h}(x)\ge$$
$$\ge M^\frac{N}2\int_{\pphi_{x_k}\left(B_\frac{1}2(0)\right)}|u_k(x)|^NdV_{g_h}(x)=M^\frac{N}2\int_{B_\frac{1}2(0)}|u(y)|^NdV_{g_h}(y)$$
so, being $\dd{M}$ arbitrary, one has
$$\int_B|u_k(x)|^NdV_{g_\rho}(x)\underset{k\to+\infty}\to+\infty$$
and so $\dd{\lambda_1(B,g_\rho)=0}$.
\end{proof}
However, despite this result, in general Poincaré's inequality does not imply exponential integrability up to the critical exponent, that is Theorems $\dd{\ref{teo:mansand}}$ and $\dd{\ref{teo:gabri}}$ cannot be extended in this context.\\
Moreover, not only exponential but also polynomial integrability is not ensured; actually, for any couple of nonlinearities satisfying some basic properties one can build a metric, which attains higher and higher values on small balls accumulating on $\dd{\de B}$, such that the former nonlinearity is uniformly summable and the latter is not.\\
Precisely, the result we obtained is the following:
\begin{teo}
\label{teo:funodue}$\dd{}$\\
Let $\dd{f_1,f_2\in C(\mb R,\mb R)}$ be even, positive, nondecreasing on $\dd{[0,+\infty)}$ and satisfying
\begin{enumerate}
\item $\dd{f_1(u)\le C\Phi^\alpha(u)\text{ for some }C>0,\alpha<\alpha_N}$
where
$$\Phi^\alpha(u):=e^{\alpha|u|^\frac{N}{N-1}}-\sum_{j=0}^{N-2}\frac{\alpha^j|u|^\frac{jN}{N-1}}{j!}=\Phi\left(\left(\frac{\alpha}{\alpha_N}\right)^\frac{N-1}Nu\right).$$
\item We have
$$\frac{f_1(u)}{f_2(u)}\underset{u\to+\infty}\to0.$$
\item We have $$\liminf_{R\to0}\frac{R^N\ttilde{f_1}(R)}{\int_0^R\ttilde{f_1}(R)\rho^{N-1}d\rho}>0\quad\text{where}\quad\ttilde f(R):=f\left(\frac{\left(\log\frac{1}R\right)^\frac{N-1}N}{\omega_{N-1}^\frac{1}N}\right).$$
\end{enumerate}
Then, there exists a conformal metric $\dd{g_\rho=\rho(x)g_e}$ such that
$$\sup_{u\in W^{1,N}_0(B,g_\rho),\int_B|\nabla_{g_\rho}u(x)|^NdV_{g_\rho}(x)\le1}\int_Bf_1(u(x))dV_{g_\rho}(x)<+\infty$$
and
$$\sup_{u\in W^{1,N}_0(B,g_\rho),\int_B|\nabla_{g_\rho}u(x)|^NdV_{g_\rho}(x)\le1}\int_Bf_2(u(x))dV_{g_\rho}(x)=+\infty.$$
\end{teo}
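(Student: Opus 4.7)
The plan is to build $\rho$ as a train of tall, shrinking bumps accumulating at a single boundary point of $B$. Fix $\tilde x\in\partial B$ and pick sequences $x_k\to\tilde x$ in $B$ together with radii $r_k\to 0$ so that the balls $B_{r_k}(x_k)$ are pairwise disjoint and contained in $B$; for heights $M_k>1$ to be chosen, set
$$\rho(x):=1+\sum_{k\ge 1}(M_k-1)\chi_{B_{r_k}(x_k)}(x),$$
which makes $\rho^{N/2}$ equal to $M_k^{N/2}$ on the $k$-th bump and to $1$ elsewhere. Since the Dirichlet integral is conformally invariant, the admissible test functions for the metric $g_\rho$ are the usual Euclidean ones in $W^{1,N}_0(B)$.

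For the divergence of the $f_2$-functional, I would test with the usual Moser functions $u_k$ supported in $B_{r_k}(x_k)$, equal to $\omega_{N-1}^{-1/N}(\log 1/\epsilon_k)^{(N-1)/N}$ on the inner ball $B_{\epsilon_k r_k}(x_k)$ and logarithmic in the surrounding annulus, so that $\int_B|\nabla u_k|^N\,dx=1$. The inner ball lies inside the $k$-th bump, giving immediately
$$\int_B f_2(u_k)\,dV_{g_\rho}\ge M_k^{N/2}\,|B_{\epsilon_k r_k}|\,\tilde f_2(\epsilon_k)=\frac{\omega_{N-1}}{N}M_k^{N/2}(\epsilon_k r_k)^N\tilde f_2(\epsilon_k).$$

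For the uniform bound on the $f_1$-functional, I would split
$$\int_B f_1(u)\,dV_{g_\rho}=\int_B f_1(u)\,dx+\sum_k(M_k^{N/2}-1)\int_{B_{r_k}(x_k)}f_1(u)\,dx,$$
controlling the first integral by the classical Moser-Trudinger inequality on $B$, which applies thanks to the subcriticality from hypothesis 1. For each bump, Schwarz symmetrization around the origin combined with the Hardy-Littlewood rearrangement inequality yields $\int_{B_{r_k}(x_k)}f_1(u)\,dx\le\int_{B_{r_k}(0)}f_1(u^*)\,dx$, and Hölder's inequality applied to the radial derivative of $u^*$, in the same spirit as the proof of Lemma \ref{lemma:radial}, gives the pointwise envelope $u^*(r)\le\omega_{N-1}^{-1/N}(\log 1/r)^{(N-1)/N}$, whence $f_1(u^*(r))\le\tilde f_1(r)$. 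Integrating in polar coordinates and invoking hypothesis 3 then bounds the bump integral by $C_3 r_k^N\tilde f_1(r_k)$, so
$$\int_B f_1(u)\,dV_{g_\rho}\le C+C_3\sum_k M_k^{N/2}r_k^N\tilde f_1(r_k).$$

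The final step is to balance the parameters. Imposing $M_k^{N/2}r_k^N\tilde f_1(r_k)=2^{-k}$ makes the upper series convergent uniformly in $u$, while the lower bound becomes proportional to $2^{-k}\epsilon_k^N\tilde f_2(\epsilon_k)/\tilde f_1(r_k)$; choosing $\epsilon_k\to 0$ suitably slowly forces this quantity to tend to infinity, using hypothesis 2 (which translates into $\tilde f_2/\tilde f_1\to\infty$ as the argument tends to $0$). The main obstacle is obtaining the sharp envelope on $u^*$ and combining it with the tight integral estimate from hypothesis 3: this is precisely what shrinks the upper contribution of each bump enough to leave room — via the separation $\tilde f_2\gg\tilde f_1$ given by hypothesis 2 — for the lower sequence to blow up.
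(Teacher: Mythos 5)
Your construction fails at the final balancing step, and the failure is structural. With Euclidean bumps $B_{r_k}(x_k)$ packed disjointly into $B$, the radii must satisfy $\sum_k r_k^N<+\infty$, so $r_k\to0$, and this forced shrinkage of the plateau is what kills the divergence of the $f_2$-functional. Concretely, after imposing $M_k^{N/2}r_k^N\ttilde f_1(r_k)=2^{-k}$, your lower estimate for the $k$-th test function reads
$$\frac{\omega_{N-1}}{N}\,\frac{2^{-k}\,\epsi_k^N\,\ttilde f_2(\epsi_k)}{\ttilde f_1(r_k)},$$
and this tends to $0$, not $+\infty$, for every $f_2\le C\Phi$ --- i.e.\ for every $f_2$ needed in Theorem~\ref{teo:esisteg}. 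Indeed the leading term of $\ttilde\Phi(R)$ is $R^{-N}$, so $\epsi^N\ttilde f_2(\epsi)$ stays bounded as $\epsi\to0$, while $2^{-k}/\ttilde f_1(r_k)\to0$. The underlying problem is that hypothesis~2 only gives $\ttilde f_2(R)/\ttilde f_1(R)\to\infty$ with a \emph{common} argument $R$, whereas your bound compares $\ttilde f_2$ at the aspect ratio $\epsi_k$ with $\ttilde f_1$ at the bump radius $r_k$; these two parameters are decoupled. Matching them by $\epsi_k=r_k$ recovers $\ttilde f_2(r_k)/\ttilde f_1(r_k)\to\infty$, but then the uncompensated prefactor $2^{-k}r_k^N\to0$ remains. (The obstacle is not, as you suggest, the sharp envelope on $u^*$: that part is correct.)

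The paper avoids this by building the metric as $g_\rho=\left(1+\sum_k\eta_k\right)^{2/N}g_h$, a perturbation of the \emph{hyperbolic} background, with $\eta_k$ of height $a_k$ supported on the M\"obius images $\pphi_{x_k}(B_{R_k}(0))$, and by testing with $u_k=w_k\circ\pphi_{x_k}^{-1}$, where $w_k$ is the Moser profile with plateau on $B_{R_k/2}(0)$. Since $\pphi_{x_k}$ is a hyperbolic isometry, the plateau and its image have the same hyperbolic volume $\sim R_k^N$ no matter how close $x_k$ is to $\de B$: the single parameter $R_k$ then controls both the plateau height and the plateau volume, the lower bound is $\frac{\omega_{N-1}}{2N}a_kR_k^N\ttilde f_2(R_k)$ with the same $R_k$ appearing in the upper sum $\sum_k a_kR_k^N\ttilde f_1(R_k)$, and the choice $a_k=\left(R_k^N\sqrt{\ttilde f_1(R_k)\ttilde f_2(R_k)}\right)^{-1}$ cleanly isolates the ratio $\sqrt{\ttilde f_2(R_k)/\ttilde f_1(R_k)}\to\infty$. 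In the Euclidean picture there is no analogue of this non-shrinkage, and the extra $r_k^N$ cannot be absorbed. The rest of your argument --- the split of the $f_1$-integral into background plus bumps, Hardy--Littlewood with the radial envelope from Lemma~\ref{lemma:radialip}, and hypothesis~3 to bound $\int_0^{r}\ttilde f_1(\rho)\rho^{N-1}d\rho$ by a multiple of $r^N\ttilde f_1(r)$ --- is sound and parallels the paper; it is the geometric scaffolding (Euclidean bumps and translations in place of hyperbolic bumps and M\"obius maps) that must be replaced.
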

\begin{oss}$\dd{}$\\
Notice that if we choose the functions $\dd{f_i}$'s such that $\dd{f_1(u)\ge C|u|^N}$ and $\dd{f_2(u)\le C\Phi(u)}$ we get that Poincaré's inequality holds whereas the Moser-Trudinger inequality does not, thus showing that Theorems $\dd{\ref{teo:mansand}}$ and $\dd{\ref{teo:gabri}}$ are no longer true in this setting.
\end{oss}
To prove Theorem $\dd{\ref{teo:funodue}}$, we will use a radial estimate which plays a similar role of Lemma $\dd{\ref{lemma:radial}}$; the following Lemma is by Tintarev \cite{tintarev}, who used it to give another proof of the hyperbolic Moser-Trudinger inequality.
\begin{lemma}
\label{lemma:radialip}$\dd{}$\\
Let $\dd{u\in W^{1,N}_0(B,g_\rho)}$ be a radially nonincreasing function. Then
$$u(x)\le\frac{\|\nabla_{g_\rho}u\|_{L^N(B,g_\rho)}}{\omega_{N-1}^\frac{1}N}\left(\log\frac{1}{|x|}\right)^\frac{N-1}N.$$
\end{lemma}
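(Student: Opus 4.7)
The plan is to exploit the conformal invariance of the $N$-Dirichlet integral to reduce the estimate to an elementary Euclidean radial computation. Since $g_\rho = \rho g_e$ on $B \subset \mb R^N$, a direct calculation gives $|\nabla_{g_\rho} u|^N \, dV_{g_\rho} = |\nabla u|^N \, dx$, because the factor $\rho^{N/2}$ from $dV_{g_\rho}$ exactly cancels the factor $\rho^{-N/2}$ coming from $|\nabla_{g_\rho} u|^N = \rho^{-N/2} |\nabla u|^N$. Hence
$$\|\nabla_{g_\rho} u\|_{L^N(B,g_\rho)}^N = \int_B |\nabla u(x)|^N \, dx,$$
and it suffices to prove the inequality with the ordinary Euclidean Dirichlet norm on the right-hand side.

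Next, since $u$ is radial, write $u(x) = U(|x|)$, where $U:[0,1]\to[0,+\infty)$ is nonincreasing and satisfies $U(1)=0$ (because $u \in W^{1,N}_0$). For $r \in (0,1)$ I would then write
$$U(r) = -\int_r^1 U'(s)\, ds \le \int_r^1 |U'(s)|\, ds,$$
and factor the integrand as $|U'(s)| s^{(N-1)/N} \cdot s^{-(N-1)/N}$ in order to apply H\"older's inequality with conjugate exponents $N$ and $N/(N-1)$:
$$U(r) \le \left(\int_r^1 |U'(s)|^N s^{N-1}\, ds\right)^{1/N} \left(\int_r^1 \frac{ds}{s}\right)^{(N-1)/N}.$$
The first factor is controlled by the full Dirichlet integral via the radial change of variables,
$$\int_0^1 |U'(s)|^N s^{N-1}\, ds = \frac{1}{\omega_{N-1}} \int_B |\nabla u(x)|^N\, dx,$$
and the second factor equals $(\log(1/r))^{(N-1)/N}$.

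Combining these two ingredients yields exactly the claimed bound. There is no real obstacle here: the only nontrivial observation is the conformal invariance of the $N$-energy, which turns a hyperbolic-looking statement into a standard one-dimensional H\"older estimate. Care is only needed to justify writing $U(r) = -\int_r^1 U'(s)\, ds$, which follows from the absolute continuity on rays of a Sobolev function together with the vanishing boundary trace $U(1)=0$.
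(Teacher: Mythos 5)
Your proof is correct and follows essentially the same route as the paper's: write $U(r)=-\int_r^1 U'(s)\,ds$, insert the factors $s^{(N-1)/N}\cdot s^{-(N-1)/N}$, apply H\"older with exponents $N$ and $N/(N-1)$, and enlarge the first factor to the full Dirichlet integral. The only difference is that you spell out the conformal invariance $|\nabla_{g_\rho}u|^N\,dV_{g_\rho}=|\nabla u|^N\,dx$ which the paper uses implicitly in its final identity; this is a welcome clarification but not a different argument.
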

\begin{proof}$\dd{}$\\
If $\dd{u(x)=U(|x|)}$, then
$$U(r)=\left|-\int_r^1U'(\rho)d\rho\right|\le\left(\int_r^1|U'(\rho)|^N\rho^{N-1}d\rho\right)^\frac{1}N\left(\int_r^1\frac{d\rho}\rho\right)^\frac{N-1}N\le$$
$$\le\left(\int_0^1|U'(\rho)|^N\rho^{N-1}d\rho\right)^\frac{1}N\left(\int_r^1\frac{d\rho}\rho\right)^\frac{N-1}N=\frac{\|\nabla_{g_\rho}u\|_{L^N(B,g_\rho)}}{\omega_{N-1}^\frac{1}N}\left(\log\frac{1}r\right)^\frac{N-1}N.$$
\end{proof}
\begin{proof}[Proof of Theorem $\dd{\ref{teo:funodue}}$]$\dd{}$\\
The metric will be built as $$g_\rho=\rho(x)g_e=\zeta(x)g_h=\left(1+\sum_{k=1}^{+\infty}\eta_k(x)\right)^\frac{2}Ng_h,$$
with
\begin{enumerate}
\item $\dd{\eta_k\in C^\infty_0(B_k)\text{ for }B_k:=\pphi_{x_k}(B_{R_k}(0))}$
\item $\dd{0\le\eta_k\le a_k}$
\item $\dd{\eta_k\ge\frac{a_k}2\text{ on }A_k:=\pphi_{x_k}\left(B_{\frac{R_k}2}(0)\right)}$
\end{enumerate}
where the $\dd{\pphi_{x_k}}$'s are as in Lemma $\dd{\ref{lemma:phi}}$, the $\dd{x_k}$ are taken such that $\dd{\spt(\eta_k)}$ are pairwise disjoint, for instance
$$|x_k|\ge\frac{R_k+R_{k-1}+|x_{k-1}|+R_kR_{k-1}|x_{k-1}|}{1+R_kR_{k-1}+R_k|x_{k-1}|+R_{k-1}|x_{k-1}|}$$
and $\dd{a_k\underset{k\to+\infty}\nearrow+\infty}$, $\dd{R_k\underset{k\to+\infty}\searrow0}$ as $\dd{k\to+\infty}$ will be chosen later in dependence of $\dd{f_1,f_2}$.\\
For any function $\dd{u\in W^{1,N}_0(B,g_\rho)}$ satisfying $\dd{\int_B|\nabla_{g_\rho}u(x)|^NdV_{g_\rho}(x)\le1}$ we set $\dd{v_k:=u\circ\pphi_{x_k}}$ and denote as $\dd{v_k^*}$ its Schwarz symmetrization. Moser-Trudinger inequality on the hyperbolic disc, Lemma $\dd{\ref{lemma:radialip}}$ and the properties of $\dd{f_1}$ yield
$$\int_Bf_1(u(x))dV_{g_\rho}(x)=\int_Bf_1(u(x))dV_{g_h}(x)+\sum_{k=1}^{+\infty}\int_{B_k}f_1(u(x))\eta_k(x)dV_{g_h}(x)\le$$
$$\le C+\sum_{k=1}^{+\infty}\int_{B_{R_k}(0)}f_1(v_k(y))\eta_k(\pphi_{x_k}(y))dV_{g_h}(y)\le$$
$$\le C+\sum_{k=1}^{+\infty}a_k\left(\frac{2}{1-R_k^2}\right)^N\int_{B_{R_k}(0)}f_1(v_k(y))dV_{g_e}(y)\le$$
$$\le C+\left(\frac{2}{1-R_1^2}\right)^N\sum_{k=1}^{+\infty}a_k\int_{B_{R_k}(0)}f_1\left(\left|v_k\right|^*(y)\right)dV_{g_e}(y)\le$$
$$\le C+C_1\sum_{k=1}^{+\infty}a_k\int_{B_{R_k}(0)}\ttilde{f_1}(|y|)dV_{g_e}(y)=$$
\begin{equation}
\label{eq:funo}
=C+C_1\omega_{N-1}\sum_{k=1}^{+\infty}a_k\int_0^{R_k}\ttilde{f_1}(\rho)\rho^{N-1}d\rho\le C_2+C_3\sum_{k=1}^{+\infty}a_kR_k^N\ttilde{f_1}(R_k).
\end{equation}
On the other hand, for $\dd{f_2}$, one takes a sequence of Moser functions
$$w_k(x)=\frac{1}{\omega_{N-1}^\frac{1}N}\left\{\begin{array}{ll}\left(\log\frac{1}{R_k}\right)^\frac{N-1}N&\text{if }|x|<\frac{R_k}2\\\frac{\log\frac{1}{2|x|}}{\left(\log\frac{1}{R_k}\right)^\frac{1}N}&\text{if }\frac{R_k}2<|x|\le\frac{1}2\\0&\text{if }|x|>\frac{1}2\end{array}\right.$$
and evaluates $\dd{f_2}$'s integral on $\dd{u_k:=w_k\circ\pphi_{x_k}^{-1}}$:
conformal invariance gives
$$\int_B|\nabla_{g_\rho}u_k(x)|^NdV_{g_\rho}(x)=\int_B|\nabla_{g_\rho}w_k(y)|^NdV_{g_\rho}(y)=1$$
whereas the fact that $\dd{\pphi_{x_k}}$ preserves hyperbolic measure and the construction of $\dd{\eta_k}$ allow to write
$$\int_Bf_2(u_k(x))dV_{g_\rho}(x)=\int_Bf_2(w_k(y))\zeta(\pphi_{x_k}(y))^\frac{N}2dV_{g_h}(y)\ge$$
$$\ge\int_{B_\frac{R_k}2(0)}f_2(w_k(y))\left(\zeta(\pphi_{x_k}(y))^\frac{N}2-1\right)dV_{g_h}(y)\ge$$
\begin{equation}
\label{eq:fdue}
\ge2^N\frac{a_k}2\int_{B_\frac{R_k}2(0)}f_2(w_k(y))dV_{g_e}(y)=\frac{\omega_{N-1}}{2N}a_kR_k^N\ttilde{f_2}(R_k).
\end{equation}
Taking
$$a_k=\frac{1}{R_k^N\sqrt{\ttilde{f_1}(R_k)\ttilde{f_2}(R_k)}}$$
the quantity $\dd{\eqref{eq:fdue}}$ becomes
$$\frac{\omega_{N-1}}{2N}\sqrt{\frac{\ttilde{f_2}(R_k)}{\ttilde{f_1}(R_k)}}\underset{k\to+\infty}\to+\infty$$
whereas the addend in the sum $\dd{\eqref{eq:funo}}$ becomes
$$\sqrt{\frac{\ttilde{f_1}(R_k)}{\ttilde{f_2}(R_k)}}\underset{k\to+\infty}\to0$$
that is summable if $\dd{R_k}$ is chosen properly.
\end{proof}
The last condition in the statement of Theorem $\dd{\ref{teo:funodue}}$ seems tricky but it is actually satisfied by most elementary functions which satisfy the limitation on the growth.\\
The following result is a simple calculus exercise:
\begin{prop}
\label{prop:funodue}$\dd{}$\\
For any $\dd{p\ge N}$, $\dd{\alpha\in(0,\alpha_N)}$, $\dd{\beta>0}$, $\dd{q\in\left(0,\frac{N}{N-1}\right)}$, the following functions satisfy the third condition of Theorem $\dd{\ref{teo:funodue}}$:
$$f_1(u)=\max\left\{|u|^N,|u|^p\right\},\quad f_1(u)=\max\left\{|u|^N,\frac{|u|^p}{1+\log(1+|u|)}\right\},$$
$$f_1(u)=e^{\beta|u|^q}-\sum_{j=0}^{\left[\frac{N}q\right]}\frac{\beta^j|u|^{jq}}{j!},\quad f_1(u)=\Phi^\alpha(u),\quad f_1(u)=\frac{\Phi^\alpha(u)}{1+|u|}.$$
\end{prop}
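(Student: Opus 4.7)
The condition to be verified reads (understanding $\ttilde{f_1}(\rho)$ inside the integral, as it is actually used in \eqref{eq:funo})
$$\liminf_{R\to 0^+}\frac{R^N\ttilde{f_1}(R)}{\int_0^R\ttilde{f_1}(\rho)\,\rho^{N-1}d\rho}>0,$$
and my plan is to recast it as a statement about the growth of $f_1$ at infinity. Setting $R=e^{-t}$, $\rho=e^{-s}$ and introducing
$$h(t):=f_1\!\left(\omega_{N-1}^{-1/N}\,t^{(N-1)/N}\right),$$
the condition becomes
$$\liminf_{t\to+\infty}\frac{h(t)\,e^{-Nt}}{\int_t^{+\infty}h(s)\,e^{-Ns}\,ds}>0,$$
so only the behaviour of $f_1(u)$ for $u\to+\infty$ plays any role.

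The core of the argument is then a one-line elementary criterion: if $h$ is non-decreasing and there exist $T\ge 0$, $C_0>0$ and $\mu\in[0,N)$ such that $h(s)\le C_0\,h(t)\,e^{\mu(s-t)}$ for all $s\ge t\ge T$, then
$$\int_t^{+\infty}h(s)\,e^{-Ns}\,ds\le C_0\,h(t)\,e^{-\mu t}\int_t^{+\infty}e^{-(N-\mu)s}\,ds=\frac{C_0}{N-\mu}\,h(t)\,e^{-Nt},$$
giving the uniform lower bound $(N-\mu)/C_0>0$. It therefore suffices to establish this exponential comparison for each of the five concrete choices of $f_1$.

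I would then group the five examples into three asymptotic families. For the two max-type functions, $h(t)$ is eventually sandwiched between constant multiples of $t^{p(N-1)/N}$ (with an additional $1/\log t$ factor in the damped variant), so $h(s)/h(t)$ is at most a power of $s/t$ and is bounded by $e^{\varepsilon(s-t)}$ for any $\varepsilon>0$ once $t$ is large (using $\log(s/t)\le(s-t)/t$). For the sub-exponential case $f_1(u)=e^{\beta|u|^q}-\ldots$ with $q<N/(N-1)$, one finds $h(t)\sim e^{ct^\gamma}$ with $\gamma=q(N-1)/N<1$, and the concavity estimate $s^\gamma-t^\gamma\le\gamma\,t^{\gamma-1}(s-t)$ yields $h(s)/h(t)\le e^{\varepsilon(s-t)}$ for $t$ large. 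Finally, for $f_1=\Phi^\alpha$ and $f_1=\Phi^\alpha/(1+|u|)$, the identity $\alpha u^{N/(N-1)}=(N\alpha/\alpha_N)\,t$ (using $\alpha_N=N\omega_{N-1}^{1/(N-1)}$) gives a genuine exponential $h(t)\asymp e^{\mu t}$ (with an extra $t^{-(N-1)/N}$ factor in the second case) for $\mu=N\alpha/\alpha_N$, and the hypothesis $\alpha<\alpha_N$ forces $\mu<N$.

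The only point that is not routine is precisely this last family, where $\mu$ sits strictly below but close to the threshold $N$: there the criterion genuinely requires a two-sided bound on $h$, so one must track the subleading terms in the series defining $\Phi^\alpha$, and the $1/(1+|u|)$ factor in the second variant, in order to get a matching exponential lower bound. Once this is handled, the remaining three families follow at once from the criterion with $\mu$ taken arbitrarily small.
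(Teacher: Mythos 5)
Your proof is correct, and it takes a genuinely different route from the paper's. The paper observes that both $R^N\ttilde f_1(R)$ and $\int_0^R\ttilde f_1(\rho)\rho^{N-1}d\rho$ vanish as $R\to0$, applies l'H\^opital's rule, and after a change of variables reduces the $\liminf$ to the single closed-form quantity
$$N-\frac{N-1}{\alpha_N}\lim_{u\to+\infty}\frac{f_1'(u)}{u^{\frac{1}{N-1}}f_1(u)},$$
which one then checks is positive for each example (for $\Phi^\alpha$ the last limit is $\tfrac{N\alpha}{N-1}$, giving $N(1-\alpha/\alpha_N)>0$). Your argument instead substitutes $R=e^{-t}$, rewrites the condition as $\liminf_{t\to\infty}h(t)e^{-Nt}/\int_t^{\infty}h(s)e^{-Ns}ds>0$, and establishes an elementary sub-exponential comparison criterion $h(s)\le C_0h(t)e^{\mu(s-t)}$ with $\mu<N$ that bounds the denominator by $\tfrac{C_0}{N-\mu}h(t)e^{-Nt}$. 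This bypasses l'H\^opital entirely: it does not require $f_1$ to be differentiable (convenient for the two $\max$ examples, where the paper's computation is only valid once one restricts to large $u$ where the $\max$ is resolved), and it makes transparent that for the first three families any $\mu>0$ works while for $\Phi^\alpha$ the critical exponent is exactly $\mu_0=N\alpha/\alpha_N<N$. One small remark: for the $\Phi^\alpha$ family the ``two-sided bound'' you flag as nonroutine is in fact immediate, since $\Phi^\alpha(u)/e^{\alpha|u|^{N/(N-1)}}\to1$ (the subtracted terms are polynomial against an exponential), so $h(t)e^{-\mu_0 t}\to\text{const}>0$; no detailed tracking of subleading terms is required. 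Both proofs also correctly read the third hypothesis of Theorem \ref{teo:funodue} with $\ttilde f_1(\rho)$ (not $\ttilde f_1(R)$) inside the integral, consistent with its use in \eqref{eq:funo}. The paper's l'H\^opital formula is tidier if one wants a single criterion on $f_1'/f_1$; your approach is more elementary and slightly more robust.
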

\begin{proof}$\dd{}$\\
Notice that the third hypothesis of Theorem $\dd{\ref{teo:funodue}}$ only depends on the behavior of $\dd{f_1}$ around infinity.\\
One can easily see that both $\dd{R^N\ttilde{f_1}(R)}$ and $\dd{\int_0^R\ttilde{f_1}(R)\rho^{N-1}d\rho}$ tend to $\dd{0}$ as $\dd{R\to0}$, hence the $\dd{\liminf}$ can be computed as a limit using l'H\^opital's rule and a change of variable:
$$\lim_{R\to0}\frac{R^N\ttilde f_1(R)}{\int_0^R\ttilde f_1(\rho)\rho^{N-1}d\rho}=\lim_{R\to0}\frac{NR^{N-1}\ttilde f_1(R)+R^N{\ttilde f_1}'(R)}{R^{N-1}\ttilde f_1(R)}=N+\lim_{R\to0}\frac{R{\ttilde f_1}'(R)}{\ttilde f_1(R)}=$$
$$=N+\lim_{R\to0}\frac{Rf_1'\left(\frac{\left(\log\frac{1}R\right)^\frac{N-1}N}{\omega_{N-1}^\frac{1}N}\right)}{f_1\left(\frac{\left(\log\frac{1}R\right)^\frac{N-1}N}{\omega_{N-1}^\frac{1}N}\right)}\left(-\frac{N-1}N\frac{1}R\left(\frac{\left(\log\frac{1}R\right)^\frac{-1}N}{\omega_{N-1}^\frac{1}N}\right)\right)=$$
$$=N-\frac{N-1}{\alpha_N}\lim_{u\to+\infty}\frac{{f_1}'(u)}{u^\frac{1}{N-1}f_1(u)}.$$
The existence and positiveness of the last limit is a simple calculation.
\end{proof}
\begin{teo}
\label{teo:esisteg}$\dd{}$\\
For any $\dd{p>N,\alpha\in(0,\alpha_N)}$ there exists a metric $\dd{g_\rho=\rho(x)g_e}$ that satisfies one of the following conditions:
\begin{enumerate}
\item For all $\dd{q>N}$,
$$\sup_{u\in W^{1,N}_0(B,g_\rho),\int_B|\nabla_{g_\rho}u(x)|^NdV_{g_\rho}(x)\le1}\int_B|u(x)|^qdV_{g_\rho}(x)=+\infty.$$
\item We have
$$\sup_{u\in W^{1,N}_0(B,g_\rho),\int_B|\nabla_{g_\rho}u(x)|^NdV_{g_\rho}(x)\le1}\int_B|u(x)|^rdV_{g_\rho}(x)<+\infty$$
if and only if $\dd{r\in[N,p)}$.
\item We have
$$\sup_{u\in W^{1,N}_0(B,g_\rho),\int_B|\nabla_{g_\rho}u(x)|^NdV_{g_\rho}(x)\le1}\int_B|u(x)|^rdV_{g_\rho}(x)<+\infty$$
if and only if $\dd{r\in[N,p]}$.
\item For all $\dd{r\ge N}$,
$$\sup_{u\in W^{1,N}_0(B,g_\rho),\int_B|\nabla_{g_\rho}u(x)|^NdV_{g_\rho}(x)\le1}\int_B|u(x)|^rdV_{g_\rho}(x)<+\infty\;\forall\;r\ge$$,
and for all $\dd{\beta>0}$,
$$\sup_{u\in W^{1,N}_0(B,g_\rho),\int_B|\nabla_{g_\rho}u(x)|^NdV_{g_\rho}(x)\le1}\int_B\Phi^\beta(u(x))dV_{g_\rho}(x)=+\infty$$
\item We have
$$\sup_{u\in W^{1,N}_0(B,g_\rho),\int_B|\nabla_{g_\rho}u(x)|^NdV_{g_\rho}(x)\le1}\int_B\Phi^\gamma(u(x))dV_{g_\rho}(x)<+\infty$$
if and only if $\dd{\gamma\in(0,\alpha)}$.
\item We have
$$\sup_{u\in W^{1,N}_0(B,g_\rho),\int_B|\nabla_{g_\rho}u(x)|^NdV_{g_\rho}(x)\le1}\int_B\Phi^\gamma(u(x))dV_{g_\rho}(x)<+\infty$$
if and only if $\dd{\gamma\in(0,\alpha]}$.
\item We have
$$\sup_{u\in W^{1,N}_0(B,g_\rho),\int_B|\nabla_{g_\rho}u(x)|^NdV_{g_\rho}(x)\le1}\int_B\Phi^\gamma(u(x))dV_{g_\rho}(x)<+\infty$$
if and only if $\dd{\gamma\in(0,\alpha_N)}$.
\end{enumerate}
\end{teo}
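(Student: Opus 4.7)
The plan is to reduce each of the seven statements to an application (or a light adaptation) of Theorem \ref{teo:funodue}, choosing the pair $(f_1,f_2)$ from the list in Proposition \ref{prop:funodue} so that the dichotomy $\sup\int f_1<+\infty$ versus $\sup\int f_2=+\infty$ captures exactly the threshold behaviour of the case at hand. For cases 2, 3, 5, 6 one picks two adjacent entries from the Proposition's list: $f_1(u)=\max\{|u|^N,|u|^p/(1+\log(1+|u|))\}$ and $f_2(u)=|u|^p$ (case 2); $f_1(u)=\max\{|u|^N,|u|^p\}$ and $f_2(u)=|u|^p(1+\log(1+|u|))$ (case 3); $f_1(u)=\Phi^\alpha(u)/(1+|u|)$ and $f_2(u)=\Phi^\alpha(u)$ (case 5); $f_1(u)=\Phi^\alpha(u)$ and $f_2(u)=(1+|u|)\Phi^\alpha(u)$ (case 6). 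In each such pair $f_1$ already dominates $|u|^N$ pointwise, so Poincar\'e comes for free, and the pointwise comparisons $|u|^r\le C_r f_1$ (for the correct range of $r$) and $f_2\le C_r|u|^r$ (for the complementary range), together with the monotonicity of $\Phi^\gamma$ in $\gamma$, transfer the dichotomy from $(f_1,f_2)$ to the family of interest.

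Cases 1 and 4 need a single metric handling an \emph{entire family} of exponents. For case 1 I pair $f_1(u)=|u|^N$ with $f_2(u)=|u|^N\log(|u|+e)$: the elementary estimate $|u|^N\log(|u|+e)\le C_\epsilon|u|^{N+\epsilon}+C'|u|^N$, valid for every $\epsilon>0$, combined with Poincar\'e, shows that a finite $\sup\int|u|^q$ for any $q>N$ would contradict the unboundedness of $\sup\int f_2$ given by Theorem \ref{teo:funodue}. Case 4 follows the same pattern with $f_1(u)=e^{|u|^{q_1}}-\sum_{j=0}^{[N/q_1]}|u|^{jq_1}/j!$ (which dominates every polynomial at infinity) and $f_2(u)=e^{|u|^{q_2}}-\sum_{j=0}^{[N/q_2]}|u|^{jq_2}/j!$ with $q_1<q_2<\frac{N}{N-1}$ (which is dominated at infinity by every $\Phi^\beta$, $\beta>0$); the Poincar\'e bound needed for the translation is recovered by applying the same upper-bound argument \eqref{eq:funo} to $|u|^N$ in place of $f_1$.

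Case 7 is the only one that cannot follow from a single black-box application of Theorem \ref{teo:funodue}, because no $f_1$ satisfying hypothesis (1) dominates every $\Phi^\gamma$ with $\gamma$ arbitrarily close to $\alpha_N$. Here the plan is to rerun the construction of Theorem \ref{teo:funodue} directly, with tailor-made parameters $a_k=k$ and $R_k=e^{-k^2}$. The upper bound \eqref{eq:funo} uses only hypotheses (1) and (3) on $f_1$, so it applies to $f_1=\Phi^\gamma$ for every fixed $\gamma<\alpha_N$ and, together with the asymptotic $\ttilde{\Phi^\gamma}(R)\sim R^{-N\gamma/\alpha_N}$ as $R\to0$, reduces the boundedness of $\int_B\Phi^\gamma(u)\,dV_{g_\rho}$ to the summability of
$$\sum_{k=1}^{+\infty}a_kR_k^{N(1-\gamma/\alpha_N)}=\sum_{k=1}^{+\infty}ke^{-k^2N(1-\gamma/\alpha_N)},$$
which holds for every $\gamma<\alpha_N$; at $\gamma=\alpha_N$ the Moser test functions of \eqref{eq:fdue} give a contribution of order $a_k\to+\infty$, so $\sup\int\Phi^{\alpha_N}=+\infty$.

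The main obstacle is case 7, and more generally the uniformity required over a continuum of exponents: one must verify that a single choice of $(a_k,R_k)$ yields convergence of the upper-bound series for every subcritical $\gamma$ and divergence of the lower-bound sequence at the critical one. A secondary nuisance, present in case 4, is that the natural $f_1$ vanishes at $u=0$ to an order strictly higher than $|u|^N$, so that Poincar\'e has to be checked by a separate summability argument alongside the one for $f_1$.
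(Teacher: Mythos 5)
Your proposal is correct and takes essentially the same route as the paper: cases 1--6 are reduced to Theorem~\ref{teo:funodue} with suitable pairs $(f_1,f_2)$ drawn from (variants of) the list in Proposition~\ref{prop:funodue}, and case 7 is handled by running the construction of Theorem~\ref{teo:funodue} directly with an explicit choice of $(a_k,R_k)$ after observing that hypothesis (3) fails for the natural envelope. The only small bookkeeping deviations are that in case 4 the paper takes $f_1(u)=e^{|u|}-\sum_{j=0}^{N-1}|u|^j/j!$, whose first nonvanishing term $|u|^N/N!$ makes the Poincar\'e bound $|u|^N\le N!\,f_1(u)$ global (so no extra step is needed), whereas your $f_1$ vanishes to order strictly higher than $N$ at the origin and, as you correctly flag, requires a separate application of the upper bound \eqref{eq:funo} to $|u|^N$; and in cases 1--3 the paper caps $f_2$ by $\max\{e^{-1/u},\,\cdot\,\}$ to obtain a global pointwise comparison $f_2\le C(q)|u|^q$, while you instead invoke the interpolation estimate $f_2\le C_\epsi|u|^{N+\epsi}+C'|u|^N$ together with Poincar\'e---equivalent in substance.
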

\begin{oss}$\dd{}$\\
Notice that for $\dd{\alpha\ge\alpha_N}$ Theorem $\dd{\ref{teo:conformi}}$ implies that, if $\dd{g_\rho}$ is as in the proof of Theorem $\dd{\ref{teo:funodue}}$,
$$\sup_{u\in W^{1,N}_0(B,g_\rho),\int_B|\nabla_{g_\rho}u(x)|^NdV_{g_\rho}(x)\le1}\int_B\Phi^\alpha(u(x))dV_{g_\rho}(x)=+\infty,$$
moreover, one cannot take $\dd{f_1(u)=|u|^p}$ for $\dd{p<N}$ because one does not have $\dd{|u|^p\le C\Phi^\alpha(u)}$ when $\dd{|u|}$ is small; actually, it holds
$$\sup_{u\in W^{1,N}_0(B,g_\rho),\int_B|\nabla_{g_h}u(x)|^NdV_{g_h}(x)\le1}\int_B|u(x)|^pdV_{g_h}(x)=+\infty$$
hence these powers are not even summable on the disc endowed with the metric $\dd{g_\rho}$.
\end{oss}
\begin{proof}$\dd{}$\\
All but the last point follow from applying Theorem $\dd{\ref{teo:funodue}}$ with suitable $\dd{f_1,f_2}$, that can be chosen as in Proposition $\dd{\ref{prop:funodue}}$:
\begin{enumerate}
\item It suffices to take
$$f_1(u)=|u|^N\quad\quad\text{and}\quad\quad f_2(u)=\max\left\{e^{-\frac{1}u},|u|^N\log|u|\right\}$$
since $\dd{|u|^q\ge C(q)f_2(u)}$ for any $\dd{q>N}$.
\item It suffices to take
$$f_1(u)=\max\left\{|u|^N,\frac{|u|^p}{1+\log(1+|u|)}\right\}\quad\text{and}\quad f_2(u)=\max\left\{e^{-\frac{1}u},|u|^p\right\}$$
since $\dd{|u|^r\le C(r)f_1(u)}$ for any $\dd{r\in[N,p)}$ and $\dd{|u|^q\ge C(q)f_2(u)}$ for any $\dd{q\ge p}$.
\item It suffices to take
$$f_1(u)=\max\left\{|u|^N,|u|^p\right\}\quad\quad\text{and}\quad\quad f_2(u)=\max\left\{e^{-\frac{1}u},|u|^p\log|u|\right\}$$
since $\dd{|u|^r\le C(r)f_1(u)}$ for any $\dd{r\in[N,p]}$ and $\dd{|u|^q\ge C(q)f_2(u)}$ for any $\dd{q>p}$.
\item It suffices to take
$$f_1(u)=e^{|u|}-\sum_{j=0}^{N-1}\frac{|u|^j}{j!}\quad\quad\text{and}\quad\quad f_2(u)=e^{2|u|}-\sum_{j=0}^{N-1}\frac{2^j|u|^j}{j!}$$
since $\dd{|u|^r\le C(r)f_1(u)}$ for any $\dd{r\ge N}$ and $\dd{\Phi^\beta(u)\ge C(\beta)f_2(u)}$ for any $\dd{\beta>0}$.
\item It suffices to take
$$f_1(u)=\frac{\Phi^\alpha(u)}{1+|u|}\quad\quad\text{and}\quad\quad f_2(u)=\Phi^\alpha(u)$$
since $\dd{\Phi^\gamma(u)\le C(\gamma)f_1(u)}$ for any $\dd{\gamma\in(0,\alpha)}$.
\item It suffices to take
$$f_1(u)=\Phi^\alpha(u)\quad\text{and}\quad f_2(u)=(1+|u|)\Phi^\alpha(u)$$
since $\dd{\Phi^\beta(u)\ge C(\beta)f_2(u)}$ for any $\dd{\beta>\alpha}$.
\item For the last point, it suffices to show a metric $\dd{g_\rho}$ such that
$$f_1(u)=\frac{\Phi(u)}{1+|u|^\frac{2N}{N-1}}$$
is uniformly integrable on $\dd{W^{1,N}_0(B,g_\rho)}$; however, $\dd{f_1}$ does not satisfy all the hypotheses of Theorem $\dd{\ref{teo:funodue}}$, since it has the same asymptotic behavior of
$$f_0(u)=\frac{e^{\alpha_N|u|^\frac{N}{N-1}}}{|u|^\frac{2N}{N-1}}$$
and, being $$\ttilde{f_0}(R)=\frac{1}{\omega_{N-1}^\frac{2}{N-1}R^N\left(\log\frac{1}R\right)^2},$$
one has
$$\lim_{R\to0}\frac{R^N\ttilde{f_0}(R)}{\int_0^R\ttilde{f_0}(\rho)\rho^{N-1}d\rho}=\lim_{R\to0}\frac{\frac{1}{\omega_{N-1}^\frac{2}{N-1}\left(\log\frac{1}R\right)^2}}{\int_0^R\frac{d\rho}{\omega_{N-1}^\frac{2}{N-1}\rho\left(\log\frac{1}\rho\right)^2}d\rho}=\lim_{R\to0}\frac{1}{\log\frac{1}R}=0.$$
However, one can argue as in the proof of Theorem $\dd{\ref{teo:funodue}}$, building the metric in the same way and choosing later $\dd{a_k}$ and $\dd{R_k}$:
$$\sup_{u\in W^{1,N}_0(B,g_\rho),\int_B|\nabla_{g_\rho}u(x)|^NdV_{g_\rho}(x)\le1}\int_Bf(u(x))dV_{g_\rho}(x)\le$$
$$\le C+C_2\sum_{k=1}^{+\infty}a_k\int_0^{R_k}\ttilde f(\rho)\rho^{N-1}d\rho\le$$
$$\le C+C_2\sum_{k=1}^{+\infty}a_k\int_0^{R_k}\ttilde{f_0}(\rho)\rho^{N-1}d\rho=C+C_2\sum_{k=1}^{+\infty}\frac{a_k}{\log\frac{1}{R_k}}$$
that converges choosing for instance $\dd{a_k=k}$ and $\dd{R_k=e^{-k^3}}$.
\end{enumerate}
\end{proof}

\section{Extremals for the Moser-Trudinger inequality on strips}
The last section is devoted to the problem of extremal functions for the Moser-Trudinger inequality.\\
As mentioned before, the existence of extremals has been already proved for any bounded	set $\dd{\Omega\subset\mb R^N}$:	
\begin{teo}
\label{teo:limitati}$\dd{}$\\
Let $\dd{\Omega\subset\mb R^N}$ be a bounded open domain.\\
Then, there exists a function $\dd{\ttilde u\in W^{1,N}_0(\Omega)}$ with $\dd{\int_\Omega|\nabla\ttilde u(x)|^Ndx\le1}$ and
$$\int_\Omega e^{\alpha_N|\ttilde u(x)|^\frac{N}{N-1}}dx=\sup_{u\in W^{1,N}_0(\Omega),\int_\Omega|\nabla u(x)|^Ndx\le1}\int_\Omega e^{\alpha_N|u(x)|^\frac{N}{N-1}}dx.$$
\end{teo}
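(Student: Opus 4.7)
The plan is to follow the concentration-compactness method of P.-L. Lions combined with the Carleson-Chang blow-up analysis. Set $S$ equal to the supremum in the statement and pick a maximizing sequence $u_k\in W^{1,N}_0(\Omega)$. Replacing $u_k$ by $|u_k|$ and then rescaling, we may assume $u_k\ge 0$ and $\int_\Omega|\nabla u_k|^N dx=1$, since the Moser-Trudinger integrand is monotone in $|u|$. The sequence is bounded in $W^{1,N}_0(\Omega)$, so up to subsequence $u_k\rightharpoonup\ttilde u$ weakly, $u_k\to\ttilde u$ strongly in $L^q(\Omega)$ for every finite $q$, and pointwise a.e., with $\int_\Omega|\nabla\ttilde u|^N dx\le 1$ by lower semicontinuity.

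The argument then splits on whether $\ttilde u\not\equiv 0$ or not. In the first case I would invoke the improved exponential integrability of Lions: for any $p$ with $1<p<\bigl(1-\int_\Omega|\nabla\ttilde u|^N dx\bigr)^{-1/(N-1)}$ (such a $p$ exists precisely because $\ttilde u\not\equiv 0$), the sequence $\{e^{\alpha_N p|u_k|^{N/(N-1)}}\}$ is bounded in $L^1(\Omega)$. This higher-integrability bound combined with a.e.\ convergence gives equi-integrability of the Moser-Trudinger integrands, so Vitali's theorem yields $\int_\Omega e^{\alpha_N|u_k|^{N/(N-1)}} dx\to\int_\Omega e^{\alpha_N|\ttilde u|^{N/(N-1)}} dx=S$; together with $\int_\Omega|\nabla\ttilde u|^N dx\le 1$ this exhibits $\ttilde u$ as an extremal.

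The real work is to exclude the concentration case $\ttilde u\equiv 0$. Here I would follow Carleson-Chang (when $\Omega$ is a ball) and Flucher and K.-C.\ Lin (for general $\Omega$). If $u_k\rightharpoonup 0$, then strong $L^q$ convergence kills every polynomial term in the Taylor expansion of $e^{\alpha_N u_k^{N/(N-1)}}$, and the concentration-compactness principle forces $|\nabla u_k|^N dx\rightharpoonup\delta_{x_0}$ for some $x_0\in\overline\Omega$. A blow-up rescaling centred at $x_0$, combined with a comparison of $u_k$ with its Schwarz-symmetrization on a small ball about $x_0$, leads to the sharp upper bound $\limsup_k\int_\Omega\bigl(e^{\alpha_N u_k^{N/(N-1)}}-1\bigr)dx\le|\Omega|\,e^{C_N}$ for an explicit constant $C_N$ (essentially the harmonic sum $\sum_{j=1}^{N-1}\frac{1}{j}$ arising from the one-dimensional ODE satisfied by the radial limit profile). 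To close the proof I would then construct an explicit competitor $v\in W^{1,N}_0(\Omega)$ with $\int_\Omega|\nabla v|^N dx=1$ and $\int_\Omega e^{\alpha_N v^{N/(N-1)}} dx>|\Omega|\bigl(1+e^{C_N}\bigr)$, namely a suitably translated, rescaled and truncated Moser-type bubble placed at an interior point of $\Omega$, so that $S$ strictly exceeds the concentration threshold. The main obstacle is exactly this last comparison: both the sharp Carleson-Chang upper bound and the strict inequality for the test function require delicate one-dimensional ODE analysis and capacity-type estimates for the $N$-Laplacian, and constitute the technical core of the Carleson-Chang/Flucher/K.-C.\ Lin proofs cited in the introduction.
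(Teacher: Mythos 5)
The paper does not prove Theorem \ref{teo:limitati}: it records it as a known result and cites Carleson--Chang \cite{carchang} for the ball, Flucher \cite{flucher} for general planar domains, and K.-C.\ Lin \cite{kclin} for $N\ge 3$. There is therefore no in-paper proof for your sketch to be measured against; I can only assess it as an outline of the cited arguments.

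The structure you lay out is the right one: normalize a maximizing sequence, extract a weak limit $\ttilde u$, use Lions' improved exponential estimate plus Vitali when $\ttilde u\not\equiv 0$, and quantify the concentration level when $\ttilde u\equiv 0$. One substantive correction is needed in the concentration branch. The bound $\limsup_k\int_\Omega\bigl(e^{\alpha_N u_k^{N/(N-1)}}-1\bigr)dx\le|\Omega|\,e^{C_N}$ and the requirement that a competitor exceed $|\Omega|\bigl(1+e^{C_N}\bigr)$ are both incorrect as stated for a general bounded domain. Flucher's sharp concentration threshold at a point $x_0\in\Omega$ is governed not by $|\Omega|$ but by the conformal radius (equivalently, the Robin function / harmonic radius) of $\Omega$ at $x_0$; for a thin, elongated domain this radius can be far smaller than the area would suggest, so the true concentration level lies well below $|\Omega|\,e^{C_N}$ and no admissible function need exceed $|\Omega|(1+e^{C_N})$. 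The closing comparison must be made locally: one shows that the Moser--Trudinger supremum on a ball of the appropriate conformal radius centred at the optimal $x_0$ strictly exceeds the concentration level there, transferring the Carleson--Chang strict inequality from the ball to $\Omega$ via the Riemann map (Flucher) or by a direct $N$-Laplacian capacity argument (Lin). You correctly identify this blow-up and comparison as the technical core; just replace the domain-volume threshold by the conformal-radius one, or the argument will fail on domains that are far from round.
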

This has been first proved by Carleson and Chang \cite{carchang}, when $\dd{\Omega=B_R(\ttilde x)}$ is a ball, then by Flucher \cite{flucher} for any planar domain, and finally by K.-C. Lin \cite{kclin} in the general case. Here, a first existence result is given for an unbounded domain, precisely $\dd{\Omega=\mb R\times(-1,1)}$:
\begin{teo}
\label{teo:striscia}$\dd{}$\\
If $\dd{\Omega=\mb R\times(-1,1)\subset\mb R^2}$, then there exists a function $\dd{\ttilde u\in H^1_0(\Omega)}$ such that\\
$\dd{\int_\Omega|\nabla u(x)|^2dx\le1}$ and
$$\int_\Omega\left(e^{4\pi{\ttilde u(x)}^2}-1\right)dx=\sup_{u\in H^1_0(\Omega),\int_\Omega|\nabla u(x)|^2dx\le1}\int_\Omega\left(e^{4\pi u(x)^2}-1\right)dx.$$
\end{teo}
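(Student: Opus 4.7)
The plan is to adapt the blow-up analysis of Carleson-Chang, Flucher and K.-C.~Lin to the unbounded setting, using the two axes of symmetry of $\Omega$ as a substitute for compactness. Start from a maximizing sequence $u_n\in H^1_0(\Omega)$ with $\int_\Omega|\nabla u_n|^2dx\le 1$. Replacing each $u_n$ by $|u_n|$ and performing Steiner symmetrization successively with respect to the lines $\{x_1=0\}$ and $\{x_2=0\}$, one may assume every $u_n$ is nonnegative, even in $x_1$ and nonincreasing in $|x_1|$, and even in $x_2$ and nonincreasing in $|x_2|$; the constraint is preserved, the Moser-Trudinger functional is unchanged, and the sequence is still maximizing. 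Pass to a weakly convergent subsequence $u_n\rightharpoonup\widetilde u$ in $H^1_0(\Omega)$, so that $\widetilde u$ inherits these symmetries.

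The first step is to rule out vanishing at infinity. Since $u_n(\cdot,x_2)$ is even and nonincreasing in $|x_1|$ for a.e.\ $x_2$, a one-dimensional argument in the spirit of Lemma \ref{lemma:radial} applied on horizontal slices yields a pointwise bound $u_n(x_1,x_2)\le C/\sqrt{|x_1|}$, uniform in $n$. Using the elementary inequality $e^{4\pi t^2}-1\le 4\pi t^2\,e^{4\pi t^2}$, Poincar\'e's inequality on $\Omega$, and this uniform decay, one sees that $\int_{\{|x_1|>R\}}\bigl(e^{4\pi u_n^2}-1\bigr)dx\to 0$ as $R\to+\infty$ uniformly in $n$. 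Thus the possibly nontrivial phenomena are confined to a bounded horizontal window, where Rellich compactness applies.

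The main obstacle is to exclude concentration of $|\nabla u_n|^2dx$. The imposed symmetries force any concentration point to lie on the segment $\{0\}\times[-1,1]$; combined with the tail estimate above, the family $\{|\nabla u_n|^2dx\}$ is tight in $\overline\Omega$, and up to a subsequence converges weakly-$*$ to a finite measure supported there. To handle concentration one transfers the problem to the disc through a Riemann map $\psi:B_1(0)\to\Omega$. The pullbacks $\widehat u_n=u_n\circ\psi$ lie in $H^1_0(B_1(0))$ with $\int_B|\nabla\widehat u_n|^2dy\le 1$ by conformal invariance of the Dirichlet integral in dimension two, and the change of variables gives
\[\int_\Omega\bigl(e^{4\pi u_n^2}-1\bigr)dx=\int_{B_1(0)}\bigl(e^{4\pi\widehat u_n^2}-1\bigr)|\psi'(y)|^2dy.\]
A local Carleson-Chang analysis of $\widehat u_n$ near each concentration point then yields an explicit ceiling $\Lambda(\Omega)$ for $\limsup_n\int_\Omega(e^{4\pi u_n^2}-1)dx$ in the concentrating case, expressed through the conformal radius of $\Omega$ at the concentration point.

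The key remaining, and hardest, ingredient is the strict inequality
\[\sup_{u\in H^1_0(\Omega),\int_\Omega|\nabla u|^2dx\le 1}\int_\Omega\bigl(e^{4\pi u^2}-1\bigr)dx\,>\,\Lambda(\Omega).\]
Following the strategy of Flucher and K.-C.~Lin, one constructs an explicit competitor by perturbing a normalized Moser bubble centered at the origin with a smooth correction encoding the regular part of the Green's function of $-\Delta$ on $\Omega$ at $(0,0)$, and checks by direct expansion that the resulting functional value exceeds $\Lambda(\Omega)$; the availability of such a correction with finite $H^1$-norm relies on $\lambda_1(\Omega)>0$, which holds for the strip. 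Once concentration is ruled out, along a subsequence $\widetilde u\not\equiv 0$ and $u_n\to\widetilde u$ strongly in $L^p_\loc(\Omega)$ for every $p<+\infty$. Combining this with the tail estimate from the second step, dominated convergence yields $\int_\Omega(e^{4\pi u_n^2}-1)dx\to\int_\Omega(e^{4\pi\widetilde u^2}-1)dx$, while weak lower semicontinuity of the Dirichlet integral gives $\int_\Omega|\nabla\widetilde u|^2dx\le 1$; hence $\widetilde u$ realizes the supremum.
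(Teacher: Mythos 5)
Your overall skeleton -- double Steiner symmetrization, decay estimates, the conformal map to the disc, and a concentration analysis -- does track the paper's, but there is a genuine and serious gap in how you propose to rule out vanishing, and the argument as written fails there.

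You claim that the pointwise decay $u_n(x_1,x_2)\le C/\sqrt{|x_1|}$, the bound $e^{4\pi t^2}-1\le 4\pi t^2 e^{4\pi t^2}$, and Poincar\'e's inequality together give
$$\int_{\{|x_1|>R\}}\bigl(e^{4\pi u_n^2}-1\bigr)\,dx\longrightarrow 0\quad\text{as }R\to\infty,\text{ uniformly in }n.$$
This is false. The decay bound makes $u_n$ uniformly small for $|x_1|$ large, so on the tail $e^{4\pi u_n^2}-1\le C\,u_n^2$. But Poincar\'e only controls $\int_\Omega u_n^2$ globally; it gives no uniform control of the tail $\int_{\{|x_1|>R\}}u_n^2$. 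A sequence of fixed bumps translated to infinity (the example the paper uses to illustrate vanishing) satisfies every one of your hypotheses -- symmetrized, unit Dirichlet energy, decaying in $|x_1|$ -- yet keeps its entire $L^2$-mass and functional value outside every fixed $\Omega_R$. What the decay bound does buy you (the paper's Corollary \ref{cor:stimastr}) is uniform integrability away from the origin of the remainder $e^{4\pi u_n^2}-1-4\pi u_n^2$; the quadratic part $4\pi u_n^2$ is exactly what can escape, and that is the vanishing phenomenon. Because of this your later conclusion that $\widetilde u\not\equiv 0$ once concentration is excluded also fails: vanishing produces $\widetilde u\equiv 0$ with no concentration.

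The paper's proof cannot dispense with a quantitative treatment of vanishing, and this is where most of its work is. It shows (Step 4) that in the vanishing regime the limit of the functional collapses to $4\pi\lim\int u_k^2\le 4\pi/\lambda_1(\Omega)$, and (Step 5) that the supremum strictly exceeds $4\pi/\lambda_1(\Omega)=16/\pi$, by computing $\lambda_1$ for the strip explicitly and testing with normalized eigenfunctions of large rectangles via Jensen's inequality. You need something of this kind; a decay estimate valid for every admissible function cannot by itself force tightness. The paper also handles the intermediate \emph{dichotomy} case (part of the mass concentrating at the origin, part escaping to infinity) as a separate Step 3; your proposal does not address it. Finally, for excluding concentration the paper uses the strict subharmonicity of $|\det D\psi|$ together with the strict mean value inequality applied to a radial extremal on the disc, which is shorter than the Flucher/Lin competitor construction you sketch, though either route should work for that step.
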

\begin{oss}$\dd{}$\\
The Moser-Trudinger inequality for $\dd{\Omega}$ follows from Theorem $\dd{\ref{teo:mansand}}$, since $\dd{r(\Omega)=1}$.
\end{oss}
A basic tool for the proof of Theorem $\dd{\ref{teo:limitati}}$ is the concentration-compactness principle by P.-L. Lions \cite{lions}; it states that a bounded sequence in $\dd{W^{1,N}_0(\Omega)}$ can either be compact for the Moser-Trudinger functional or concentrate at some point:
\begin{teo}
\label{teo:conccomp}$\dd{}$\\
Let $\dd{\Omega\subset\mb R^N}$ be a bounded open domain and let $\dd{u_k\in W^{1,N}_0(\Omega)}$ be a sequence satisfying $\dd{\int_\Omega|\nabla u_k(x)|^Ndx\le1}$.\\
Then, up to a subsequence, one of the following alternatives holds true:
\begin{enumerate}
\item As $\dd{k\to+\infty}$, $\dd{u_k\underset{k\to+\infty}\convdeb0}$ and $\dd{|\nabla u_k|^Ndx\underset{k\to+\infty}\convdeb\delta_{\ttilde x}}$ for some $\dd{\ttilde x\in\overline\Omega}$, and
$$\lim_{k\to+\infty}\int_{\Omega\backslash B_\epsi(\ttilde x)}\left(e^{\alpha_N|u_k(x)|^\frac{N}{N-1}}-1\right)dx=0\quad\forall\;\epsi>0.$$
\item As $\dd{k\to+\infty}$, $\dd{u_k\underset{k\to+\infty}\convdeb u}$ and $\dd{e^{\alpha_N|u_k|^\frac{N}{N-1}}\underset{k\to+\infty}\to e^{\alpha_N|u|^\frac{N}{N-1}}}$ in $\dd{L^1(\Omega)}$.
\end{enumerate}
The sequence $\dd{\{u_k\}}$ will be called ``concentrating'' at the point $\dd{\ttilde x}$  in the first case, and ``compact'' in the latter.
\end{teo}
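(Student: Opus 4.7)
The plan is to extract weak limits of $u_k$ and of the Dirichlet measures $|\nabla u_k|^N dx$, and then to dichotomize on whether the limit measure carries a Dirac of full mass $1$. The key mechanism throughout is an ``improved Moser-Trudinger'' observation: a Dirichlet integral bounded by $\theta < 1$ buys exponential integrability with constant $\alpha_N/\theta^{1/(N-1)} > \alpha_N$, and hence, whenever the Dirichlet energy does not concentrate at a single point with mass $1$, equi-integrability of $e^{\alpha_N |u_k|^{N/(N-1)}}$ follows.

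After passing to a subsequence one has $u_k \rightharpoonup u$ in $W^{1,N}_0(\Omega)$, $u_k \to u$ pointwise a.e.\ and in $L^N_{\loc}$ by Rellich, and $|\nabla u_k|^N dx \rightharpoonup \mu$ as Radon measures on $\overline{\Omega}$ with $\mu(\overline{\Omega}) \le 1$ and $\mu \ge |\nabla u|^N dx$. The crucial local step I would establish is: whenever $\mu(\{x_0\}) < 1$, one picks $r > 0$ with $\mu(\overline{B_r(x_0)}) \le \theta < 1$, so that $\int_{B_r(x_0) \cap \Omega} |\nabla u_k|^N dx \le \theta' < 1$ for $k$ large; multiplying $u_k$ by a smooth cutoff $\chi$ equal to $1$ on $B_{r/2}(x_0)$ and supported in $B_r(x_0)$, and using the $L^N_{\loc}$ convergence to control the commutator term $u_k \nabla\chi$, one obtains $\int |\nabla(\chi u_k)|^N dx \le \theta'' < 1$ eventually. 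Applying the classical Moser-Trudinger inequality on $B_r(x_0) \cap \Omega$ to $\chi u_k/(\theta'')^{(N-1)/N}$ then yields a uniform bound on $\int_{B_{r/2}(x_0) \cap \Omega} e^{p\alpha_N |u_k|^{N/(N-1)}} dx$ with $p = 1/(\theta'')^{1/(N-1)} > 1$, which by de la Vall\'ee Poussin is equi-integrability of $e^{\alpha_N |u_k|^{N/(N-1)}}$ near $x_0$.

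With this local estimate the dichotomy is immediate. If $\mu(\{x\}) < 1$ for every $x \in \overline{\Omega}$, a finite subcover of the compact set $\overline{\Omega}$ by such balls gives global equi-integrability, and Vitali's theorem together with $u_k \to u$ a.e.\ delivers the $L^1$ convergence of alternative 2. Otherwise some $\tilde x$ satisfies $\mu(\{\tilde x\}) \ge 1$, and $\mu(\overline{\Omega}) \le 1$ combined with $\mu \ge |\nabla u|^N dx$ forces $\mu = \delta_{\tilde x}$ and $u \equiv 0$, hence $u_k \rightharpoonup 0$. Away from $\tilde x$ the gradient mass $\int_{\Omega \setminus B_\epsi(\tilde x)}|\nabla u_k|^N dx$ vanishes, so the same improved Moser-Trudinger argument applied on $\Omega \setminus B_\epsi(\tilde x)$, now with an arbitrarily small normalization constant, together with $u_k \to 0$ a.e., yields the exponential decay required in alternative 1.

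The hard part will be the cutoff estimate: one must show that $\int|\nabla(\chi u_k)|^N dx$ stays strictly below $1$ after the multiplication, which is not obvious at the critical exponent. The point is that $u_k \nabla\chi \to u \nabla\chi$ in $L^N$ by Rellich, and this limiting quantity is genuinely small when $r$ is small, so an elementary $\epsi$-Young inequality of the form $(a+b)^N \le (1+\epsi)a^N + C_\epsi b^N$ absorbs it into the slack $1 - \theta'$. Boundary points $x_0 \in \de\Omega$ are handled by extending $u_k$ by zero before applying Rellich, exploiting that $u_k \in W^{1,N}_0(\Omega)$.
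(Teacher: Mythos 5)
First, a bookkeeping point: the paper does not prove Theorem \ref{teo:conccomp} at all; it is quoted as a known result of P.-L. Lions \cite{lions}, so there is no ``paper's proof'' to compare with. Your sketch is an attempt at reproving it from scratch, and it contains a genuine gap in the compactness alternative.

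The gap is in the cutoff estimate, exactly where you flag it as ``the hard part''. You claim that after Rellich the commutator term $u_k\nabla\chi\to u\nabla\chi$ in $L^N$ is ``genuinely small when $r$ is small'', and you rely on $\|u\nabla\chi_r\|_{L^N}$ being small to absorb the $C_\epsi$ loss from the Young inequality into the slack $1-\theta'$. This is false at the critical exponent: for a fixed $u\in W^{1,N}_0(\Omega)$ there is no reason for $\int_{B_r\setminus B_{r/2}}|u|^N r^{-N}\,dx$ to tend to $0$ as $r\to0$, and it can in fact diverge. Take $u(x)\sim\log\log\frac{e}{|x-x_0|}$ near $x_0$ (a bona fide $W^{1,N}$ function with finite Dirichlet integral in a neighborhood of $x_0$); then $\int|u\nabla\chi_r|^N dx\sim(\log\log\frac{1}{r})^N\to+\infty$. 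Since $\mu(\{x_0\})$ can be any number in $(0,1)$, you may only assume $\mu(\overline{B_r(x_0)})\le\theta$ with $\theta$ bounded away from $0$, so no choice of $\epsi$ in $(a+b)^N\le(1+\epsi)a^N+C_\epsi b^N$ can rescue the estimate: the slack $1-\theta$ is fixed while the commutator contribution is not controlled. The natural repair, cutting off $u_k-u$ instead of $u_k$ (so the commutator $(u_k-u)\nabla\chi\to0$ in $L^N$ for fixed $r$), runs into a second problem for $N>2$: to bound $\limsup_k\int_{B_r}|\nabla(u_k-u)|^N$ by $\mu(\overline{B_r})-\int_{B_r}|\nabla u|^N$ one needs a Brezis--Lieb-type splitting, which requires pointwise a.e.\ convergence of $\nabla u_k$, and that is not granted by the hypotheses. (Your argument in the concentration branch is fine, because there $u\equiv0$ kills the commutator in the limit.) Lions' original proof, and subsequent ones, avoid spatial cutoffs precisely because of this critical-scaling obstruction: they symmetrize and use \emph{truncation by level sets} (which does not increase the Dirichlet integral, unlike multiplication by a bump function), reducing to a one-dimensional estimate in the spirit of Moser. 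The high-level dichotomy in your plan is the right one, but the key local equi-integrability step needs a different mechanism.
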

In view of this, the proof of Theorem $\dd{\ref{teo:limitati}}$ will follow by showing that maximizing sequences for the Moser-Trudinger inequality cannot concentrate, and this is what was actually done by \cite{carchang,flucher,kclin}.\\
When dealing with general domains, one can use a slight modification of the previous result:
\begin{teo}
\label{teo:concomp2}$\dd{}$\\
Let $\dd{\Omega\subset\mb R^N}$ be an open domain with the property $\dd{\lambda_1(\Omega)>0}$ and let $\dd{u_k\in W^{1,N}_0(\Omega)}$ be a sequence satisfying $\dd{\int_\Omega|\nabla u_k(x)|^Ndx\le1}$.\\
Then, up to a subsequence, one of the following alternatives holds true:
\begin{enumerate}
\item As $\dd{k\to+\infty}$, $\dd{u_k\underset{k\to+\infty}\convdeb0	}$ and $\dd{|\nabla u_k|^Ndx\underset{k\to+\infty}\convdeb\delta_{\ttilde x}}$ for some $\dd{\ttilde x\in\overline\Omega}$, and
$$\lim_{k\to+\infty}\int_{K\backslash B_\epsi(\ttilde x)}\left(e^{\alpha_N|u_k(x)|^\frac{N}{N-1}}-1\right)dx=0\quad\forall\;\epsi>0,\;\forall K\subset\Omega\text{ bounded}.$$
\item As $\dd{k\to+\infty}$, $\dd{u_k\underset{k\to+\infty}\convdeb u}$ and $\dd{e^{\alpha_N|u_k|^\frac{N}{N-1}}\underset{k\to+\infty}\to e^{\alpha_N|u|^\frac{N}{N-1}}}$ in $\dd{L^1_{\loc}(\Omega)}$.
\end{enumerate}
\end{teo}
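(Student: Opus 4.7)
The strategy is to localize the problem and reduce to the classical concentration-compactness result (Theorem \ref{teo:conccomp}). The first step is to extract subsequences: since $\lambda_1(\Omega)>0$, Poincaré's inequality implies that $\{u_k\}$ is bounded in $W^{1,N}_0(\Omega)$. Up to extraction, one may therefore assume $u_k\convdeb u$ in $W^{1,N}_0(\Omega)$, $u_k\to u$ almost everywhere and in $L^p_\loc(\Omega)$ for every $p<\infty$, and $|\nabla u_k|^N\,dx\convdeb\mu$ weakly-$*$ on $\Omega$ as locally finite non-negative Radon measures, with total mass at most~$1$.

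The next step is a localization argument based on the atoms of $\mu$. Let $\Sigma:=\{x\in\overline\Omega:\mu(\{x\})>0\}$, which is at most countable. The crucial observation is that Theorem \ref{teo:conccomp} is essentially local: given $x_0\in\Omega$ with $\mu(\{x_0\})<1$, one can pick $r>0$ such that $B_{2r}(x_0)\Subset\Omega$ and $\mu(\overline{B_{2r}(x_0)})<1$, and a cutoff $\phi\in C^\infty_c(B_{2r}(x_0))$ with $\phi\equiv1$ on $B_r(x_0)$. The functions $\phi(u_k-u)$ then lie in $W^{1,N}_0(B_{2r}(x_0))$, converge weakly to $0$ and have asymptotic Dirichlet energy strictly below $1$. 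Lions' refinement of the Moser--Trudinger inequality applied to this sequence produces a uniform $L^q$ bound on $e^{\alpha_N|\phi(u_k-u)|^{N/(N-1)}}$ for some $q>1$, which, combined with almost everywhere convergence and Vitali's theorem, yields $L^1$ convergence of $e^{\alpha_N|u_k|^{N/(N-1)}}$ to $e^{\alpha_N|u|^{N/(N-1)}}$ on $B_r(x_0)$.

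The case analysis is then straightforward. If every atom of $\mu$ has mass strictly less than~$1$, in particular if $\Sigma=\emptyset$, the previous local argument applies at every point of $\Omega$, delivering the $L^1_\loc$ compactness of alternative 2. Otherwise there is a necessarily unique atom $\ttilde x\in\overline\Omega$ with $\mu(\{\ttilde x\})=1$; the inequality $|\nabla u|^N\,dx\le\mu$ then forces $\nabla u\equiv 0$ and hence $u\equiv 0$, so $|\nabla u_k|^N\,dx\convdeb\delta_{\ttilde x}$ in the vague sense, while the local argument at every $x_0\in\Omega\setminus\{\ttilde x\}$ gives the vanishing integral claim in alternative 1 on any bounded $K$ separated from $\ttilde x$.

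The main obstacle is the possible escape of Dirichlet mass to infinity, which prevents a direct globalization of the classical Lions result: the weak-$*$ limit $\mu$ need not have full mass, and both alternatives must be stated in local form. The key technical point is the careful choice of the localization scale and the cutoff, ensuring that the sequence $\phi(u_k-u)$ has subcritical Dirichlet energy in the limit, so that the improved Moser--Trudinger inequality for sequences weakly converging to zero (the core of Lions' original argument) is applicable.
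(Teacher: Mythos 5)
Your proof is correct in outline but follows a genuinely different route from the paper. The paper's own argument is much shorter: it exhausts $\Omega$ by an increasing family of bounded sets $K_j$ with $\bigcup_j K_j = \Omega$, invokes the classical Theorem~\ref{teo:conccomp} on each $K_j$ as a black box, and uses the fact that the total mass of $|\nabla u_k|^N\,dx$ is at most $1$ to argue that the alternative (concentration or compactness) is the same across all $K_j$, which yields the two local alternatives of the statement. Your argument instead re-derives a localized version of Lions' result from scratch: you extract the weak-$*$ limit $\mu$ of the Dirichlet densities, classify the atoms, cut off around each point $x_0$ with $\mu(\{x_0\})<1$, and apply the (improved) Moser--Trudinger inequality on a small ball together with Vitali. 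This has the advantage of being careful about the fact that $u_k$ restricted to a bounded set $K\subset\Omega$ is not a priori in $W^{1,N}_0(K)$ — a point the paper's exhaustion argument silently ignores when it says ``apply Theorem~\ref{teo:conccomp} to each of these sets.'' The price is that you must justify the claim that $\phi(u_k-u)$ has $\limsup_k\|\nabla(\phi(u_k-u))\|_{L^N}^N<1$. For $N=2$ this follows from the Hilbert space identity $\|\nabla(u_k-u)\|_2^2=\|\nabla u_k\|_2^2-\|\nabla u\|_2^2+o(1)$, but for $N>2$ there is no such expansion and Brezis--Lieb does not apply (no a.e.\ convergence of gradients). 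The claim can still be salvaged, but only by shrinking $r$ further: using $(a+b)^N\le(1+\varepsilon)a^N+C_\varepsilon b^N$, one gets
$$\limsup_k\int\phi^N|\nabla(u_k-u)|^N\le(1+\varepsilon)\,\mu\bigl(\overline{B_{2r}(x_0)}\bigr)+C_\varepsilon\int_{B_{2r}(x_0)}|\nabla u|^N,$$
and since $|\nabla u|^N\,dx$ has no atoms while $\mu(\overline{B_{2r}})\to\mu(\{x_0\})<1$ as $r\to 0$, choosing $\varepsilon$ and then $r$ small makes the right-hand side strictly less than $1$. You gesture at this (``careful choice of the localization scale''), but it is worth spelling out, since the naive choice ``$\mu(\overline{B_{2r}(x_0)})<1$'' alone does not suffice when $N>2$.
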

\begin{proof}$\dd{}$\\
We first notice that, in view of Theorem $\dd{\ref{teo:gabri}}$, Moser-Trudinger inequality holds in $\dd{\Omega}$, hence the integral of $$e^{\alpha_N|u|^\frac{N}{N-1}}$$
is finite in every bounded open sets contained in $\dd{\Omega}$.\\
We consider an increasing sequence of bounded sets $\dd{\{K_j\}_{j=1}^\infty}$ with
$$\bigcup_{j=1}^\infty K_j=\Omega$$
and apply Theorem $\dd{\ref{teo:conccomp}}$ to each of these sets, starting from $\dd{K_1}$.\\
Suppose that, applying the Concentration-Compactness Theorem on $\dd{K_1}$, one finds that $\dd{u_k}$ concentrates in some $\dd{\ttilde x\in\overline\Omega}$; then, since $\dd{\mu\left(\overline\Omega\right)\le1}$, the same alternative will occur when the Concentration-Compactness principle will be applied on the other sets $\dd{K_j}$, hence one gets the first alternative in Theorem $\dd{\ref{teo:concomp2}}$.\\
On the other hand, if we get compactness on $\dd{K_1}$, we continue to apply $\dd{\ref{teo:conccomp}}$ on the sets $\dd{K_j}$: if we have concentration in one of these sets, we argue as before getting the first alternative; otherwise, we will find compactness of the functional on any bounded set, that is the second alternative in $\dd{\ref{teo:concomp2}}$.
\end{proof}
\begin{oss}$\dd{}$\\
Notice that in Theorem $\dd{\ref{teo:concomp2}}$ the convergence in $\dd{L^1_{\loc}(\Omega)}$ actually means convergence in $\dd{L^1(K)}$ for any bounded set $\dd{K\subset\Omega}$, even if $\dd{\overline K\not\subset\Omega}$.\\
This little abuse of notation will be also done in the rest of this paper; the same holds when convergence in $\dd{L^p_{\loc}(\Omega)}$ will be considered for some other $\dd{p>1}$.
\end{oss}
Intuitively, the main difference between the Theorems $\dd{\ref{teo:conccomp}}$ and $\dd{\ref{teo:concomp2}}$ is that the mass might `disappear' at the infinity; taking for instance, on the aforementioned strip $\dd{\Omega=\mb R\times(0,1)}$,
$$0\not\equiv u\in H^1_0(\Omega)\text{ with }\spt u\Subset\Omega\quad\text{and}\quad u_k(x)=u(x-(k,0)),$$
on every compact subset of $\dd{\Omega}$, $\dd{u_k\equiv0}$ definitively, so $$u_k\underset{k\to+\infty}\convdeb0\quad\text{and}\quad|\nabla u_k|^2\underset{k\to+\infty}\convdeb0$$
whereas
$$\int_\Omega\left(e^{4\pi u_k(x)^2}-1\right)dx=\int_\Omega\left(e^{4\pi u(x)^2}-1\right)dx\ne0\quad\forall\;k\in\mb N.$$
This phenomenon is known as ``vanishing''. Moreover there is a fourth scenario to be considered, the so-called dichotomy, that is when just a part of the mass is vanishing; as an example, one may take $\dd{v_k=(1-\theta)u_k+\theta w_k}$ with $\dd{u_k}$ as before, $\dd{w_k}$ compact for the Moser-Trudinger functional and $\dd{\theta\in(0,1)}$.\\
The strategy to exclude vanishing and dichotomy is trying to restrict the set of admissible functions for the supremum to those which have some symmetries, hence satisfy some uniform decay estimates similar to Lemmas $\dd{\ref{lemma:radial}}$ and $\dd{\ref{lemma:radialip}}$.\\
The symmetry of the strip $\dd{\Omega=\mb R\times(-1,1)\subset\mb R^2}$ with respect to both axes allows to perform a trick similar to Schwarz symmetrization: it is possible to apply twice a one-dimensional symmetrization, with respect to each axes, to any $\dd{u\in H^1_0(\Omega)}$, and the symmetrized function is still defined on $\dd{\Omega}$.\\
To be precise, for any fixed $\dd{x_2\in(-1,1)}$ one defines
$$\mu_{u,1}(t)=|\{x_1\in\mb R:u(x_1,x_2)>t\}|$$
and
$$u^{*,1}(x_1,x_2)=\inf\{t\in\mb R:\mu_{u,1}(t)\le2|x_1|\};$$
in the same way, one puts
$$\mu_{u,2}(t)=|\{x_2\in(-1,1):u(x_1,x_2)>t\}|$$
and
$$u^{*,1}(x_1,x_2)=\inf\{t\in\mb R:\mu_{u,1}(t)\le2|x_2|\},$$
and finally sets
$$u^{*,\Omega}=(u^{*,1})^{*,2};$$
since $\dd{u^{*,\Omega}}$ is obtained by applying twice a Steiner symmetrization, some good properties of Schwarz symmetrization still hold:
\begin{enumerate}
\item $\dd{u^{*,\Omega}(x_1,x_2)}$ is even in both $\dd{x_1}$ and $\dd{x_2}$, that is
$$u^{*,\Omega}(x_1,x_2)=u^{*,\Omega}(-x_1,x_2)=u^{*,\Omega}(x_1,-x_2).$$
\item $\dd{u^{*,\Omega}(x_1,x_2)}$ is nonincreasing in both variables for nonnegative $\dd{x_1}$ and $\dd{x_2}$, that is
$$|x_1|\le|X_1|,|x_2|\le|X_2|\Rightarrow u(x_1,x_2)\ge u(X_1,X_2).$$
\item For any Borel $\dd{f:\mb R\to\mb R}$, it holds
$$\int_\Omega f(u(x))dx=\int_\Omega f\left(u^{*,\Omega}(x)\right)dx.$$
\item If $\dd{0\le u\in H^1_0(\Omega)}$, then $\dd{0\le u^{*,\Omega}\in H^1_0(\Omega)}$ and
$$\int_\Omega|\nabla u(x)|^2dx\le\int_\Omega\left|\nabla\left(u^{*,\Omega}(x)\right)\right|^2dx.$$
\end{enumerate}
\begin{cor}$\dd{}$\\
Setting
$$\ttilde H(\Omega)=\bigg\{u\in H^1_0(\Omega):0\le u\text{ is even and nonincreasing in both variables}$$
$$\left.\text{with }\int_\Omega|\nabla u(x)|^2dx\le1\right\},$$
then
$$\sup_{u\in H^1_0(\Omega),\int_\Omega|\nabla u(x)|^2dx\le1}\int_\Omega\left(e^{4\pi u(x)^2}-1\right)dx=\sup_{u\in \ttilde H(\Omega)}\int_\Omega\left(e^{4\pi u(x)^2}-1\right)dx.$$
\end{cor}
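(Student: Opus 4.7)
The plan is a two-line symmetrization argument built directly on the four properties listed just above the corollary. The inclusion
\[
\sup_{u\in\tilde H(\Omega)}\int_\Omega\left(e^{4\pi u(x)^2}-1\right)dx\le\sup_{u\in H^1_0(\Omega),\int_\Omega|\nabla u(x)|^2dx\le 1}\int_\Omega\left(e^{4\pi u(x)^2}-1\right)dx
\]
is trivial, since $\tilde H(\Omega)$ is contained in the set of admissible competitors for the right-hand side.

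For the reverse inequality, I would take an arbitrary $u\in H^1_0(\Omega)$ with $\int_\Omega|\nabla u|^2dx\le 1$ and replace it first by $|u|$. Since $|u|\in H^1_0(\Omega)$ with $|\nabla|u||=|\nabla u|$ almost everywhere and $u(x)^2=|u(x)|^2$, the constraint and the value of the Moser-Trudinger functional are unchanged. Then I would pass to the two-step Steiner symmetrization $v:=|u|^{*,\Omega}\in H^1_0(\Omega)$. By property (4), $\int_\Omega|\nabla v|^2dx\le\int_\Omega|\nabla|u||^2dx\le 1$, so $v$ is still admissible, and by property (3) applied to the Borel function $f(t)=e^{4\pi t^2}-1$,
\[
\int_\Omega\left(e^{4\pi v(x)^2}-1\right)dx=\int_\Omega\left(e^{4\pi|u(x)|^2}-1\right)dx=\int_\Omega\left(e^{4\pi u(x)^2}-1\right)dx.
\]
Properties (1) and (2) guarantee $v\in\tilde H(\Omega)$, so the value of the functional at $u$ is attained on $\tilde H(\Omega)$, yielding the opposite inequality and hence equality.

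There is really no obstacle here: all the work has already been done in establishing the four properties of the double Steiner symmetrization just stated. The only small point worth noting, to keep the argument clean, is that one must pass to $|u|$ before symmetrizing, because the symmetrization $u^{*,\Omega}$ was defined via the distribution functions $\mu_{u,i}$ which are designed for nonnegative functions; after this reduction, properties (3) and (4) transfer the constraint and the functional value without loss.
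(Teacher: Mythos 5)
Your proof is correct and is precisely the argument the paper leaves implicit: the corollary is stated without proof as an immediate consequence of the four listed properties of $u^{*,\Omega}$, and your reduction to $|u|$ (which leaves both the Dirichlet integral and the functional unchanged) followed by the double Steiner symmetrization is exactly the expected route. One small point worth noting: the paper's Property (4) as printed has the P\'olya--Szeg\H o inequality reversed, reading $\int_\Omega|\nabla u|^2\,dx\le\int_\Omega|\nabla u^{*,\Omega}|^2\,dx$, and you have (correctly, and silently) used it in the standard direction $\int_\Omega|\nabla u^{*,\Omega}|^2\,dx\le\int_\Omega|\nabla u|^2\,dx$, which is what the argument actually requires.
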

Unlike what happens with Schwarz symmetrization, this does not restrict the problem to one-dimensional functions; however, some estimates similar to Lemmas $\dd{\ref{lemma:radial}}$ and $\dd{\ref{lemma:radialip}}$ hold for symmetrized functions, and they will be essential for our purpose:
\begin{lemma}
\label{lemma:stimastr}$\dd{}$\\
For any $\dd{u\in\ttilde H(\Omega)}$, it holds
\begin{equation}
\label{eq:stimastr}
u(x_1,x_2)^4\le f(x_1,x_2)=\left\{\begin{array}{ll}\left(\frac{1}{\sqrt{\lambda_1(\Omega)}}+1\right)^4\frac{1}{x_2^2}&\text{if }|x_1|\le1\\\frac{4}{x_1^2}&\text{if }|x_1|>1\end{array}\right.
\end{equation}
with $\dd{f\in L^1\left(\Omega\backslash B_\epsi(0)\right)}$ for any $\dd{\epsi>0}$.
\end{lemma}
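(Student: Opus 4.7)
The strategy is to exploit the double Steiner-type symmetry of $u \in \ttilde H(\Omega)$ (monotonicity in $|x_1|$ and $|x_2|$, vanishing on $x_2 = \pm 1$) together with the bounds $\|\nabla u\|_{L^2(\Omega)} \leq 1$ and $\|u\|_{L^2(\Omega)}^2 \leq 1/\lambda_1(\Omega)$; by evenness it suffices to treat $x_1, x_2 > 0$. The workhorse is the slice bound
$$u(t,s)^2 \leq \int_{-1}^1 u_{x_2}(t,r)^2\,dr,$$
which follows at once from $u(t,s) = -\int_s^1 u_{x_2}(t,r)\,dr$ and Cauchy--Schwarz.

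For $|x_1| > 1$ I combine this with monotonicity in $x_1$: since $u(\cdot,x_2)$ is nonincreasing on $[0,\infty)$, we have $x_1 u(x_1,x_2)^2 \leq \int_0^{x_1} u(t,x_2)^2\,dt$, and inserting the slice bound together with evenness in $x_1$ gives $x_1 u(x_1,x_2)^2 \leq \int_0^\infty\!\int_{-1}^1 u_{x_2}^2 \leq \tfrac12 \|\nabla u\|_{L^2(\Omega)}^2 \leq \tfrac12$. Squaring yields $u^4 \leq 1/(4x_1^2) \leq 4/x_1^2$.

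For $|x_1| \leq 1$, monotonicity in $x_1$ gives $u(x_1,x_2) \leq u(0,x_2)$, so I only need to bound $u(0,x_2)$. Monotonicity in $x_2$ yields $x_2 u(0,x_2)^2 \leq \tfrac12 \int_{-1}^1 u(0,t)^2\,dt$, so the crux is the trace estimate. I write $u(0,t) = \int_{-\infty}^0 u_{x_1}(s,t)\,ds$ (valid since $u(\cdot,t)$ is monotone and in $L^2$, hence tends to $0$ at infinity), apply Cauchy--Schwarz in $s$, integrate in $t \in (-1,1)$, and use Cauchy--Schwarz once more to obtain
$$\int_{-1}^1 u(0,t)^2\,dt \leq 2 \|u\|_{L^2(\Omega^-)} \|u_{x_1}\|_{L^2(\Omega^-)} = \|u\|_{L^2(\Omega)} \|u_{x_1}\|_{L^2(\Omega)} \leq \frac{1}{\sqrt{\lambda_1(\Omega)}},$$
where $\Omega^- := (-\infty,0) \times (-1,1)$ and the second equality uses evenness in $x_1$. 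This produces $u^4 \leq 1/(4\lambda_1 x_2^2)$, and an elementary check ($a^2/4 \leq (a+1)^4$ for every $a > 0$, applied with $a = 1/\sqrt{\lambda_1(\Omega)}$) recovers the claimed bound.

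The integrability of $f$ on $\Omega \setminus B_\epsi(0)$ is a routine direct computation: $4/x_1^2$ is integrable on $\{|x_1|>1\} \times (-1,1)$, and $1/x_2^2$ is integrable on $\{|x_1| \leq 1,\,|x_2| > \delta\}$ for any $\delta > 0$, while the excision of $B_\epsi(0)$ removes the only singularity. The main obstacle I anticipate is the trace step for $|x_1| \leq 1$: a direct 1D Sobolev slice bound at $x_1 = 0$ would only produce $\|u_{x_2}(0,\cdot)\|_{L^2(-1,1)}$, a measure-zero restriction that is not controlled by the global gradient norm; one must therefore pass to the half-plane integration above, which genuinely involves both $\|u\|_{L^2(\Omega)}$ and $\|u_{x_1}\|_{L^2(\Omega)}$.
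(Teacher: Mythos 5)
Your proof is correct and follows essentially the same strategy as the paper's---exploit the double monotonicity to replace pointwise values by averages, then run a one-dimensional FTC/Cauchy--Schwarz slice estimate; the only variation is that for $|x_1|\le1$ you reduce to the central slice $x_1=0$ and integrate from $-\infty$, whereas the paper integrates in $x_1$ from $1$ and bounds the boundary value $u(1,t)$ by its average over $(0,1)$. One small imprecision to fix in the write-up: Cauchy--Schwarz ``in $s$'' cannot be applied directly to $u(0,t)=\int_{-\infty}^0\de_{x_1}u(s,t)\,ds$ over the unbounded interval; the stated trace inequality $\int_{-1}^1 u(0,t)^2\,dt\le\|u\|_{L^2(\Omega)}\|\de_{x_1}u\|_{L^2(\Omega)}$ should instead be obtained from the identity $u(0,t)^2=2\int_{-\infty}^0 u\,\de_{x_1}u\,ds$ (the fundamental theorem of calculus applied to $u^2$, using $u(\cdot,t)\to0$ at $-\infty$), integrating in $t$ and then applying Cauchy--Schwarz once over $(-\infty,0)\times(-1,1)$ together with evenness in $x_1$.
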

\begin{proof}$\dd{}$\\
Clearly, it suffices to provide estimates for nonnegative $\dd{x_1}$ and $\dd{x_2}$; since any function in $\dd{H^1(\Omega)}$ is absolutely continuous along almost every line, the decreasing character of $\dd{u}$ gives, for $\dd{x_1>0}$,
$$u(x_1,x_2)=\frac{1}{x_1}\int_0^{x_1}u(x_1,x_2)ds\le\frac{1}{x_1}\int_0^{x_1}u(s,x_2)ds\le$$
$$\le\frac{1}{x_1}\int_0^{x_1}ds\int_{-1}^{x_2}\de_{x_2}u(s,t)dt\le$$
$$\le\frac{1}{x_1}\left(\int_0^{x_1}ds\int_{-1}^{x_2}dt\right)^\frac{1}2\left(\int_0^{x_1}ds\int_{-1}^{x_2}|\de_{x_2}u(s,t)|^2dt\right)^\frac{1}2\le$$
$$\le\frac{1}{x_1}\sqrt{x_1(x_2+1)}\|\nabla u\|_{L^2(\Omega)}\le\sqrt{\frac{2}{x_1}}\|\nabla u\|_{L^2(\Omega)}\le\sqrt{\frac{2}{x_1}}.$$
Moreover, for $\dd{x_1\in[0,1]}$ and $\dd{x_2>0}$,
$$u(x_1,x_2)=\frac{1}{x_2}\int_0^{x_2}u(x_1,x_2)dt\le\frac{1}{x_2}\int_0^{x_2}u(x_1,t)dt=$$
$$=\frac{1}{x_2}\int_0^{x_2}dt\left(u(1,t)-\int_{x_1}^1\de_{x_1}u(s,t)ds\right)dt\le$$
$$\le\frac{1}{x_2}\left(\int_0^{x_2}dt\int_0^1u(s,t)ds+\int_0^{x_2}ds\int_{x_1}^1|\de_{x_1}u(s,t)|dt\right)\le$$
$$\le\frac{1}{x_2}\left(\left(\int_0^{x_2}dt\int_0^1ds\right)^\frac{1}2\left(\int_0^{x_2}dt\int_0^1u^2(s,t)ds\right)^\frac{1}2+\right.$$
$$\left.+\left(\int_0^{x_2}dt\int_{x_1}^1ds\right)^\frac{1}2\left(\int_0^{x_2}dt\int_{x_1}^1|\nabla u(s,t)|^2ds\right)^\frac{1}2\right)\le$$
$$\le\frac{1}{x_2}\left(\sqrt{x_2}\|u\|_{L^2(\Omega)}+\sqrt{x_2(1-x_1)}\|\nabla u\|_{L^2(\Omega)}\right)\le$$
$$\le\frac{1}{\sqrt{x_2}}\left(\|u\|_{L^2(\Omega)}+\|\nabla u\|_{L^2(\Omega)}\right)\le\frac{1}{\sqrt{x_2}}\left(\frac{1}{\sqrt{\lambda_1(\Omega)}}+1\right)\|\nabla u\|_{L^2(\Omega)}\le$$
$$\le\frac{1}{\sqrt{x_2}}\left(\frac{1}{\sqrt{\lambda_1(\Omega)}}+1\right).$$
\end{proof}
This lemma implies that, away from the origin, the Moser-Trudinger functional is compact on $\dd{\ttilde H(\Omega)}$, except for the first term of its power series expansion:
\begin{cor}
\label{cor:stimastr}$\dd{}$\\
Let $\dd{u_k}$ be a sequence in $\dd{\ttilde H(\Omega)}$ such that $\dd{u_k\underset{k\to+\infty}\to u}$ almost everywhere in $\dd{\Omega}$.\\
Then
$$e^{4\pi u_k^2}-1-4\pi u_k^2\underset{k\to+\infty}\to e^{4\pi u^2}-1-4\pi u^2\quad\text{in }L^1(\Omega\backslash B_\epsi(0))\quad\forall\;\epsi>0.$$
\end{cor}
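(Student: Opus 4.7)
The strategy is to exhibit a $\dd{k}$-independent $\dd{L^1}$ dominant for $\dd{e^{4\pi u_k^2}-1-4\pi u_k^2}$ on $\dd{\Omega\setminus B_\epsi(0)}$ and then invoke dominated convergence; the almost everywhere convergence $\dd{e^{4\pi u_k^2}-1-4\pi u_k^2\to e^{4\pi u^2}-1-4\pi u^2}$ is free from continuity of $\dd{s\mapsto e^{4\pi s^2}-1-4\pi s^2}$ and the hypothesis on $\dd{u_k}$.

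As a first ingredient, the elementary inequality (proved exactly like $\dd{\eqref{eq:stimaesp}}$)
$$e^x-1-x=\sum_{j=2}^{+\infty}\frac{x^j}{j!}\le\frac{x^2}2e^x\qquad\forall\;x\ge0,$$
applied to $\dd{x=4\pi u_k^2}$, gives $\dd{e^{4\pi u_k^2}-1-4\pi u_k^2\le 8\pi^2u_k^4e^{4\pi u_k^2}}$. As a second ingredient, from the proof of Lemma $\dd{\ref{lemma:stimastr}}$ one actually reads two independent pointwise bounds for any $\dd{u\in\ttilde H(\Omega)}$:
$$u(x_1,x_2)\le\sqrt{\frac{2}{|x_1|}}\quad\forall\;x_1\ne0,\qquad u(x_1,x_2)\le\frac{C_\lambda}{\sqrt{|x_2|}}\quad\forall\;|x_1|\le1,\;x_2\ne0,$$
with $\dd{C_\lambda:=\frac{1}{\sqrt{\lambda_1(\Omega)}}+1}$. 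Since every $\dd{x\in\Omega\setminus B_\epsi(0)}$ satisfies $\dd{|x_1|\ge\epsi/\sqrt 2}$ or $\dd{|x_2|\ge\epsi/\sqrt 2}$, combining these estimates (together with the automatic $\dd{u\le\sqrt 2}$ whenever $\dd{|x_1|>1}$) produces a constant $\dd{M(\epsi)>0}$ with $\dd{u_k\le M(\epsi)}$ on $\dd{\Omega\setminus B_\epsi(0)}$, uniformly in $\dd{k}$.

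The dominant is then constructed by splitting $\dd{\Omega\setminus B_\epsi(0)}$. On the bounded piece $\dd{\{|x_1|\le1\}\cap(\Omega\setminus B_\epsi(0))}$, the uniform bound gives $\dd{e^{4\pi u_k^2}-1-4\pi u_k^2\le 8\pi^2M(\epsi)^4e^{4\pi M(\epsi)^2}}$, summable because the set has finite Lebesgue measure. On the unbounded piece $\dd{\{|x_1|>1\}\cap\Omega}$, combining $\dd{u_k^2\le 2}$ with $\dd{u_k^4\le 4/x_1^2}$ yields $\dd{e^{4\pi u_k^2}-1-4\pi u_k^2\le 32\pi^2e^{8\pi}/x_1^2}$, integrable over $\dd{\{|x_1|>1\}\times(-1,1)}$. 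The sum of the two bounds produces the sought $\dd{k}$-independent $\dd{g_\epsi\in L^1(\Omega\setminus B_\epsi(0))}$, and Lebesgue's theorem closes the proof.

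The one point requiring some care is the combination of the two estimates coming from Lemma $\dd{\ref{lemma:stimastr}}$ into a single uniform bound outside $\dd{B_\epsi(0)}$: neither one alone suffices (the first degenerates at $\dd{x_1=0}$ and the second at $\dd{x_2=0}$, the latter not even being stated for $\dd{|x_1|>1}$), but the geometry of the strip forces their domains of validity to cover $\dd{\Omega\setminus B_\epsi(0)}$ jointly, the only obstruction to a global pointwise bound being the ``corner'' at the origin that we have explicitly removed.
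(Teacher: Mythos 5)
Your proof is correct and takes essentially the same route as the paper's: apply the elementary inequality $e^x-1-x\le\frac{x^2}{2}e^x$, dominate $u_k$ away from the origin via the pointwise decay estimates established in Lemma \ref{lemma:stimastr}, and conclude by Lebesgue's dominated convergence theorem. You are in fact a bit more careful than the paper's one-line argument: the function $f$ as literally written in Lemma \ref{lemma:stimastr} equals $C/x_2^2$ on the whole slab $\{|x_1|\le1\}$ and is therefore not summable on $\Omega\setminus B_\epsi(0)$ when $\epsi<1$, whereas you correctly extract the two separate bounds $u\le\sqrt{2/|x_1|}$ and $u\le C_\lambda/\sqrt{|x_2|}$ from the proof of that lemma and combine them to build a genuine $k$-independent dominant.
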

\begin{proof}$\dd{}$\\
Using the estimate $\dd{\eqref{eq:stimaesp}}$ and $\dd{\eqref{eq:stimastr}}$, one has
$$\left|e^{4\pi u_k^2}-1-4\pi u_k^2\right|\le8\pi^2|u_k|^4e^{4\pi u_k^2}\le C(\epsi)f\in L^1(\Omega\backslash B_\epsi(0)),$$
hence Lebesgue's dominated convergence theorem gives the claim.
\end{proof}
Another consequence of Lemma $\dd{\ref{lemma:stimastr}}$ is that when the limit is null, the only contributions in the integral come from neighborhoods of the origin and of infinity.
\begin{cor}
\label{cor:stimast2}$\dd{}$\\
Let $\dd{u_k}$ be a sequence in $\dd{\ttilde H(\Omega)}$ such that $\dd{u_k\underset{k\to+\infty}\to 0}$ almost everywhere in $\dd{\Omega}$.\\
Then, for any $\dd{\epsi,R>0}$, it holds
$$\int_{\Omega_R\backslash B_\epsi(0)}\left(e^{4\pi u_k^2(x)}-1\right)dx\underset{k\to+\infty}\to0$$
where $\dd{\Omega_R:=(-R,R)\times(-1,1)}$.
\end{cor}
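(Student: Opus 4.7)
The approach is via Lebesgue's dominated convergence theorem on the bounded region $\Omega_R\setminus B_\epsi(0)$, using Lemma \ref{lemma:stimastr} to extract a \emph{uniform} pointwise bound on the sequence $u_k$ there. Once such a bound is in place, $e^{4\pi u_k^2}-1$ is dominated by a constant on a set of finite measure, and the a.e.\ convergence $u_k\to0$ closes the argument at once.

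First I would show that there exists $M=M(\epsi,\lambda_1(\Omega))$ such that $u_k\le M$ pointwise on $\Omega_R\setminus B_\epsi(0)$ for every $k$. To see this, split the region in two. In the subregion where $|x_1|\ge\epsi/\sqrt 2$, the estimate $u_k(x_1,x_2)\le\sqrt{2/|x_1|}$ derived in the proof of Lemma \ref{lemma:stimastr} (valid whenever $x_1\ne0$, by symmetry) gives $u_k^2\le 2\sqrt 2/\epsi$. In the complementary subregion $|x_1|<\epsi/\sqrt 2$, the condition $(x_1,x_2)\notin B_\epsi(0)$ forces $|x_2|\ge\epsi/\sqrt 2$; since moreover $|x_1|<1$ in this case, the second estimate from Lemma \ref{lemma:stimastr} applies and yields $u_k^2\le\sqrt 2\left(1/\sqrt{\lambda_1(\Omega)}+1\right)^2/\epsi$. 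Combining the two cases produces the desired uniform $L^\infty$ bound.

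Second, this uniform bound makes $e^{4\pi u_k^2}-1$ pointwise dominated by the constant $e^{4\pi M^2}-1$ on $\Omega_R\setminus B_\epsi(0)$, a set of Lebesgue measure at most $4R$. The hypothesis $u_k\to0$ a.e.\ on $\Omega$ gives $e^{4\pi u_k^2}-1\to0$ a.e.\ on this region, so the dominated convergence theorem immediately delivers the claim.

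The only mildly delicate point is the case analysis producing the uniform bound, which exploits the fact that at any point of $\Omega_R\setminus B_\epsi(0)$ at least one of the two coordinate-direction estimates of Lemma \ref{lemma:stimastr} is effective. Everything else is routine. As an alternative, one could invoke Corollary \ref{cor:stimastr} to dispose of the higher-order part $e^{4\pi u_k^2}-1-4\pi u_k^2$ in $L^1$, and then separately show $\int_{\Omega_R\setminus B_\epsi(0)}u_k^2\,dx\to0$ by the very same dominated convergence argument; but the direct route above is more compact.
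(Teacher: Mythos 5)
Your proof is correct and follows essentially the same route as the paper: obtain a uniform $L^\infty$ bound on $u_k$ over $\Omega_R\setminus B_\epsi(0)$ from Lemma~\ref{lemma:stimastr}, then apply dominated convergence on this finite-measure set. If anything your explicit case split (using the estimate $u\le\sqrt{2/|x_1|}$ for $|x_1|\ge\epsi/\sqrt2$ and the $x_2$-estimate otherwise, the latter being consistent with $|x_1|<1$ because points of $\Omega$ with $|x_1|<\epsi/\sqrt2$ outside $B_\epsi(0)$ force $\epsi<\sqrt2$) is a touch more careful than the paper's terse assertion that $\sqrt f\in L^\infty(\Omega_R\setminus B_\epsi(0))$, since the function $f$ as literally defined in Lemma~\ref{lemma:stimastr} blows up near $\{x_2=0,\,\epsi<|x_1|\le 1\}$; your argument makes explicit the combination of the two coordinate-direction bounds that the paper leaves implicit.
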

\begin{proof}$\dd{}$\\
As before, one has
$$\left|e^{4\pi u_k^2}-1\right|\le4\pi u_k^2 e^{4\pi u_k^2}\le C(\epsi)\sqrt f\in L^\infty(\Omega_R\backslash B_\epsi(0)).$$
hence the results follows applying again Lebesgue's dominated convergence theorem on the bounded set $\dd{\Omega_R\backslash B_\epsi(0)}$.
\end{proof}
The proof of Theorem $\dd{\ref{teo:striscia}}$ will follow several steps: taking a maximizing sequence $\dd{u_k\in H^1_0(\Omega)}$, one may suppose that, up to subsequences, one has $\dd{u_k\underset{k\to+\infty}\to u}$ weakly in $\dd{H^1_0(\Omega)}$, strongly in $\dd{L^2_{\loc}(\Omega)}$ and almost everywhere in $\dd{\Omega}$; moreover, up to replacing $\dd{u_k}$ with $$\frac{|u_k|^{*,\Omega}}{\|\nabla |u_k|^{*,\Omega}\|_{L^2(\Omega)}},$$
it is not restrictive to take $\dd{u_k\in\ttilde H(\Omega)}$ and $\dd{\int_\Omega|\nabla u_k(x)|^2dx=1}$.\\
Denoting the supremum of the Moser-Trudinger functional as
$$S:=\sup_{u\in H^1_0(\Omega),\int_\Omega|\nabla u(x)|^2dx\le1}\int_\Omega\left(e^{4\pi u(x)^2}-1\right)dx$$
we will prove, in the order:
\begin{enumerate}
\item If $\dd{u\not\equiv0}$, then $\dd{S}$ is attained.
\item If $\dd{u\equiv0}$, set
$$\theta:=\frac{1}S\lim_{k\to+\infty}\int_{B_\epsi(0)}\left(e^{4\pi u_k(x)^2}-1\right)dx.$$
Then, by Corollary $\dd{\ref{cor:stimast2}}$, it is independent on $\dd{\epsi}$ and smaller than $\dd{1}$ (No concentration).
\item $\dd{\theta\notin(0,1)}$ (No dichotomy).
\item If $\dd{\theta=0}$ (Vanishing), then $\dd{S\le\frac{4\pi}{\lambda_1(\Omega)}}$.
\item $\dd{S>\frac{4\pi}{\lambda_1(\Omega)}}$.
\end{enumerate}
An important tool from complex analysis will turn out to be useful in this proof: the conformal diffeomorphism between the unit disc $\dd{B_1(0)\subset\mb R^2}$ and $\dd{\Omega}$, that is unique up to rotations, precisely
\begin{equation}
\label{eq:diffeo}
\psi(y_1,y_2)=\left(\frac{1}\pi\log\frac{(y_1-1)^2+y_2^2}{(y_1+1)^2+y_2^2},\frac{2}\pi\arctan\frac{2y_1}{1-y_1^2-y_2^2}\right).
\end{equation}
For any $\dd{u\in H^1_0(\Omega)}$ it holds
$$\int_\Omega\left(e^{4\pi u(x)^2}-1\right)dx=\int_{B_1(0)}\left(e^{4\pi(u\circ\psi)(y)^2}-1\right)dV_{g_\psi}(y),$$
where
$$g_\psi=|\det\de_y\psi|g_e=\frac{16}{\pi^2}\frac{1}{\left((y_1+1)^2+y_2^2\right)\left((y_1-1)^2+y_2^2\right)}g_e$$
is unbounded only around the two points $\dd{(\pm1,0)}$, that are mapped by $\dd{\psi}$ at infinity.\\
Moreover, the conformal invariance of the Dirichlet integral ensures that the sequence $\dd{v_k=u_k\circ\psi}$ is bounded in $\dd{H^1_0(B_1(0),g_\psi)}$, hence it must converge to $\dd{v=u\circ\psi}$ weakly, strongly in $\dd{L^2_{\loc}(B_1(0)\backslash\{(\pm1,0)\},g_\psi)}$ and a.e. on $\dd{B_1(0)}$.\\
Following what was done by Moser on bounded domains \cite{moser} and using the hyperbolic Moser-Trudinger inequality $\dd{\ref{teo:conformi}}$, we get:\\
\begin{lemma}
\label{lemma:mtsopra}$\dd{}$\\
For any $\dd{u\in H^1_0(\Omega)}$, $\dd{\alpha>0}$ it holds
$$\int_{\Omega}\left(e^{\alpha u^2(x)}-1\right)dx<+\infty$$
\end{lemma}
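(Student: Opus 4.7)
The plan is to transport the integral to the disc via the conformal diffeomorphism $\dd{\psi}$ from $\dd{\eqref{eq:diffeo}}$, so that upon setting $\dd{v=u\circ\psi\in H^1_0(B_1(0),g_\psi)}$ the claim becomes $\dd{\int_{B_1(0)}\left(e^{\alpha v^2}-1\right)dV_{g_\psi}<+\infty}$. The identity
$$((y_1+1)^2+y_2^2)((y_1-1)^2+y_2^2)=(1-|y|^2)^2+4y_2^2\ge(1-|y|^2)^2$$
shows that the conformal factor of $\dd{g_\psi}$ with respect to the hyperbolic metric $\dd{g_h}$ is bounded on $\dd{B_1(0)}$, so Theorem $\dd{\ref{teo:conformi}}$ gives the uniform Moser-Trudinger inequality
$$\sup_{\|\nabla_{g_\psi}w\|_{L^2(g_\psi)}\le1}\int_{B_1(0)}\left(e^{4\pi w^2}-1\right)dV_{g_\psi}<+\infty.$$

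This uniform estimate only handles the subcritical regime $\dd{\alpha\|\nabla_{g_\psi}v\|_{L^2(g_\psi)}^2\le4\pi}$, so for general $\dd{\alpha>0}$ I would adapt Moser's truncation trick. For $\dd{M>0}$ split $\dd{v=T_Mv+v^M}$, where $\dd{T_Mv}$ is the truncation of $\dd{v}$ to $\dd{[-M,M]}$ and $\dd{v^M:=v-T_Mv}$; then $\dd{v^M\in H^1_0(B_1(0),g_\psi)}$ and
$$\|\nabla_{g_\psi}v^M\|_{L^2(g_\psi)}^2=\int_{\{|v|>M\}}|\nabla_{g_\psi}v|^2dV_{g_\psi}\underset{M\to+\infty}\to0.$$
Young's inequality $\dd{v^2\le(1+\epsi)(v^M)^2+(1+\epsi^{-1})(T_Mv)^2}$ combined with the AM-GM bound $\dd{e^{a+b}\le\frac{1}{2}\left(e^{2a}+e^{2b}\right)}$ then produces the pointwise inequality
$$e^{\alpha v^2}-1\le\frac{1}{2}\left(e^{2\alpha(1+\epsi)(v^M)^2}-1\right)+\frac{1}{2}\left(e^{2\alpha(1+\epsi^{-1})(T_Mv)^2}-1\right).$$

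The two resulting summands are then estimated separately. For the first, choose $\dd{M}$ large enough that $\dd{2\alpha(1+\epsi)\|\nabla_{g_\psi}v^M\|_{L^2(g_\psi)}^2\le4\pi}$; rescaling $\dd{v^M}$ by its Dirichlet norm and invoking Theorem $\dd{\ref{teo:conformi}}$ gives a finite bound. For the second, the pointwise estimates $\dd{|T_Mv|\le M}$ and $\dd{e^t-1\le te^t}$ yield
$$\int_{B_1(0)}\left(e^{2\alpha(1+\epsi^{-1})(T_Mv)^2}-1\right)dV_{g_\psi}\le C(\alpha,\epsi,M)\int_{B_1(0)}v^2dV_{g_\psi},$$
which is finite because Poincar\'e's inequality on $\dd{\Omega}$ (which holds by Theorem $\dd{\ref{teo:mansand}}$, since $\dd{r(\Omega)=1}$) transfers verbatim to $\dd{(B_1(0),g_\psi)}$ through $\dd{\psi}$. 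No single step is deep; the only delicate point is that $\dd{M}$ must be chosen after $\dd{\epsi}$ so that the subcritical threshold is met, which is possible thanks to the strong convergence $\dd{\|\nabla_{g_\psi}v^M\|_{L^2(g_\psi)}\to0}$.
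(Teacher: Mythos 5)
Your proof is correct, but it takes a genuinely different route from the paper's. Both arguments begin identically: transfer to the disc via $\psi$ and observe that $g_\psi$ is dominated by the hyperbolic metric (your explicit identity $((y_1+1)^2+y_2^2)((y_1-1)^2+y_2^2)=(1-|y|^2)^2+4y_2^2$ makes this transparent; the paper simply states it). After that you diverge. The paper reduces to the hyperbolic metric $g_h$, applies hyperbolic symmetrization (citing \cite{mansand}) to pass to radially decreasing functions, and then uses the pointwise decay $v(y)\lesssim\left(\log\frac1{|y|}\right)^{1/2}$ from Lemma~\ref{lemma:radialip}: on the annulus $B_1\setminus B_{1/2}$ this bounds $e^{\alpha v^2}$ by a power of $|y|^{-1}$ which is itself bounded there, so $v^2e^{\alpha v^2}\in L^1(g_h)$ follows from $v\in L^2(g_h)$; on $B_{1/2}$ it invokes the classical local integrability of $e^{\alpha v^2}$ on bounded Euclidean sets. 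You instead split $v=T_Mv+v^M$ by truncation at level $M$, combine Young's inequality with the pointwise bound $e^{a+b}\le\frac12(e^{2a}+e^{2b})$, handle the unbounded piece $v^M$ by choosing $M$ so large that $2\alpha(1+\epsi)\|\nabla_{g_\psi}v^M\|^2\le4\pi$ (possible since the tail of the Dirichlet energy vanishes as $M\to\infty$) and then invoking the uniform bound of Theorem~\ref{teo:conformi}, and handle the bounded piece via $|T_Mv|\le M$ and $\int_{B_1}v^2\,dV_{g_\psi}=\int_\Omega u^2\,dx<+\infty$. Your route dispenses entirely with hyperbolic symmetrization and Lemma~\ref{lemma:radialip}, at the cost of using Theorem~\ref{teo:conformi}'s uniform bound as a black box; the paper's route is more self-contained on the radial side but quietly relies on the same truncation idea when it cites ``the integrability of $e^{\alpha v^2}$ on any bounded Euclidean sets.'' Both are correct, and your version is arguably the more modular of the two.
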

\begin{proof}$\dd{}$\\
We apply the conformal diffeomorphism with the unit disc $\dd{\eqref{eq:diffeo}}$; since the metric $\dd{g_\psi}$ generated by this map is bounded from above by the hyperbolic one, it suffices to verify
$$\int_{B_1(0)}\left(e^{\alpha v^2(y)}-1\right)dV_{g_h}(y)<+\infty\quad\forall\;v\in H^1_0(B_1(0),g_h).$$
Moreover, applying hyperbolic symmetrization, we can restrict our proof to nonnegative radially decreasing functions (see \cite{mansand} for details), for which Lemma $\dd{\ref{lemma:radialip}}$ holds; therefore, keeping in mind estimate $\dd{\eqref{eq:stimaesp}}$ and the integrability of $\dd{e^{\alpha v^2}}$ on any bounded Euclidean sets, we get
$$\int_{B_1(0)}\left(e^{\alpha v^2(y)}-1\right)dV_{g_h}(y)\le$$
$$\le\int_{B_\frac{1}2(0)}e^{\alpha v^2(y)}dV_{g_h}(y)+\int_{B_1(0)\backslash B_\frac{1}2(0)}\left(e^{\alpha v^2(y)}-1\right)dV_{g_h}(y)\le$$
$$\le\frac{64}9\int_{B_\frac{1}2(0)}e^{\alpha v^2(y)}dV_{g_e}(y)+\int_{B_1(0)\backslash B_\frac{1}2(0)}v^2(y)e^{\alpha v^2(y)}dV_{g_h}(y)\le$$
$$\le C(v)+\int_{B_1(0)\backslash B_\frac{1}2(0)}\frac{v^2(y)}{|y|^{\frac{\alpha}\pi\|\nabla_{g_h}v\|_{L^2(B_1(0),g_h)}^2}}dV_{g_h}(y)\le$$
$$\le C(v)+C_1(v)\int_{B_1(0)}v^2(y)dV_{g_h}(y)\le C_2(v)$$
that is what we claimed.
\end{proof}
Some of the calculations used in this proof will be inspired by a work of Gi. Mancini and Sandeep \cite{mansand2}, who studied existence of extremals for the Moser-Trudinger inequality on the hyperbolic disc.
\begin{proof}[Proof of step $\dd{1}$]$\dd{}$\\
The first case that will be considered is when the convergence is strong in $\dd{L^2(\Omega)}$; under this hypothesis, it will be shown that $\dd{u}$ itself attains the supremum, that is
$$\lim_{k\to+\infty}\int_\Omega\left(e^{4\pi u_k(x)^2}-1\right)dx=\int_\Omega\left(e^{4\pi u(x)^2}-1\right)dx.$$
The modified concentration-compactness Theorem $\dd{\ref{teo:concomp2}}$ gives, for any $\dd{\Omega_R}$ defined as in Corollary $\dd{\ref{cor:stimast2}}$,
$$\lim_{k\to+\infty}\int_{\Omega_R}\left(e^{4\pi u_k(x)^2}-1\right)dx=\int_{\Omega_R}\left(e^{4\pi u(x)^2}-1\right)dx$$
whereas, near the infinity, one uses strong convergence for the quadratic term and Corollary $\dd{\ref{cor:stimastr}}$ for the rest:
$$\int_{\Omega\backslash\Omega_R}\left(e^{4\pi u_k(x)^2}-1\right)dx=$$
$$=\int_{\Omega\backslash\Omega_R}\left(e^{4\pi u_k(x)^2}-1-4\pi u_k^2(x)\right)dx+4\pi\int_{\Omega\backslash\Omega_R}u_k^2(x)dx\underset{k\to+\infty}\to$$
$$\underset{k\to+\infty}\to\int_{\Omega\backslash\Omega_R}\left(e^{4\pi u(x)^2}-1-4\pi u^2(x)\right)dx+4\pi\int_{\Omega\backslash\Omega_R}u^2(x)dx=$$
$$\int_{\Omega\backslash\Omega_R}\left(e^{4\pi u(x)^2}-1\right)dx.$$
If instead the convergence is just weak, a little more calculations are required; in this case, it holds
$$1-\sigma:=\int_\Omega|\nabla u(x)|^2dx\in(0,1),$$
since $\dd{\sigma=1}$ would mean that the weak limit is null and $\dd{\sigma=0}$ would give strong convergence in $\dd{H^1_0(\Omega)}$ and so, by Poincaré's inequality, also in $\dd{L^2(\Omega)}$; moreover, weak convergence gives
$$\sigma_k:=\int_\Omega|\nabla(u_k(x)-u(x))|^2dx=$$
$$=\int_\Omega|\nabla u_k(x)|^2dx-2\int_\Omega\langle\nabla u_k(x),\nabla u(x)\rangle dx+\int_\Omega|\nabla u(x)|^2dx$$
and so
$$\sigma_k\underset{k\to+\infty}\to\sigma\in(0,1).$$
Convexity of $\dd{f(t)=e^{4\pi t}-1}$ gives
$$\int_\Omega\left(e^{4\pi u_k(x)^2}-1\right)dx=$$
$$=\int_\Omega\left(e^{4\pi\left(\sigma_k\left(\frac{u_k(x)-u(x)}{\sqrt{\sigma_k}}\right)^2+(1-\sigma_k)\frac{2u(x)(u_k(x)-u(x))+u^2(x)}{1-\sigma_k}\right)}-1\right)dx\le$$
$$\le\sigma_k\int_\Omega\left(e^{4\pi\left(\frac{u_k(x)-u(x)}{\sqrt{\sigma_k}}\right)^2}-1\right)dx+$$
$$+(1-\sigma_k)\int_\Omega\left(e^{\frac{8\pi u(x)(u_k(x)-u(x))+4\pi u^2(x)}{1-\sigma_k}}-1\right)dx\le$$
$$\le\sigma_kS+(1-\sigma_k)\int_\Omega\left(e^{\frac{8\pi u(x)(u_k(x)-u(x))+4\pi u^2(x)}{1-\sigma_k}}-1\right)dx.$$
Passing to the limit for $\dd{k\to+\infty}$, the last inequality becomes
$$S\le\sigma S+(1-\sigma)\limsup_{k\to+\infty}\int_\Omega\left(e^{\frac{8\pi u(x)(u_k(x)-u(x))+4\pi u^2(x)}{1-\sigma_k}}-1\right)dx.$$
So, if we could pass to the limit inside the integral, we would get
\begin{equation}
\label{eq:esse}
S\le\sigma S+(1-\sigma)\int_\Omega\left(e^{4\pi\frac{u^2(x)}{1-\sigma}}-1\right)dx\le\sigma S+(1-\sigma)S=S,
\end{equation}
hence every inequality would have to be actually an equality, and thus $\dd{\frac{u}{\sqrt{1-\sigma}}}$ would be an extremal function.\\
Now we prove that one can actually take the limit inside the integral, studying separately what happens inside and outside $\dd{\Omega_R}$. In the rectangles, we find boundedness in $\dd{L^2}$ thanks to Lemma $\dd{\ref{lemma:mtsopra}}$, hence we can pass to the limit by applying Vitali's convergence theorem:
$$\int_{\Omega_R}\left(e^{\frac{8\pi u(x)(u_k(x)-u(x))+4\pi u^2(x)}{1-\sigma_k}}-1\right)^2dx\le$$
$$\le\int_{\Omega_R}e^\frac{16\pi u(x)|u_k(x)-u(x)|}{1-\sigma_k}e^\frac{8\pi u^2(x)}{1-\sigma_k}dx+4R\le$$
$$\le\left(\int_{\Omega_R}e^\frac{32\pi u(x)|u_k(x)-u(x)|}{1-\sigma_k}dx\right)^\frac{1}2\left(\int_{\Omega_R}e^\frac{16\pi u^2(x)}{1-\sigma_k}dx\right)^\frac{1}2+4R\le$$
$$\le\left(\int_{\Omega_R}e^{\frac{16\pi}{1-\sigma_k}\left(\frac{u(x)^2}\epsi+\epsi(u_k(x)-u(x))^2\right)}dx\right)^\frac{1}2C(u,R)+4R\le$$
$$\le\left(\int_{\Omega_R}e^{\frac{32\pi}{\epsi(1-\sigma_k)}u(x)^2}dx\right)^\frac{1}4\left(\int_{\Omega_R}e^{\frac{32\epsi\pi}{1-\sigma_k}(u_k(x)-u(x))^2}dx\right)^\frac{1}4C(u,R)+4R\le$$
$$\le\left(\int_{\Omega_R}e^{\frac{32\epsi\pi}{1-\sigma_k}(u_k(x)-u(x))^2}dx\right)^\frac{1}4C_1(u,R)+4R$$
which is uniformly bounded if one takes
$$\epsi<\frac{1-\sigma}{8\sigma}.$$
On the other hand, the integral on $\dd{\Omega\backslash\Omega_R}$ is small for large $\dd{R}$: the estimate $\dd{\eqref{eq:stimaesp}}$ and the uniform boundedness of $\dd{8\pi u(u_k-u)+4\pi u^2}$, which follows from estimate $\dd{\eqref{eq:stimastr}}$, give
\begin{equation}
\label{eq:omegar}
\left|\int_{\Omega\backslash\Omega_R}\left(e^{\frac{8\pi u(x)(u_k(x)-u(x))+4\pi u^2(x)}{1-\sigma_k}}-1\right)dx\right|\le
\end{equation}
$$\le\int_{\Omega\backslash\Omega_R}\max\left\{e^{\frac{8\pi u(x)(u_k(x)-u(x))+4\pi u^2(x)}{1-\sigma_k}},1\right\}\left|\frac{8\pi u(x)(u_k(x)-u(x))+4\pi u^2(x)}{1-\sigma_k}\right|dx\le$$
$$\le C(u)\int_{\Omega\backslash\Omega_R}\left(2u(x)|u_k(x)-u(x)|+u^2(x)\right)dx\le$$
$$\le 2C(u)\left(\int_\Omega(u(x)-u_k(x))^2dx\right)^\frac{1}2\left(\int_{\Omega\backslash\Omega_R}u^2(x)dx\right)^\frac{1}2+C(u)\int_{\Omega\backslash\Omega_R}u^2(x)dx\le$$
$$\frac{2C(u)}{\sqrt{\lambda_1(\Omega)}}\left(\int_{\Omega\backslash\Omega_R}u^2(x)dx\right)^\frac{1}2+\int_{\Omega\backslash\Omega_R}u^2(x)dx\le C(u)$$
which goes to $\dd{0}$ as $\dd{R}$ goes to $\dd{+\infty}$; therefore, given $\dd{\epsi>0}$, one can find $\dd{R}$ such that $\dd{\eqref{eq:omegar}}$ is smaller than $\dd{\epsi}$ and, using convergence in $\dd{L^1(\Omega_R)}$, we get
$$S\le\sigma S+(1-\sigma)\left(\int_{\Omega_R}\left(e^{4\pi\frac{u^2(x)}{1-\sigma}}-1\right)+\epsi\right)dx\le\sigma S+(1-\sigma)(S+\epsi)$$
which is, being $\dd{\epsi}$ arbitrary, $\dd{\eqref{eq:esse}}$.
\end{proof}
\begin{proof}[Proof of step $\dd{2}$]$\dd{}$\\
If $\dd{\theta=1}$, then for any $\dd{\epsi>0}$ it holds
$$S=\lim_{k\to+\infty}\int_{B_\epsi(0)}\left(e^{4\pi u_k(x)^2}-1\right)dx=$$
$$=\lim_{k\to+\infty}\int_{\psi^{-1}(B_\epsi(0))}\left(e^{4\pi(u_k\circ\psi)(y)^2}-1\right)|\det\de_y\psi(y)|dy\le$$
$$\le\sup_{\psi^{-1}(B_\epsi(0))}|\det\de_y\psi(y)|\lim_{k\to+\infty}\int_{B_1(0)}\left(e^{4\pi(u_k\circ\psi)(y)^2}-1\right)dy\le$$
$$\le\sup_{\psi^{-1}(B_\epsi(0))}|\det\de_y\psi(y)|\sup_{\int_{B_1(0)}|\nabla v(x)|^2dx\le1}\int_{B_1(0)}\left(e^{4\pi v^2(y)}-1\right)dy.$$
Passing to the limit for $\dd{\epsi\to0}$, $\dd{\psi^{-1}(B_\epsi(0))}$ shrinks around $\dd{0}$, so
$$S\le|\det\de_y\psi(0)|\sup_{\int_{B_1(0)}|\nabla v(x)|^2dx\le1}\int_{B_1(0)}\left(e^{4\pi v^2(y)}-1\right)dy.$$
Since $\dd{\psi}$ is a (nonlinear) conformal diffeomorphism, $\dd{|\det\de_y\psi|}$ is (strictly) subharmonic, hence one can apply the (strict) mean value inequality; thus, if
$$\ttilde u(x)=\ttilde U(|x|)$$
is a radial extremal for the Moser-Trudinger inequality on the unit disc,
$$S\le|\det\de_y\psi(0)|\int_{B_1(0)}\left(e^{4\pi{\ttilde u(y)}^2}-1\right)dy=$$
$$=2\pi|\det\de_y\psi(0)|\int_0^1\left(e^{4\pi{\ttilde U(\rho)}^2}-1\right)\rho d\rho<$$
$$<\int_0^1\left(e^{4\pi{\ttilde U(\rho)}^2}-1\right)\rho d\rho\int_{-\pi}^\pi|\det\de_y\psi(\rho\cos\theta,\rho\sin\theta)|d\theta=$$
$$=\int_{B_1(0)}\left(e^{4\pi\ttilde u(y)^2}-1\right)dV_{g_\psi}(y)=\int_\Omega\left(e^{4\pi\left(\ttilde u\circ\psi^{-1}\right)(x)^2}-1\right)dx\le S,$$
which is a contradiction.
\end{proof}
\begin{oss}$\dd{}$\\
The proof of step $\dd{2}$ closely follows the estimates in \cite{flucher} for the concentration level in the case of planar simply connected domain; moreover, we did not use any of the symmetry properties of $\dd{\Omega}$, which are instead crucial in most of the rest of the proof of Theorem $\dd{\ref{teo:striscia}}$: actually, the proof of this step can be reproduced for any domain which is conformally equivalent to the ball and where the Moser-Trudinger inequality holds.\\
This can be intuitively explained by the fact that concentration is a local property, hence it does not depend on the shape of the domain.
\end{oss}
\begin{proof}[Proof of step $\dd{3}$]$\dd{}$\\
Fixed $\dd{\epsi,R>0}$, one takes a test function $\dd{\pphi\in C^1_0(\Omega)}$ such that
$$\left\{\begin{array}{ll}0\le\pphi\le1\\\pphi\equiv1&\text{on }B_{\epsi}(0)\\\pphi\equiv0&\text{on }\Omega\backslash\Omega_R\end{array}\right.;$$
then one has
$$\theta S=\lim_{k\to+\infty}\int_{B_\epsi(0)}\left(e^{4\pi u_k(x)^2}-1\right)dx=$$
\begin{equation}
\label{eq:tetas}
=\lim_{k\to+\infty}\int_{B_\epsi(0)}\left(e^{4\pi(u_k(x)\pphi(x))^2}-1\right)dx
\end{equation}
and
$$(1-\theta)S=\lim_{k\to+\infty}\int_{\Omega\backslash\Omega_R}\left(e^{4\pi u_k(x)^2}-1\right)dx=$$
$$=\lim_{k\to\infty}\int_{\Omega\backslash\Omega_R}\left(e^{4\pi(u_k(x)(1-\pphi(x)))^2}-1\right)dx.$$
Moreover,
$$1=\int_\Omega|\nabla u_k(x)|^2dx=\int_\Omega|\nabla(u_k(x)\pphi(x))|^2dx+\int_\Omega|\nabla(u_k(x)(1-\pphi(x)))|^2+$$
$$+2\int_\Omega\langle\nabla(u_k(x)\pphi(x)),\nabla(u_k(x)(1-\pphi(x)))\rangle dx$$
and strong convergence in $\dd{L^2_{\loc}(\Omega)}$ yields
$$\int_\Omega\langle\nabla(u_k(x)\pphi(x)),\nabla(u_k(x)(1-\pphi(x)))\rangle dx=$$
$$=\int_{\Omega_R\backslash B_\epsi(0)}\langle(\pphi(x)\nabla u_k(x)+u_k(x)\nabla\pphi(x)),((1-\pphi(x))\nabla u_k(x)-u_k(x)\nabla\pphi(x))\rangle dx=$$
$$=\int_{\Omega_R\backslash B_\epsi(0)}\left(\pphi(x)(1-\pphi(x))|\nabla u_k(x)|^2+u_k(x)(1-\pphi(x))\langle\nabla u_k(x),\nabla\pphi(x)\rangle-\right.$$
$$\left.-u_k(x)\pphi(x)\langle\nabla u_k(x),\nabla\pphi(x)\rangle-u_k(x)^2|\nabla\pphi(x)|^2\right)dx=$$
$$=\int_{\Omega_R\backslash B_\epsi(0)}\pphi(x)(1-\pphi(x))|\nabla u_k(x)|^2dx+o(1)\ge o(1),$$
hence
$$\int_\Omega|\nabla(u_k(x)(1-\pphi(x)))|^2dx+\int_\Omega|\nabla(u_k(x)\pphi(x))|^2\le 1.$$
If it were
$$\int_\Omega|\nabla(u_k(x)(1-\pphi(x)))|^2dx\underset{k\to+\infty}\to0$$
then Poincaré's inequality would give 
$$u_k(1-\pphi)\underset{k\to+\infty}\to0\quad\text{in }L^2(\Omega),$$
and applying Corollary $\dd{\ref{cor:stimastr}}$ to $\dd{u_k(1-\pphi)}$, we find
$$\lim_{k\to+\infty}\int_{\Omega\backslash\Omega_R}\left(e^{4\pi(u_k(x)(1-\pphi(x)))^2}-1\right)dx=$$
$$=\lim_{k\to+\infty}\int_{\Omega\backslash\Omega_R}\left(e^{4\pi(u_k(x)(1-\pphi(x)))^2}-1-4\pi(u_k(x)(1-\pphi(x)))^2\right)dx+$$
$$+4\pi\lim_{k\to+\infty}\int_{\Omega\backslash\Omega_R}(u_k(x)(1-\pphi(x)))^2dx=0$$
that is $\dd{\theta=1}$.\\
Otherwise it must be
$$\tau:=\limsup_{k\to+\infty}\int_\Omega|\nabla(u_k(x)\pphi(x))|^2dx<1$$
but then one might apply Vitali's theorem since, for $\dd{p\in\left(1,\frac{1}\tau\right)}$, one has
$$\int_{B_\epsi(0)}\left(e^{4\pi(u_k(x)\pphi(x))^2}-1\right)^pdx\le$$
$$\int_{B_\epsi(0)}e^{4p\pi\left\|\nabla(u_k\pphi)\right\|_{L^2(\Omega)}^2\left(\frac{u_k(x)\pphi(x)}{\left\|\nabla(u_k\pphi)\right\|_{L^2(\Omega)}}\right)^2}dx\le S,$$
so one gets
$$\lim_{k\to+\infty}\int_{B_\epsi(0)}\left(e^{4\pi(u_k(x)\pphi(x))^2}-1\right)dx=0$$
hence, from $\dd{\eqref{eq:tetas}}$ it must be $\dd{\theta=0}$.
\end{proof}
\begin{proof}[Proof of step $\dd{4}$]$\dd{}$\\
It is a straightforward consequence of Corollary $\dd{\ref{cor:stimastr}}$:
$$S=\lim_{k\to+\infty}\int_{\Omega\backslash\Omega_R}\left(e^{4\pi u_k(x)^2}-1\right)dx=$$
$$=\lim_{k\to+\infty}\int_{\Omega\backslash\Omega_R}\left(e^{4\pi u_k(x)^2}-1-4\pi u_k(x)^2\right)dx+$$
$$+4\pi\lim_{k\to+\infty}\int_{\Omega\backslash\Omega_R}u_k(x)^2dx=4\pi\lim_{k\to+\infty}\int_{\Omega\backslash\Omega_R}u_k(x)^2dx\le\frac{4\pi}{\lambda_1(\Omega)}.$$
\end{proof}
\begin{proof}[Proof of step $\dd{5}$]$\dd{}$\\
An explicit calculation, by separation of variables, of the first eigenvalues and eigenfunctions of $\dd{-\Delta}$ on $\dd{\Omega_k}$ shows that $\dd{\lambda_1(\Omega)=\frac{\pi^2}4}$, so it suffices to show that the functional can assume values strictly larger than $\dd{\frac{16}\pi}$.\\
The first eigenfunctions on $\dd{\Omega_k}$ are respectively
$$\pphi_k(x_1,x_2)=\left\{\begin{array}{ll}\cos\left(\frac{\pi}{2k}x_1\right)\cos\left(\frac{\pi}2x_2\right)&\text{if }|x_1|\le k\\0&\text{if }|x_1|>k\end{array}\right.,$$
so
$$u_k:=\frac{\pphi_k}{\|\nabla\pphi_k\|_{L^2(\Omega)}}$$
is a vanishing sequence and the evaluation of the Moser-Trudinger functional on $\dd{u_k}$ tends to the critical value $\dd{\frac{16}\pi}$, as can be seen by the proof of step $\dd{5}$; we will now show that this value is reached from above.\\
It holds
$$\int_\Omega\pphi_k(x_1,x_2)^2dx_1dx_2=\int_{-k}^k\cos\left(\frac{\pi}{2k}x_1\right)^2dx_1\int_{-1}^1\cos\left(\frac{\pi}2x_2\right)^2dx_2=k$$
and
$$\|\nabla\pphi_k\|_{L^2(\Omega)}^2=$$
$$=\int_{-k}^kdx_1\int_{-1}^1dx_2\left|\left(-\frac{\pi}{2k}\sin\left(\frac{\pi}{2k}x_1\right)\cos\left(\frac{\pi}2x_2\right),-\frac{\pi}2\cos\left(\frac{\pi}{2k}x_1\right)\sin\left(\frac{\pi}2x_2\right)\right)\right|^2=$$
$$=\frac{\pi^2}{4k^2}\int_{-k}^k\sin\left(\frac{\pi}{2k}x_1\right)^2dx_1\int_{-1}^1\cos\left(\frac{\pi}2x_2\right)^2dx_2+$$
$$+\frac{\pi^2}4\int_{-k}^k\cos\left(\frac{\pi}{2k}x_1\right)^2dx_1\int_{-1}^1\sin\left(\frac{\pi}2x_2\right)^2dx_2=\frac{\pi^2}4\frac{k^2+1}k,$$
hence, by Jensen's inequality,
$$\int_\Omega\left(e^{4\pi u_k(x)^2}-1\right)dx=\int_{\Omega_k}\left(e^{4\pi\left(\frac{\pphi_k(x)}{\|\nabla\pphi_k\|_{L^2(\Omega)}}\right)^2}-1\right)dx\ge$$
$$\ge|\Omega_k|\left(e^{4\pi\frac{\int_{\Omega_k}\pphi_k(x)^2dx}{|\Omega_k|\|\nabla\pphi_k\|_{L^2(\Omega)}^2}}-1\right)=4k\left(e^{\frac{4}\pi\frac{k}{k^2+1}}-1\right)$$
that goes to $\dd{\frac{16}\pi}$ from above; therefore, the vanishing value is exceeded by eigenfunctions of large rectangles.
\end{proof}
Some of the results obtained can be extended to other simply connected domains which satisfy some hypotheses; the double symmetrization can be made in any domain $\dd{\Omega}$ which coincides with its double symmetrized, for instance if
$$\Omega_f=\{(x_1,x_2)\in\mb R^2:|x_2|<f(|x_1|)\}$$
with $\dd{f\in C^1((0,+\infty),(0,+\infty))}$ nonincreasing (and actually the strip we considered in Theorem $\dd{\ref{teo:striscia}}$ is a domain of this kind with $\dd{f\equiv1}$). Probably something similar can be done if some other kind of symmetry holds, but it is not known if this implies some estimates like the ones in Lemma $\dd{\ref{lemma:stimastr}}$, which were essential in most of the proof.\\
Another difficulty is that the precise value of $\dd{\lambda_1(\Omega)}$, that was needed to find functions above the vanishing level, is generally not known except in very special cases; this problem might be bypassed if a first eigenfunction $\dd{\pphi}$ exists for the laplacian, since in this case
$$\int_\Omega\left(e^{4\pi\left(\frac{\pphi(x)}{\|\nabla\pphi\|_{L^2(\Omega)}}\right)^2}-1\right)dx>4\pi\int_\Omega\frac{\pphi^2(x)}{\|\nabla\pphi\|_{L^2(\Omega)}^2}dx=\frac{4\pi}{\lambda_1(\Omega)}.$$
However, this condition is difficult to be verified, because the embedding of $\dd{L^2(\Omega)}$ in $\dd{H^1_0(\Omega)}$ is generally not compact, even if $\dd{\Omega}$ has finite measure (see for instance \cite{adamslib}).\\
Finally, extenstions to strips in higher dimension seem difficult as well, because one has to deal with the $\dd{N}$-laplacian $\dd{\Delta_Nu=\dive(|\nabla u|^{N-2}\nabla u)}$ and much less is known about its spectral properties than for the usual Laplace operator; moreover, the domains which are conformally equivalent to the ball are much less than in the planar case.\\

\section*{Acknowledgements}
We would like to thank our Master's Degree supervisor Professor Gianni Mancini, who led us to the discovery of the these results during the preparation of our theses and encouraged us to write this paper.\\
We also thank Professor Andrea Malchiodi who kindly offered to check our work and corrected several mistakes.

\bibliography{mydatabase}
\bibliographystyle{alpha}

\end{document}